\newtheorem{dfe}{Definition}
[section]
\newtheorem{lem}[dfe]{Lemma}
\newtheorem{theo}[dfe]{Theorem}
\newtheorem{ex}[dfe]{Example}
\newtheorem{pro}[dfe]{Proposition}
\newtheorem{Rem}{Remark}
\newcommand{\C}{\mathbb{C}}
\newcommand{\Z}{\mathbb{Z}}
\newcommand{\N}{\mathbb{N}}
\newcommand{\F}{\mathbb{F}}
\newcommand{\Span}{\mathop{\mathrm{Span}}\nolimits}
\newcommand{\dom}{\mathop{\mathrm{dom}}\nolimits}
\newcommand{\type}{\mathop{\mathrm{type}}\nolimits}
\newcommand{\zerovec}{o}
\newcommand{\MD}{\mathrm{multideg}}
\newcommand{\LT}{\mathrm{LT}}
\author{
    Eiichi Bannai\thanks{Faculty of Mathematics, Kyushu University (emeritus), Japan},
    Hirotake Kurihara\thanks{Department of Applied Science, Yamaguchi University, 2-16-1 Tokiwadai, Ube 755-8611, Japan}, 
    Da Zhao\thanks{Department of Applied Mathematics and Physics, Graduate School of
    Informatics, Kyoto University, Sakyo-Ku, Kyoto, 606 8501, Japan}, 
    Yan Zhu\thanks{College of Science, University of Shanghai for Science and Technology, Shanghai
    200093, China}}
\begin{document}
\maketitle

\begin{abstract}
The classification problem of $P$- and $Q$-polynomial association schemes has been one of the central problems in algebraic combinatorics. 
Generalizing the concept of $P$- and $Q$-polynomial association schemes to multivariate cases, namely to consider higher rank $P$- and $Q$-polynomial association schemes, has been tried by some authors, but it seems that so far there were neither very well-established definitions nor results. 
Very recently,  Bernard, Cramp\'{e}, d'Andecy, Vinet, and Zaimi~\cite{bi}, defined bivariate $P$-polynomial association schemes, as well as bivariate $Q$-polynomial association schemes.  
In this paper, we study these concepts and propose a new modified definition concerning a general monomial order, which is more general and more natural and also easy to handle. 
We prove that there are many interesting families of examples of multivariate $P$- and/or $Q$-polynomial association schemes. 
\end{abstract}
\textbf{Key words}: multivariate polynomial association schemes;
monomial orders;
Gr\"obner bases;
Gelfand pairs.

\tableofcontents

\section{Introduction}
\label{sec:intro}

The classification problem of $P$- and $Q$-polynomial 
association schemes has been one of the most 
important problems in algebraic combinatorics. A historical survey on this problem 
can be seen in Chapter 6 of Bannai-Bannai-Ito-Tanaka~\cite{BBIT2021} in particular. Also, Leonard~\cite{Leonard1982}, Bannai-Ito~\cite{BI1984}, Brouwer-Cohen-Neumaier~\cite{BCN1989}, Terwilliger~\cite{Terwilliger2001,Terwilliger2021}, etc., are basic references.
There have been many attempts to consider 
higher rank $P$- and $Q$-polynomial association schemes 
as well as higher rank (i.e., multivariate versions of) Askey-Wilson orthogonal polynomials.
However, until recently, there was no precise framework to define higher rank $P$- and/or $Q$-polynomial association schemes, despite many efforts to obtain multivariate Askey-Wilson orthogonal polynomials, especially at the level of orthogonal polynomials. 
Relevant works on this subject include 
Mizukawa~\cite{Mizukawa2004}, 
Mizukawa-Tanaka~\cite{MT2004}, 
Gasper-Rahman~\cite{GR2007}, 
Scarabotti~\cite{Scarabotti2011},
Iliev-Terwilliger~\cite{IT2012},
and several other papers cited in 
Bernard-Cramp\'{e}-d'Andecy-Vinet-Zaimi~\cite{bi}.
On the other hand, at the level of association schemes,   
the recent pioneering paper of Bernard et al.~\cite{bi} introduced a new very interesting framework of bivariate $P$-polynomial (and also $Q$-polynomial) association 
schemes, and did show that there are many 
interesting explicit examples of bivariate $P$-polynomial association schemes.
The present paper by the authors was strongly motivated by the paper \cite{bi}. 

The present paper has the following 
two main purposes.

\begin{enumerate}[label=$(\roman*)$]
\item We will look at the concept of multivariate 
$P$-polynomial (and $Q$-polynomial) association 
schemes and will give a more general definition 
of them, modifying the original definitions of Bernard et al.
~\cite{bi}. 
We will avoid the use of $(\alpha,\beta)$ for the type of bivariate $P$-polynomial association schemes. Which we think will make the situation less technical and more transparent.
Also, the generalization to multivariate $(>2)$ cases becomes immediate. 
We believe that this new definition is more natural and general overall.
Furthermore, there are expected to be many more examples of multivariate $P$-polynomial (and $Q$-polynomial) association 
schemes in this new sense. 

\item We will discuss many such new explicit examples of 
multivariate $P$-polynomial (and $Q$-polynomial) association 
schemes. (We hope to discuss further 
such examples in a subsequent paper.) 
\end{enumerate}

Now we will explain more details of the contents of 
this paper. 
In Section~\ref{sec:2}, we review the work of 
Bernard et al.~\cite{bi} on bivariate $P$-polynomial (and $Q$-polynomial) association 
schemes and then propose our new 
revised definitions (Definitions~\ref{df:abPpoly} and \ref{df:abQpoly}). 
Thanks to the new definition,
it can be shown that the Bose-Mesner algebra of a multivariate polynomial association scheme
is isomorphic to $\C[x_1,x_2,\ldots ,x_\ell]/I$
using a certain ideal of $\C[x_1,x_2,\ldots ,x_\ell]$.
This will be discussed in more detail in Proposition~\ref{prop:A=C[x]/I}.
We also prove 
that the important properties, namely 
Proposition 2.4, Lemma 2.5 and Proposition 2.6  
in \cite{bi} also hold for our new definition, namely 
Proposition~\ref{prop:vij_unique}, Lemma~\ref{lem:recur}, Proposition~\ref{prop:P-TFAE}, respectively. 
In Section~\ref{sec:3}, we will explain explicit examples
of bivariate $P$-polynomial association schemes in our new definition 
but do not satisfy the exact condition in the 
definition of \cite{bi}.
Furthermore, we give a new example of a bivariate $Q$-polynomial association scheme.
In Section~\ref{sec:Examplesmultpoly}, we consider more examples which are 
essentially multivariate $P$-polynomial and/or $Q$-polynomial 
association schemes.
Here, our examples are based on the 
examples of Gelfand pairs described in 
the paper of Ceccherini-Silberstein, Scarabotti and Tolli~\cite{CST2006}.
More specifically, we obtain Theorem~\ref{thm:GHS} on 
the extensions of association schemes
(or the generalized Hamming schemes in the sense 
of \cite{CST2006}, namely $F \wr \mathfrak{S}_n/H \wr \mathfrak{S}_n$). 
Also, we obtain Theorem~\ref{thm:GJS} on the generalized Johnson association 
schemes $F \wr \mathfrak{S}_n/(H \wr \mathfrak{S}_h\times F \wr \mathfrak{S}_{n-h})$ in the sense of \cite{CST2006}.
In Section~\ref{sec:Furthercomments}, we will give further comments and 
speculations mostly 
without proof, hoping that more details will be 
discussed in a subsequent paper.

\section{Definition of multivariate $P$-polynomial and/or $Q$-polynomial 
association schemes}
\label{sec:2}

\subsection{Association schemes}
\label{sec:AS}
In this subsection, 
we begin by recalling the concept of association schemes.
The reader is referred to Bannai-Bannai-Ito-Tanaka~\cite{BBIT2021} and Bannai-Ito~\cite{BI1984} for details.
Let $X$ and $\mathcal{I}$ be finite sets
and let $\mathcal{R}$ be a surjective map from $X\times X$ to $\mathcal{I}$.
For $i\in \mathcal{I}$, we put $R_i=\mathcal{R}^{-1}(i)$,
i.e., $R_i=\{(x,y)\in X\times X\mid \mathcal{R}(x,y)=i\}$.
Let $M_X(\C)$ be the $\C$-algebra of complex matrices with rows and columns indexed by $X$.
The \emph{adjacency matrix} $A_i$ of $i\in \mathcal{I}$
is defined to be the matrix in $M_X(\C)$ whose $(x,y)$ entries are
\[
(A_i)_{xy}=
\begin{cases}
    1 & \text{if $\mathcal{R}(x,y)=i$,}\\   
    0 & \text{otherwise.}
\end{cases}    
\]
It is obvious that
\begin{enumerate}[label=$(\mathrm{A}\arabic*)$]
    \item $\sum_{i\in \mathcal{I}} A_i =J_X$, where $J_X$ is the all-ones matrix of $M_X(\C)$. \label{AS:Hadamard}
\end{enumerate}
A pair $\mathfrak{X}=(X,\{R_i\}_{i\in \mathcal{I}})$ (or simply $(X,\mathcal{R})$)
is called a \emph{commutative association scheme} if
$\mathfrak{X}$ satisfies the following conditions:
\begin{enumerate}[label=$(\mathrm{A}\arabic*)$]
    \setcounter{enumi}{1}
    \item there exists $i_0 \in \mathcal{I}$ such that $A_{i_0}=I_X$, where $I_X$ is the identity matrix of $M_X(\C)$; \label{AS:I}
    \item for each $i\in \mathcal{I}$, there exists $i'\in \mathcal{I}$ such that $A_i^T=A_{i'}$, where $A_i^T$ denotes the transpose of $A_i$; \label{AS:transpose}
    \item for each $i,j\in \mathcal{I}$,
    \[
    A_i A_j = \sum_{k\in \mathcal{I}} p^k_{ij} A_k   
    \]
    holds.
    The constant $p^k_{ij}$ is called the \emph{intersection number}; \label{AS:pijk}
    \item for $i,j,k\in \mathcal{I}$, $p^k_{ij}=p^k_{ji}$ holds, i.e., $A_i A_j = A_j A_i$ holds.\label{AS:commutative}
\end{enumerate}
If the cardinality $|\mathcal{I}|$ of $\mathcal{I}$ is equal to $d+1$,
then $\mathfrak{X}$ is called of class $d$.
Moreover, if an association scheme $\mathfrak{X}=(X,\{R_i\}_{i\in \mathcal{I}})$ satisfies 
\begin{enumerate}[label=$(\mathrm{A}\arabic*)$]
    \setcounter{enumi}{5}
\item for each $i\in \mathcal{I}$, $i=i'$ holds, \label{AS:symmetric}
\end{enumerate}
then $\mathfrak{X}$ is called \emph{symmetric}.

We also use the notation $\mathfrak{X}=(X,\{A_i\}_{i\in \mathcal{I}})$
to denote association schemes with the adjacency matrices $\{A_i\}_{i\in \mathcal{I}}$.

\begin{ex}
\label{ex:Gelfandpair}
Let $G$ be a finite group acting transitively on a finite set $X$.
For a fixed element $x_0\in X$, Let $K$ be the stabilizer of $x_0$.
It is known that $X$ can be regarded as the coset space $G/K$
and the space $L(X)=\{f\colon X\to \C\}$ of $\C$-valued functions on $X$
is a $G$-space with the action $g f(x):=f(g^{-1}x)$ for $g\in G$ and $f\in L(X)$.
The pair $(G,K)$ is called a \emph{Gelfand pair}
if the decomposition $L(X)=\bigoplus^d_{j=0}V_j$ into irreducible $G$-modules is multiplicity-free. 
For a Gelfand pair $(G,K)$,
let $X=G/K$, $\mathcal{I}=K\backslash G /K$
and $\mathcal{R}\colon X\times X \to \mathcal{I}$ by $\mathcal{R}(g_1K, g_2K)=K g_1^{-1} g_2 K$.
Then $\mathfrak{X}=(X,\mathcal{R})$ becomes a commutative association scheme.

The $K$-orbit decomposition of $X$ corresponds to $\mathcal{I}$
because $x,y\in X$ are in the same $K$-orbit if and only if $\mathcal{R}(x_0,x)=\mathcal{R}(x_0,y)$.
Thus, we denote the decomposition
$X=\bigsqcup _{i\in \mathcal{I}} \Lambda_i$ 
of the $K$-orbits of $X$, and
we have 
\begin{equation}
    \label{eq:lamda=R}
    R_i= \{(g x_0,g x)\mid x\in \Lambda_i,\  g\in G\}.
\end{equation}
The right-hand side of \eqref{eq:lamda=R} is denoted by $\widetilde{\Lambda_i}$.
\end{ex}

Let $\mathfrak{A}=\Span_\C \{ A_i \}_{i\in \mathcal{I}}$.
By \ref{AS:I} and \ref{AS:pijk}, $\mathfrak{A}$ becomes a subalgebra of $M_X(\C)$.
The algebra $\mathfrak{A}$ is called the \emph{Bose-Mesner algebra} of $\mathfrak{X}$.
By \ref{AS:Hadamard}, $\{A_i\}_{i\in \mathcal{I}}$ is a basis of $\mathfrak{A}$
and we have $\dim_{\C} \mathfrak{A}=d+1$
if $\mathfrak{X}$ is of class $d$.

By \ref{AS:commutative}, $\mathfrak{A}$ is semisimple.
This implies that $\mathfrak{A}$ has another basis  
$\{E_j\}_{j\in \mathcal{J}}$ consisting of the primitive idempotents of $\mathfrak{A}$,
where $\mathcal{J}$ is a finite set
and there exists $j_0\in \mathcal{J}$ such that $E_{j_0}=\frac{1}{|X|}J_X$.
Since $\{A_i\}_{i\in \mathcal{I}}$ and $\{E_j\}_{j\in \mathcal{J}}$ are bases of $\mathfrak{A}$,
$|\mathcal{I}|=|\mathcal{J}|$ holds.
By \ref{AS:Hadamard}, $\mathfrak{A}$ is closed under entrywise multiplication,
which product is denoted by $\circ$ and called the \emph{Hadamard product}.
Then for $i,j,k\in \mathcal{J}$,
there exists a real number (in fact, nonnegative number) $q^k_{ij}$
such that $(|X| E_i)\circ (|X|E_j)=\sum_{k\in \mathcal{J}}q^k_{ij}|X|E_k$,
and $q^k_{ij}$ are called the \emph{Krein numbers} of $\mathfrak{X}$.

The entries of the \emph{first eigenmatrix} $P:=(P_i(j))_{j\in \mathcal{J}, i\in \mathcal{I}}$
and the \emph{second eigenmatrix} $Q:=(Q_j(i))_{i\in \mathcal{I}, j\in \mathcal{J}}$
are defined by
\[
A_i = \sum_{j\in \mathcal{J}}P_i(j)E_j \  
\text{and}\  
|X| E_j = \sum_{i\in \mathcal{I}}Q_j(i)A_i,
\]
respectively.

A symmetric association scheme $\mathfrak{X}=(X,\{R_i\}_{i\in \mathcal{I}})$ of class $d$
is called \emph{$P$-polynomial}
if it satisfies the following conditions:
$\mathcal{I}=\{0,1,\ldots ,d\}$
and there exists a univariate polynomial $v_i$ of degree $i$ such that $A_i=v_i(A_1)$
for each $i\in \{0,1,\ldots ,d\}$. 
Similarly, a symmetric association scheme $\mathfrak{X}=(X,\{R_i\}_{i\in \mathcal{I}})$ of class $d$
is called \emph{$Q$-polynomial}
if it satisfies the following conditions:
$\mathcal{J}=\{0,1,\ldots ,d\}$
and there exists a univariate polynomial $v^\ast_j$ of degree $j$ such that $|X|E_j=v^\ast _j(|X| E_1)$
(under the Hadamard product)
for each $j\in \{0,1,\ldots ,d\}$. 
The following condition is well known as an equivalent condition of the property of $P$-polynomial: for $i\in \{0,1,\ldots ,d\}$, the three-term recurrence formula 
\[
    A_1 A_i = p^{i-1}_{1i} A_{i-1} +p^{i}_{1i} A_i +p^{i+1}_{1i} A_{i+1}
\]
holds.
Note that $p^{-1}_{10}A_{-1}$ and $p^{d+1}_{1d}A_{d+1}$ are regarded as zero.
Similarly, an equivalent condition of the property of $Q$-polynomial
is the following: for $i\in \{0,1,\ldots ,d\}$, the three-term recurrence formula 
\[
    (|X| E_1) \circ (|X| E_i) = q^{i-1}_{1i}|X| E_{i-1} +q^{i}_{1i} |X| E_i +q^{i+1}_{1i} |X| E_{i+1}
\]
holds. 
Note that $q^{-1}_{10}|X|E_{-1}$ and $q^{d+1}_{1d}|X|E_{d+1}$ are regarded as zero.

\subsection{Monomial orders and Gr\"obner bases}
\label{sec:monomial_orders}

In this subsection, we explain the fundamentals of monomial orders and Gr\"obner bases. 
For further details, please refer to Cox-Little-O'Shea~\cite{Cox}.

The following notation for $\N^\ell:=\{(n_1,n_2,\ldots ,n_\ell) \mid \text{$n_i$ are nonnegative integers}\}$
will be used in this paper:
\begin{itemize}
    \item $o := (0,0,\ldots ,0)\in \N^\ell$;
    \item for $i=1,2,\ldots ,\ell$, $\epsilon_i\in \N^\ell$ denotes the $\ell$-tuple in which the $i$-th entry is $1$ and the remaining entries are $0$;
    \item for $\alpha=(n_1,n_2, \ldots ,n_\ell), \beta=(m_1,m_2, \ldots ,m_\ell)\in \N^\ell$,
    let $\alpha+\beta$ be $(n_1+m_1, n_2+m_2, \ldots , n_\ell +m_\ell)$,
    and when $n_i\ge m_i$ ($i=1,2,\ldots , \ell$), let $\alpha-\beta$ be $(n_1-m_1, n_2-m_2, \ldots , n_\ell -m_\ell)$;
    \item for $\alpha=(n_1,n_2, \ldots ,n_\ell)\in \N^\ell$, we write $\sum^\ell_{i=1}n_i$ by $|\alpha|$.
\end{itemize}

\begin{dfe}
    \label{df:monomialorder}
    A \emph{monomial order} $\le$ on $\C [x_1, x_2, \ldots , x_\ell]$
    is a relation on the set of monomials $x_1^{n_1}x_2^{n_2}\cdots x_\ell^{n_\ell}$ satisfying:
    \begin{enumerate}[label=$(\roman*)$]
        \item $\le$ is a total order;
        \item for monomials $u,v,w$, if $u\le v$, then $wu \le wv$;
        \item $\le$ is a well-ordering, i.e., any nonempty subset of the set of monomials
        has a minimum element under $\le$.
    \end{enumerate}
\end{dfe}
For $\alpha=(n_1,n_2,\ldots , n_\ell)\in \N^\ell$
and $\bm{x}=(x_1,x_2,\ldots , x_\ell)$,
we write the monomial $x_1^{n_1}x_2^{n_2}\cdots x_\ell^{n_\ell}$
by $\bm{x}^\alpha$.
Then $\alpha$ is called the \emph{multidegree} of $\bm{x}^\alpha$.
We shall use the same symbol $\le$ to denote an order of pairs in $\N^\ell$.
Since $\bm{x}^\alpha \cdot \bm{x}^\beta =\bm{x}^{\alpha+\beta}$,
the condition (ii) of Definition~\ref{df:monomialorder} is equivalent to
\begin{equation}
    \label{eq:a+c_b+c}
    \alpha\le \beta\  
    \  \Longrightarrow  \   
    \alpha + \gamma \le \beta + \gamma
\end{equation}
for $\alpha,\beta,\gamma\in \N^\ell$.
In particular, we have,
for $i=1,2,\ldots ,\ell$,
\begin{equation}
    \label{eq:slide}
    \alpha \le \beta
    \ \Longrightarrow \ 
    \alpha +\epsilon_i \le \beta +\epsilon_i.
\end{equation}

\begin{Rem}
    \label{Rem:a>0}
    By \cite[p.73]{Cox},
    the condition (iii) of Definition~\ref{df:monomialorder}
    is equivalent to
    \begin{equation}
        \label{eq:zero_is_min}
        \alpha \ge o\  \text{for any}\  \alpha\in \N^\ell.
    \end{equation}
    Moreover, by \eqref{eq:a+c_b+c} and \eqref{eq:zero_is_min},
    for $\alpha=(n_1,n_2,\ldots , n_\ell), \beta=(m_1,m_2,\ldots , m_\ell)\in \N^\ell$
    with $n_i\ge m_i$ ($i=1,2,\ldots ,\ell$),
    we have
    \begin{equation}
        \label{eq:b_le_a}
        \alpha \ge \beta.
    \end{equation}
\end{Rem}

\begin{ex}
Let $\alpha=(n_1,n_2, \ldots ,n_\ell), \beta=(m_1,m_2, \ldots ,m_\ell)\in \N^\ell$.
\begin{enumerate}[label=$(\roman*)$]
\item 
We define $\alpha \le_{\mathrm{lex}} \beta$ if the leftmost nonzero entry of 
$\alpha-\beta\in \Z^\ell$ is negative.
This  $\le_{\mathrm{lex}}$ is called the \emph{lexicographic}
(or \emph{lex}) order.

\item 
We define $\alpha \le_{\mathrm{grlex}} \beta$ if
\[
    |\alpha|<|\beta|\  \text{or}\   (|\alpha|=|\beta|\   \text{and}\  \alpha \le_{\mathrm{lex}} \beta).    
\]
This $\le_{\mathrm{grlex}}$ is called the \emph{graded lexicographic} (or \emph{grlex}) order.

\end{enumerate}
\end{ex}

Fix a monomial order $\le$ on $\N^\ell$.
For each nonzero polynomial $f\in \C[\bm{x}]$,
the \emph{multidegree} $\MD(f)$ and the \emph{leading term}  $\LT(f)$ of
$f  =\sum_{\alpha \in \N^\ell} c_\alpha \bm{x}^\alpha$ are defined as follows:
\[\MD(f) := \max\{ \alpha \in \N^\ell \mid   c_\alpha \neq 0   \} \;\;\text{and}\;\; \LT(f) := c_{\MD(f)} \bm{x}^{\MD(f)},\]
where the maximum is taken with respect to $\le$.

Let $I\subset \C[\bm{x}]$ be an ideal.
A subset $\mathcal{G}=\{g_1,\dots,g_m\} \subset I$
is called a \emph{Gr\"obner basis} of $I$ with respect to $\le$
if the following monomial ideals are equal:
\[ \langle  \LT(g_1), \LT(g_2),\ldots , \LT(g_m) \rangle =  \langle \LT(f) \mid f\in I  \rangle .\]
Equivalently, $\mathcal{G}$ is a Gr\"obner basis of $I$ if and only if
the leading term of any element of $I$ is divisible by one of the $\LT(g_i)$.
It is well known that $\mathcal{G}$ is a generating set of $I$
if $\mathcal{G}$ is a Gr\"obner basis of $I$.
We set the following two subsets of $\N^\ell$: $\MD(I):=\{\MD (f) \mid f\in I\setminus \{0\}\}$ and
$\MD(\mathcal{G}):=\{\MD (g) \mid g\in \mathcal{G}\}$.
To rephrase the definition of Gr\"obner bases in terms of multidegrees,
for a Gr\"obner basis $\mathcal{G}$ of $I$,
one can see
\begin{equation}
    \label{eq:MD(I)}
    \MD(I)=\{\alpha+\beta  \mid \alpha \in \MD(\mathcal{G}),\  \beta \in \N^\ell\}.
\end{equation}

In general, for a generator set $\mathcal{F}=\{f_1,f_2,\ldots ,f_m\}$ of an ideal $I$,
the quotient or remainder of a polynomial $f\in \C[\bm{x}]$
by $\mathcal{F}$ may not be uniquely determined.
Nevertheless, if the generator set of $I$ is
a Gr\"obner basis,
then the quotient and remainder of $f$ are uniquely determined.
More precisely, it is as follows.
Let $I\subset \C [\bm{x}]$ be an ideal and $\mathcal{G}=\{g_1, g_2, \ldots ,g_m\}$
be a Gr\"obner basis of $I$ with respect to $\le$.
Then given $f\in \C[\bm{x}]$, there exist unique $r\in \C[\bm{x}]$ and $g\in I$
such that $f=g +r$ and 
no term of $r$ is divisible by $\LT(g_i)$ for any $i=1,2,\ldots ,m$
(see~\cite[p.83]{Cox}).
The polynomial $r$ is called the \emph{remainder} of $f$ on division by $\mathcal{G}$.
We write $r=\overline{f}^\mathcal{G}$.
Since $f\equiv \overline{f}^\mathcal{G} \pmod{I}$,
we can regard $\overline{f}^\mathcal{G}$ as representatives of the quotient ring $\C[\bm{x}]/I$ of $\C[\bm{x}]$ by $I$.
From this fact, the following proposition is obtained.


\begin{pro}(see~\cite[p.248]{Cox})
\label{pro:quotient_ring}
Fix a monomial ordering on $\N^\ell$ and let $I$ be an ideal of $\C[\bm{x}]$.
The following hold:
\begin{enumerate}[label=$(\roman*)$]
\item the elements of $\{\bm{x}^\alpha \mid \alpha \notin \MD(I)\}$
are linearly independent modulo $I$;
\item $\C[\bm{x}]/I$ is isomorphic as a $\C$-vector space to $\Span \{\bm{x}^\alpha \mid \alpha \notin \MD(I)\}$.
Moreover, $\C[\bm{x}]/I$ has the product structure $[f][g]=[fg]$,
namely, $\C[\bm{x}]/I$ is a commutative algebra.
\end{enumerate}
\end{pro}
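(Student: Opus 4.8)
The plan is to fix a Gr\"obner basis $\mathcal{G}=\{g_1,\ldots ,g_m\}$ of $I$ with respect to the given monomial order (such a basis exists by the Hilbert basis theorem) and to work with the remainder map $\rho\colon \C[\bm{x}]\to \C[\bm{x}]$, $\rho(f)=\overline{f}^{\mathcal{G}}$, recalled above. The first thing I would record is that $\rho$ takes values in $\Span\{\bm{x}^\alpha \mid \alpha\notin \MD(I)\}$: by construction no term of $\overline{f}^{\mathcal{G}}$ is divisible by any $\LT(g_i)$, and since $\LT(g_i)$ is a nonzero scalar times $\bm{x}^{\MD(g_i)}$ with $\MD(g_i)\in \MD(\mathcal{G})$, a monomial $\bm{x}^\gamma$ is divisible by some $\LT(g_i)$ exactly when $\gamma=\MD(g_i)+\beta$ for some $\beta\in \N^\ell$, i.e.\ when $\gamma\in \MD(I)$ by \eqref{eq:MD(I)}. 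Hence every exponent occurring in $\overline{f}^{\mathcal{G}}$ lies outside $\MD(I)$.

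For part $(i)$ I would argue directly from the definition of $\MD(I)$. Suppose $f=\sum_{\alpha\notin \MD(I)}c_\alpha \bm{x}^\alpha$ lies in $I$ and is nonzero. Then $\MD(f)=\max\{\alpha\mid c_\alpha\neq 0\}$ is one of the exponents appearing in $f$, so $\MD(f)\notin \MD(I)$; but $f\in I\setminus\{0\}$ forces $\MD(f)\in \MD(I)$, a contradiction. Thus no nontrivial $\C$-linear combination of the $\bm{x}^\alpha$ with $\alpha\notin\MD(I)$ lies in $I$, which is exactly the linear independence modulo $I$.

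For part $(ii)$ the key points are: (a) $\rho$ is $\C$-linear, which follows from the uniqueness in the division statement (if $f=g_f+r_f$ and $h=g_h+r_h$ are the division expressions, then $f+h=(g_f+g_h)+(r_f+r_h)$ with $g_f+g_h\in I$ and $r_f+r_h$ having no term divisible by any $\LT(g_i)$, so uniqueness gives $\rho(f+h)=r_f+r_h$; scalars are similar); (b) $\ker\rho=I$, since $\rho(f)=0$ means $f=g\in I$, and conversely any $f\in I$ has division expression $f=f+0$; and (c) $\rho$ is onto $\Span\{\bm{x}^\alpha\mid \alpha\notin\MD(I)\}$, because for $\alpha\notin\MD(I)$ the monomial $\bm{x}^\alpha$ is not divisible by any $\LT(g_i)$ and hence is its own remainder. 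Together, (a)--(c) show that $\rho$ descends to a $\C$-linear isomorphism $\C[\bm{x}]/I\xrightarrow{\ \sim\ }\Span\{\bm{x}^\alpha\mid \alpha\notin\MD(I)\}$. The ring structure $[f][g]=[fg]$ is the usual quotient-ring multiplication, well defined because $I$ is an ideal and commutative because $\C[\bm{x}]$ is; hence $\C[\bm{x}]/I$ is a commutative $\C$-algebra.

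This is a classical statement, so I do not anticipate a real obstacle; the one place that needs care is the well-definedness and $\C$-linearity of $\rho$, which genuinely relies on $\mathcal{G}$ being a Gr\"obner basis (so that the remainder on division is unique), and on the translation of ``no term divisible by $\LT(g_i)$'' into the combinatorial condition $\gamma\notin \MD(I)$ via \eqref{eq:MD(I)}. As an alternative to step (c) one could bypass the division algorithm entirely and prove the direct-sum decomposition $\C[\bm{x}]=\Span\{\bm{x}^\alpha\mid \alpha\notin\MD(I)\}\oplus I$ by a minimal-multidegree counterexample argument, using the well-ordering property of $\le$ together with the definition of $\MD(I)$.
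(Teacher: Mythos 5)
Your proof is correct and follows essentially the same route the paper takes: the paper does not spell out a proof but cites Cox--Little--O'Shea and relies on exactly the ingredients you use, namely the unique remainder $\overline{f}^{\mathcal{G}}$ on division by a Gr\"obner basis, the fact that $f\equiv \overline{f}^{\mathcal{G}} \pmod{I}$, and the identification via \eqref{eq:MD(I)} of the standard monomials with $\{\bm{x}^\alpha \mid \alpha\notin\MD(I)\}$. Your part~(i) argument (the leading exponent of a nonzero element of $I$ must lie in $\MD(I)$) and the verification that $\rho$ is linear, has kernel $I$, and is surjective onto the span of standard monomials are all sound.
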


\subsection{Bivariate polynomial 
association schemes of Bernard-Cramp\'{e}-d'Andecy-Vinet-Zaimi}
\label{sec:Bernard}

Bernard et al.~\cite{bi} introduced the concept of $(a,b)$-compatible
bivariate polynomials and $(a,b)$-compatible domains
in order to define bivariate $P$-polynomial and/or $Q$-polynomial association schemes
\footnote{In~\cite{bi}, they used the symbol ``$(\alpha,\beta)$-compatible''.
In this paper, $(\alpha,\beta)$ will be denoted by $(a,b)$ to distinguish it from the symbols $\alpha,\beta\in \N^\ell$ in Subsection~\ref{sec:monomial_orders}.}.
We would like to briefly review the definitions here.

For real numbers $0\le a \le 1$ and $0\le b <1$,
a partial order $\preceq_{(a,b)}$ on $\N^2$ is defined by
\[
(m,n)\preceq_{(a,b)}(i,j)
\Longleftrightarrow
\begin{cases}
    m + a n \le i + a j & \text{and}\\
    b m + n \le b i +j.
\end{cases}
\]
Note that if $(m,n)\preceq_{(a,b)}(i,j)$, then $(n,m)\le_{\mathrm{grlex}} (j,i)$
\footnote{
The grlex order defined in Subsection~\ref{sec:monomial_orders} satisfies $(0,1)\le_{\mathrm{grlex}} (1,0)$,
but in \cite{bi} it requires that
$(1,0)\le_{\mathrm{grlex}} (0,1)$
by permuting the first and second variables.
}.

A bivariate polynomial $v(x, y)$ is called
\emph{$(a,b)$-compatible} of degree $(i, j)$
if $v(x, y)$ is written as
\[
v(x,y) = \sum_{(m,n)\preceq_{(a,b)}(i,j)}
c_{mn} x^m y^n
\]
with $c_{ij}\neq 0$.
Also, a domain $\mathcal{D}\subset \N^2$ is called
\emph{$(a,b)$-compatible} if for any $(i,j)\in \mathcal{D}$,
one has
\[
    (m,n)\preceq_{(a,b)}(i,j)
    \ 
    \Longrightarrow
    (m,n)\in \mathcal{D}.
\]

\begin{dfe}[Definition 2.3 of \cite{bi}]
    \label{df:bPpoly}
    Let $\mathcal{D}\subset \N^2$.
    A symmetric association scheme $\mathfrak{X}=(X,\{A_i\}_{i\in \mathcal{I}})$ is called \emph{bivariate $P$-polynomial}
    of type $(a,b)$ on the domain $\mathcal{D}$ 
    if the following two conditions are satisfied:
    \begin{enumerate}[label=$(\roman*)$]
        \item there exists a relabeling of the adjacency matrices of $\mathfrak{X}$:
        \[
        \{A_i\}_{i\in \mathcal{I}} = \{A_{mn}\}_{(m,n)\in \mathcal{D}},   
        \]
        such that, for $(i,j)\in \mathcal{D}$,
        \[
            A_{ij}=v_{ij}(A_{10},A_{01}),    
        \]
        where $v_{ij}(x,y)$ is a $(a,b)$-compatible bivariate polynomial of degree $(i,j)$;
        \item $\mathcal{D}$ is $(a,b)$-compatible.
    \end{enumerate}
\end{dfe}

\begin{dfe}[Definition 4.1 of \cite{bi}]
    \label{df:bQpoly}
    Let $\mathcal{D}^\ast\subset \N^2$.
    A symmetric association scheme $\mathfrak{X}=(X,\mathcal{R})$ with the primitive idempotents $\{E_j\}_{j\in \mathcal{J}}$ is called \emph{bivariate $Q$-polynomial}
    of type $(a,b)$ on the domain $\mathcal{D}^\ast$ 
    if the following two conditions are satisfied:
    \begin{enumerate}[label=$(\roman*)$]
        \item there exists a relabeling of the primitive idempotents of $\mathfrak{X}$:
        \[
        \{E_j\}_{j\in \mathcal{J}} = \{E_{mn}\}_{(m,n)\in \mathcal{D}^\ast},   
        \]
        such that, for $(i,j)\in \mathcal{D}^\ast$,
        \[
            |X| E_{ij}=v^\ast_{ij}(|X|E_{10},|X|E_{01})
            \ \text{(under the Hadamard product)},    
        \]
        where $v^\ast_{ij}(x,y)$ is a $(a,b)$-compatible bivariate polynomial of degree $(i,j)$;
        \item $\mathcal{D}^\ast$ is $(a,b)$-compatible.
    \end{enumerate}
\end{dfe}

\subsection{Definition of multivariate polynomial association schemes}
\label{sec:Def}

In this subsection, 
the definitions of multivariate $P$-polynomial and/or $Q$-polynomial 
association schemes are introduced,
following the definition of bivariate $P$-polynomial and/or $Q$-polynomial in Subsection~\ref{sec:Bernard}.
Note that the new definitions are generalizations from bivariate to multivariate, from type $(a,b)$ to monomial order $\le$ and from symmetric to commutative, with respect to the definitions in Subsection~\ref{sec:Bernard}.
We also show that analogies of the results in \cite{bi}
hold for the definitions of multivariate $P$-polynomial and/or $Q$-polynomial association schemes as well.


\begin{dfe}
    \label{df:abPpoly}
    Let $\mathcal{D}\subset \N^\ell$
    having $\epsilon_1,\epsilon_2,\ldots ,\epsilon_\ell$
    and $\le$ be a monomial order on $\N^\ell$.
    A commutative association scheme $\mathfrak{X}=(X,\mathcal{R})$ is called \emph{$\ell$-variate $P$-polynomial}
    on the domain $\mathcal{D}$ with respect to $\le$
    if the following three conditions are satisfied:
    \begin{enumerate}[label=$(\roman*)$]
        \item If $(n_1,n_2,\ldots ,n_\ell)\in \mathcal{D}$
        and $0\le m_i \leq n_i$ for $i=1,2,\ldots ,\ell$,
        then $(m_1,m_2,\ldots ,m_\ell)\in \mathcal{D}$;
        \item there exists a relabeling of the adjacency matrices of $\mathfrak{X}$:
        \[
        \{A_i\}_{i\in \mathcal{I}} = \{A_\alpha \}_{\alpha\in \mathcal{D}},   
        \]
        such that, for $\alpha\in \mathcal{D}$,
        \begin{equation}
            \label{eq:A=vij}
            A_\alpha=v_\alpha(A_{\epsilon_1},A_{\epsilon_2},\ldots ,A_{\epsilon_\ell}),    
        \end{equation}
        where $v_\alpha(\bm{x})$ is an $\ell$-variate polynomial
        of multidegree $\alpha$ with respect to $\le$
        and all monomials $\bm{x}^\beta$ in $v_\alpha(\bm{x})$ satisfy $\beta \in \mathcal{D}$;
        \item for $i=1,2,\ldots ,\ell$ and $\alpha=(n_1,n_2,\ldots,n_\ell)\in \mathcal{D}$,
        the product
        $A_{\epsilon_i}\cdot A_{\epsilon_1}^{n_1}A_{\epsilon_2}^{n_2}\cdots A_{\epsilon_\ell}^{n_\ell}$
        is a linear combination of
        \[
         \{A_{\epsilon_1}^{m_1}A_{\epsilon_2}^{m_2}\cdots A_{\epsilon_\ell}^{m_\ell} \mid \beta =(m_1,m_2,\ldots ,m_\ell)\in \mathcal{D},\  \beta \le  \alpha +\epsilon_i\}.   
        \]
    \end{enumerate}

\end{dfe}

Hereafter, we use notation $\bm{A}$ to denote $(A_{\epsilon_1},A_{\epsilon_2},\ldots ,A_{\epsilon_\ell})$.
Also, for $\alpha=(n_1,n_2,\ldots , n_\ell)\in \N^\ell$,
we write the monomial $A_{\epsilon_1}^{n_1} A_{\epsilon_2}^{n_2} \cdots A_{\epsilon_\ell}^{n_\ell}$
by $\bm{A}^\alpha$.

The requirement in Condition (iii) of Definition~\ref{df:abPpoly} may seem odd at first sight.
In general, $\alpha+\epsilon_i$ is not always in $\mathcal{D}$,
thus $A_{\epsilon_i}\cdot \bm{A}^\alpha
=\bm{A}^{\alpha+\epsilon_i}$ may not be represented by an $\ell$-variate polynomial of multidegree $\alpha+\epsilon_i$ on $\mathcal{D}$.
For this reason, we control the behavior of $A_{\epsilon_i}\cdot \bm{A}^\alpha$ via the condition (iii) of Definition~\ref{df:abPpoly}.

\begin{Rem}
\label{rem:biP1}
By (i) of Definition~\ref{df:abPpoly},
$\mathcal{D}$ must contain $o=(0,\ldots ,0)$.
Moreover, $A_{o}$ coincides with the identity matrix $I_X$.
\end{Rem}

\begin{Rem}
\label{rem:biP2}
For any commutative association scheme
$\mathfrak{X}=(X,\mathcal{R})$ of class $d$ with $\mathcal{I}=\{0,1,2,\ldots ,d\}$,
let $\mathcal{D}=\{o,\epsilon_1,\epsilon_2,\ldots ,\epsilon_d\}$
and put $A_0=A_o$ and $A_i=A_{\epsilon_i}$ for $i=1,2,\ldots ,d$.
Then $\mathfrak{X}$ is a $d$-variate $P$-polynomial association scheme
on $\mathcal{D}$ with respect to the graded lexicographic order $\le_{\mathrm{grlex}}$.
Therefore, we usually consider the ``essential'' variate
for $\mathfrak{X}$, i.e., we consider
$\ell =\min \{\ell' \mid \text{$\mathfrak{X}$ is $\ell'$-variate $P$-polynomial}\} $.
\end{Rem}

\begin{Rem}
\label{rem:biP3}
We do not know whether
all bivariate $P$-polynomial association schemes of type $(a,b)$
in the sense of \cite{bi} become
bivariate $P$-polynomial association schemes
in the sense of Definition~\ref{df:abPpoly} or not.
However, we can show that all examples in \cite{bi} are
bivariate $P$-polynomial association schemes
in the sense of Definition~\ref{df:abPpoly} with respect to the graded lexicographic order $\le_{\mathrm{grlex}}$.
For direct product of $P$-polynomial association schemes,
symmetrizations of association schemes and
nonbinary Johnson association schemes,
we will prove this fact in a more general setting in Section~\ref{sec:DirectproductAS},
\ref{sec:GHS} and \ref{sec:GJS}, respectively.
For association schemes obtained from attenuated spaces,
we will prove this fact for any parameter in Theorem~\ref{thm:attenuated}.
For a generalized 24-cell (resp. association schemes based on isotropic spaces),
using (3.27) and (3.28) (resp. (3.49)) in \cite{bi},
it is possible to prove this fact similarly to Theorem~\ref{thm:attenuated}.
\end{Rem}

Multivariate $Q$-polynomial association schemes can be defined
as in Definition~\ref{df:abPpoly}.
Note that Remarks \ref{rem:biP1}, \ref{rem:biP2} and \ref{rem:biP3} for the following multivariate $Q$-polynomial versions also hold.

\begin{dfe}
    \label{df:abQpoly}
    Let $\mathcal{D}^\ast \subset \N^\ell$
    having $\epsilon_1,\epsilon_2,\ldots ,\epsilon_\ell$
    and $\le$ be a monomial order on $\N^\ell$.
    A commutative association scheme $\mathfrak{X}=(X,\mathcal{R})$
    with the primitive idempotents $\{E_j\}_{j\in \mathcal{J}}$ is called \emph{$\ell$-variate $Q$-polynomial}
    on the domain $\mathcal{D}^\ast$ with respect to $\le$
    if the following three conditions are satisfied:
    \begin{enumerate}[label=$(\roman*)$]
        \item if $(n_1,n_2,\ldots ,n_\ell)\in \mathcal{D}^\ast$
        and $0\le m_i \leq n_i$ for $i=1,2,\ldots ,\ell$,
        then $(m_1,m_2,\ldots ,m_\ell)\in \mathcal{D}^\ast$;
        \item there exists a relabeling of the adjacency matrices:
        \[
        \{E_j\}_{j\in \mathcal{J}} = \{E_\alpha \}_{\alpha\in \mathcal{D}^\ast}, 
        \]
        such that, for $\alpha\in \mathcal{D}^\ast$,
        \[
            |X| E_\alpha=v^\ast_\alpha(|X| E_{\epsilon_1},|X| E_{\epsilon_2},\ldots ,|X| E_{\epsilon_\ell})
            \ \text{(under the Hadamard product),}
        \]
        where $v^\ast_\alpha(\bm{x})$ is an $\ell$-variate polynomial
        of multidegree $\alpha$ with respect to $\le$
        and all monomials $\bm{x}^\beta$ in $v^\ast_\alpha(\bm{x})$ satisfy $\beta \in \mathcal{D}^\ast$;
        \item for $i=1,2,\ldots ,\ell$ and $\alpha=(n_1,n_2,\ldots,n_\ell)\in \mathcal{D}^\ast$,
        the product
        $E_{\epsilon_i}\circ E_{\epsilon_1}^{\circ n_1}\circ E_{\epsilon_2}^{\circ n_2}\circ \cdots  \circ E_{\epsilon_\ell}^{\circ n_\ell}$
        is a linear combination of
        \[
            \{E_{\epsilon_1}^{\circ m_1}\circ E_{\epsilon_2}^{\circ m_2}\circ \cdots  \circ E_{\epsilon_\ell}^{\circ m_\ell} \mid \beta=(m_1,m_2,\ldots ,m_\ell)\in \mathcal{D}^\ast,\  \beta\le  \alpha+\epsilon_i\}.   
        \]
    \end{enumerate}

\end{dfe}

In the following, we will see various properties of multivariate $P$-polynomial association schemes.
First, we see that the condition (iii) of Definition~\ref{df:abPpoly} can be generalized as follows.
\begin{lem}
\label{lem:generalized_iii}
Let $\mathfrak{X}$ be an $\ell$-variate $P$-polynomial association scheme on $\mathcal{D}$.
For each $\alpha \in \N^\ell$, $\bm{A}^\alpha$
is a linear combination of $\{\bm{A}^\beta \mid \beta \in \mathcal{D},\  \beta \le  \alpha\}$.   
\end{lem}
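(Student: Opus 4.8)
The plan is to argue by well-founded induction on $\alpha\in\N^\ell$ with respect to the monomial order $\le$, which by Definition~\ref{df:monomialorder}(iii) is a well-ordering, so this form of induction is legitimate. The base case is $\alpha=o$: by Remark~\ref{rem:biP1} one has $\bm{A}^o=A_o=I_X$, and $o\in\mathcal{D}$ with $o\le o$, so the claim is immediate. Note that the statement to be proved, and hence the inductive hypothesis, concerns \emph{all} exponents in $\N^\ell$, not only those in $\mathcal{D}$; this is what makes the induction go through even though the intermediate exponent produced below need not lie in $\mathcal{D}$.

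For the inductive step I would assume $\alpha=(n_1,\dots,n_\ell)\ne o$ and that the conclusion holds for every $\gamma\in\N^\ell$ with $\gamma<\alpha$. Pick an index $i$ with $n_i\ge 1$ and set $\alpha':=\alpha-\epsilon_i$, so that $\alpha=\alpha'+\epsilon_i$ and, since the matrices $A_{\epsilon_1},\dots,A_{\epsilon_\ell}$ commute, $\bm{A}^\alpha=A_{\epsilon_i}\,\bm{A}^{\alpha'}$. From \eqref{eq:zero_is_min} one gets $o\le\epsilon_i$, hence $o<\epsilon_i$ as $o\ne\epsilon_i$; combining this with \eqref{eq:a+c_b+c} and cancellation in $\N^\ell$ yields $\alpha'=\alpha'+o<\alpha'+\epsilon_i=\alpha$. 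Therefore the inductive hypothesis applies to $\alpha'$, and I may write $\bm{A}^{\alpha'}=\sum_\gamma c_\gamma\,\bm{A}^\gamma$ with the sum over $\gamma\in\mathcal{D}$ satisfying $\gamma\le\alpha'$.

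It then remains to multiply through by $A_{\epsilon_i}$ and feed each summand into Condition~(iii) of Definition~\ref{df:abPpoly}: for each such $\gamma\in\mathcal{D}$ that condition expresses $A_{\epsilon_i}\,\bm{A}^\gamma$ as a linear combination of $\{\bm{A}^\beta\mid\beta\in\mathcal{D},\ \beta\le\gamma+\epsilon_i\}$. The one bookkeeping point is that $\gamma\le\alpha'$ forces $\gamma+\epsilon_i\le\alpha'+\epsilon_i=\alpha$ by \eqref{eq:slide}, so every monomial $\bm{A}^\beta$ occurring in this expansion has $\beta\in\mathcal{D}$ and $\beta\le\alpha$. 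Substituting back into $\bm{A}^\alpha=\sum_\gamma c_\gamma\,A_{\epsilon_i}\bm{A}^\gamma$ and collecting terms gives the desired representation of $\bm{A}^\alpha$, which closes the induction.

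I do not anticipate a genuine obstacle: this is a routine Noetherian induction, and the only things requiring care are (a) deducing the \emph{strict} inequality $\alpha'<\alpha$ purely from the monomial-order axioms, and (b) verifying that the two degree bounds chain correctly, i.e.\ that "$\gamma\le\alpha'$" from the inductive hypothesis together with "$\beta\le\gamma+\epsilon_i$" from Condition~(iii) combines to "$\beta\le\alpha$". Everything else is just $\C$-linearity of multiplication in the Bose--Mesner algebra.
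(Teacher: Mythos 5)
Your proof is correct and follows essentially the same route as the paper: well-founded induction on the monomial order, peeling off one $\epsilon_i$ and invoking Condition~(iii) of Definition~\ref{df:abPpoly}. The only (harmless) difference is organizational — the paper splits into cases according to whether $\alpha-\epsilon_i\in\mathcal{D}$ and in the second case applies the induction hypothesis twice, whereas you apply the hypothesis once to $\alpha-\epsilon_i$ and then feed every resulting term into Condition~(iii), which slightly streamlines the argument.
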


\begin{proof}
We use induction on $\le$ to check this.
If $\alpha=o$, then it obviously holds.
Now assume that $\alpha=(n_1,n_2,\ldots ,n_\ell) > o$.
Then there exists $i$ such that $n_i\ge 1$.
This implies that $\alpha - \epsilon_i \in \N^\ell$
and $\alpha - \epsilon_i < \alpha$ by \eqref{eq:b_le_a}.
If $\alpha - \epsilon_i\in \mathcal{D}$, then
by (iii) of Definition~\ref{df:abPpoly},
$\bm{A}^\alpha = A_{\epsilon_i} \bm{A}^{\alpha-\epsilon_i}$
is combination of $\{\bm{A}^\beta \mid \beta \in \mathcal{D},\  \beta \le  \alpha\}$.
Otherwise, by the induction hypothesis, 
$\bm{A}^{\alpha-\epsilon_i}$ is written as
\[
    \bm{A}^{\alpha-\epsilon_i}= \sum_{\substack{\beta\in \mathcal{D}\\ \beta < \alpha-\epsilon_i}} c_{\beta} \bm{A}^\beta.   
\]
Thus, we have
\[
    \bm{A}^{\alpha}= \sum_{\substack{\beta\in \mathcal{D}\\ \beta < \alpha-\epsilon_i}} c_{\beta} \bm{A}^{\beta+\epsilon_i}.   
\]
By \eqref{eq:a+c_b+c}, $\beta+\epsilon_i < \alpha$ holds for any $\beta < \alpha-\epsilon_i$.
Hence, by the induction hypothesis again, $\bm{A}^{\beta +\epsilon_i}$ is a linear combination of $\{\bm{A}^\gamma \mid \gamma \in \mathcal{D},\  \gamma \le \beta+\epsilon_i\}$.
By transitivity of $\le$, we have $\gamma \le \beta+\epsilon_i<\alpha$.
Therefore, the desired result follows.
\end{proof}


\begin{lem}
\label{lem:BMalg}
If $\mathfrak{X}$ is an $\ell$-variate $P$-polynomial association scheme on $\mathcal{D}$,
then 
\begin{align}
    \label{eq:geneset}
    \{\bm{A}^\alpha
    \mid \alpha \in \mathcal{D} \}
\end{align}
is a basis of the Bose-Mesner algebra of $\mathfrak{X}$.
\end{lem}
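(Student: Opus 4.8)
The plan is to show that the set $\{\bm{A}^\alpha \mid \alpha \in \mathcal{D}\}$ both spans the Bose-Mesner algebra $\mathfrak{A}$ and is linearly independent; since $|\mathcal{D}| = |\mathcal{I}| = \dim_{\C}\mathfrak{A}$ (because the relabeling in Definition~\ref{df:abPpoly}(ii) is a bijection $\{A_i\}_{i\in\mathcal{I}} = \{A_\alpha\}_{\alpha\in\mathcal{D}}$), either one of spanning or independence would actually suffice, but it is cleanest to argue spanning directly and invoke the dimension count.

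First I would check that each $\bm{A}^\alpha$ lies in $\mathfrak{A}$: this is immediate since $\mathfrak{A}$ is an algebra containing $A_{\epsilon_1},\dots,A_{\epsilon_\ell}$, so it contains every product $\bm{A}^\alpha = A_{\epsilon_1}^{n_1}\cdots A_{\epsilon_\ell}^{n_\ell}$. Next, for spanning, I would use Definition~\ref{df:abPpoly}(ii): for every $\alpha \in \mathcal{D}$ we have $A_\alpha = v_\alpha(\bm{A})$, where $v_\alpha(\bm{x})$ is a polynomial all of whose monomials $\bm{x}^\beta$ satisfy $\beta \in \mathcal{D}$. Hence each $A_\alpha$ is a $\C$-linear combination of $\{\bm{A}^\beta \mid \beta \in \mathcal{D}\}$. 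Since $\{A_\alpha\}_{\alpha\in\mathcal{D}} = \{A_i\}_{i\in\mathcal{I}}$ is a basis of $\mathfrak{A}$, it follows that $\{\bm{A}^\alpha \mid \alpha \in \mathcal{D}\}$ spans $\mathfrak{A}$.

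Finally, to conclude it is a basis, I would compare cardinalities: the relabeling is a bijection, so $|\mathcal{D}| = |\mathcal{I}| = d+1 = \dim_{\C}\mathfrak{A}$. A spanning set of size equal to the dimension is automatically a basis. (One subtlety to address: a priori the set $\{\bm{A}^\alpha\mid\alpha\in\mathcal{D}\}$ could have fewer than $|\mathcal{D}|$ distinct elements if two monomials coincided as matrices; but a spanning set of an $(d+1)$-dimensional space indexed by a set of size $d+1$ that spans must in fact consist of $d+1$ distinct, linearly independent vectors, so no collision occurs.)

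The main obstacle is essentially bookkeeping rather than a deep difficulty: one must be careful that Definition~\ref{df:abPpoly}(ii) only guarantees a \emph{triangularity} of the change of basis between $\{A_\alpha\}$ and $\{\bm{A}^\alpha\}$ (via the leading term $\bm{x}^\alpha$ of $v_\alpha$ and the monomial order), and I would note that this triangular structure — $A_\alpha = c\,\bm{A}^\alpha + (\text{lower terms in }\le)$ with $c\neq 0$ — gives an alternative, self-contained proof of linear independence by induction on $\le$, without needing to separately know that the $\bm{A}^\alpha$ are distinct. Either route works; I would likely present the dimension-count argument as the short proof and perhaps remark on the triangularity for transparency.
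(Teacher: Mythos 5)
Your proposal is correct and follows essentially the same route as the paper's proof: use Definition~\ref{df:abPpoly}(ii) to see that each $A_\alpha$ is a linear combination of the monomials $\bm{A}^\beta$ with $\beta\in\mathcal{D}$, so these monomials span the Bose-Mesner algebra, and then conclude by comparing $|\mathcal{D}|$ with $\dim_\C\mathfrak{A}$. Your extra remarks (that each $\bm{A}^\alpha$ lies in $\mathfrak{A}$, the possible-collision caveat, and the alternative triangularity argument) are sound but not needed beyond what the paper does.
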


\begin{proof}
By Definition~\ref{df:abPpoly} (ii),
for each $\alpha \in \mathcal{D}$,
the polynomial $v_\alpha(\bm{x})$ of equation \eqref{eq:A=vij}
is a linear combination of the monomials $\bm{x}^\beta$
with $\beta \in \mathcal{D}$.
Thus, each $A_\alpha$ can be written as a linear combination of \eqref{eq:geneset}.
Since the cardinality of $\mathcal{D}$ is equal to
the dimension of the Bose-Mesner algebra,
the generating set \eqref{eq:geneset} is linearly independent.
Then the desired result follows.
\end{proof}

By (iii) of Definition~\ref{df:abPpoly},
for $\alpha\in \mathcal{D}$ and $i=1,2,\ldots ,\ell$ with $\alpha+\epsilon_i \notin \mathcal{D}$,
there exists a polynomial 
\[
    w_{\alpha+\epsilon_i}(\bm{x}):=
    \bm{x}^{\alpha+\epsilon_i} + \sum_{\substack{\beta\in \mathcal{D}\\ \beta < \alpha+\epsilon_i}} c_{\beta} \bm{x}^\beta
\]
of multidegree $\alpha+\epsilon_i$ in $\C [\bm{x}]$
such that $w_{\alpha+\epsilon_i}(\bm{A})=0$.
If there exist $\alpha , \alpha' \in \mathcal{D}$
and $i,i'=1,2,\ldots ,\ell$
such that $\alpha +\epsilon_i = \alpha' + \epsilon_{i'}$
and $\alpha +\epsilon_i, \alpha' + \epsilon_{i'}\notin \mathcal{D}$,
then $w_{\alpha+\epsilon_i}(\bm{x})=w_{\alpha'+\epsilon_{i'}}(\bm{x})$
by Lemma~\ref{lem:BMalg}.
Let $I$ be the ideal of $\C [\bm{x}]$
generated by 
\begin{equation}
    \label{eq:ideal}
    \mathcal{G}:=
    \{w_{\alpha+\epsilon_i}(\bm{x}) \mid \alpha\in \mathcal{D},\  i=1,2,\ldots ,\ell,\ \alpha + \epsilon_i \notin \mathcal{D}\}.
\end{equation}

\begin{lem}
\label{lem:Dout}
Let $\mathfrak{X}$ be an $\ell$-variate $P$-polynomial association scheme on $\mathcal{D}$
with respect to a monomial order $\le$.
For $\alpha\in \N^\ell \setminus \mathcal{D}$,
there exist $\beta_0 \in \MD(\mathcal{G})$ 
and $\gamma_0 \in \N^\ell$
such that $\alpha = \beta_0+\gamma_0$.
\end{lem}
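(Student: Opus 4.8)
The plan is to prove the statement by induction on $\le$ applied to $\alpha \in \N^\ell \setminus \mathcal{D}$, peeling off one $\epsilon_i$ at a time until we land inside $\mathcal{D}$, and then tracking the multidegrees of the relations produced along the way. First I would fix $\alpha \in \N^\ell \setminus \mathcal{D}$ and observe, as in the proof of Lemma~\ref{lem:generalized_iii}, that $\alpha > o$ (since $o \in \mathcal{D}$ by Remark~\ref{rem:biP1}), so some coordinate $n_i \ge 1$ and $\alpha - \epsilon_i \in \N^\ell$ with $\alpha - \epsilon_i < \alpha$ by \eqref{eq:b_le_a}. There are two cases. If $\alpha - \epsilon_i \in \mathcal{D}$, then by construction $w_{\alpha}(\bm x) = w_{(\alpha-\epsilon_i)+\epsilon_i}(\bm x)$ is one of the generators in $\mathcal{G}$ of \eqref{eq:ideal}, so $\alpha = \MD(w_\alpha) \in \MD(\mathcal{G})$ and we may take $\beta_0 = \alpha$, $\gamma_0 = o$. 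If instead $\alpha - \epsilon_i \notin \mathcal{D}$, then $\alpha - \epsilon_i \in \N^\ell \setminus \mathcal{D}$ and $\alpha - \epsilon_i < \alpha$, so the induction hypothesis gives $\beta_0 \in \MD(\mathcal{G})$ and $\gamma_0' \in \N^\ell$ with $\alpha - \epsilon_i = \beta_0 + \gamma_0'$; then $\alpha = \beta_0 + (\gamma_0' + \epsilon_i)$ and we take $\gamma_0 = \gamma_0' + \epsilon_i$. The well-ordering property of $\le$ makes this induction valid.

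The one point that needs slightly more care is the base-case structure of the induction: a priori the induction has to start somewhere, but because $\le$ is a well-ordering (Definition~\ref{df:monomialorder}(iii)), strong induction on $\le$ is legitimate with no separate base case beyond noting that the minimal element $o$ lies in $\mathcal{D}$ and hence is vacuously not a counterexample. So the argument really is just the two-case step above. The main obstacle, such as it is, is bookkeeping: one must make sure that in the first case the polynomial $w_\alpha$ is genuinely a member of $\mathcal{G}$ — this is exactly the hypothesis that $\alpha = (\alpha - \epsilon_i) + \epsilon_i$ with $\alpha - \epsilon_i \in \mathcal{D}$ and $\alpha \notin \mathcal{D}$, which is precisely the indexing condition in \eqref{eq:ideal} — and that $\MD(w_\alpha) = \alpha$, which holds by the explicit form of $w_{\alpha+\epsilon_i}$ displayed just before \eqref{eq:ideal} (its leading term is $\bm x^{\alpha+\epsilon_i}$, all other terms having strictly smaller multidegree).

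I expect no genuine difficulty here; the lemma is essentially a restatement of the observation that every element of $\N^\ell \setminus \mathcal{D}$ can be reached from $\mathcal{D}$ by successively adding standard basis vectors $\epsilon_i$, and that the first time one steps outside $\mathcal{D}$ one hits the multidegree of a generator of $\mathcal{G}$. Combined with \eqref{eq:MD(I)}, this lemma will presumably be used to identify $\MD(I) = \N^\ell \setminus \mathcal{D}$ and thereby to realize the Bose-Mesner algebra as $\C[\bm x]/I$ with $\{\bm x^\alpha \mid \alpha \in \mathcal{D}\}$ as the standard monomial basis, which is the content foreshadowed in Proposition~\ref{prop:A=C[x]/I}.
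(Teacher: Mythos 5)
Your proof is correct and takes essentially the same approach as the paper's: both exploit the well-ordering of $\le$ to descend from $\alpha$ by subtracting $\epsilon_i$'s until reaching the first point $\beta_0 \notin \mathcal{D}$ whose predecessor $\beta_0-\epsilon_i$ lies in $\mathcal{D}$, which is then the multidegree of a generator $w_{\beta_0}\in\mathcal{G}$. The only difference is presentational: the paper performs the descent in one step by taking the $\le$-minimum of $\{\beta\in\N^\ell\setminus\mathcal{D} \mid \alpha-\beta\in\N^\ell\}$, whereas you run an explicit strong induction on $\le$.
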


\begin{proof}
Let $\mathcal{C}_\alpha
:=
\{
\beta\in \N^\ell \setminus \mathcal{D}
\mid
\text{there exists $\gamma \in \N^\ell$
such that $\alpha = \beta +\gamma$}
\}$.
Note that $\alpha \in \mathcal{C}_\alpha$
and $\mathcal{C}_\alpha \neq \emptyset$.
Since $\le$ is a well-ordering, there exists 
the minimum element $\beta_0$ in $\mathcal{C}_\alpha$.
By Remark~\ref{rem:biP1}, we have $o \notin \mathcal{C}_\alpha$.
Thus, $\beta_0 \neq o$.
Then there exists $i=1,2,\ldots ,\ell$ such that
$\beta_0 - \epsilon_i \in \N^\ell$.
By the minimality of $\beta_0 \in \mathcal{C}_\alpha$,  it follows that $\beta_0 - \epsilon_i\in \mathcal{D}$.
Therefore, we have $w_{\beta_0}=w_{(\beta_0-\epsilon_i)+\epsilon_i} \in \mathcal{G}$
and $\beta_0 \in \MD(\mathcal{G})$.
Moreover, by the definition of $\mathcal{C}_\alpha$, there exists $\gamma_0 \in \N^\ell$
such that $\alpha = \beta_0 +\gamma_0$.
\end{proof}

\begin{pro}
    \label{prop:A=C[x]/I}
    Let $\mathfrak{X}$ be an $\ell$-variate $P$-polynomial association scheme on $\mathcal{D}$
    with respect to a monomial order $\le$.
    Then the followings hold:
    \begin{enumerate}[label=$(\roman*)$]
        \item $\mathcal{G}$ is a Gr\"obner basis of $I$;
        \item $\MD(I)=\N^\ell \setminus \mathcal{D}$ holds;
        \item The Bose-Mesner algebra $\mathfrak{A}$ of $\mathfrak{X}$
        is isomorphic to $\C [\bm{x}]/I$ as algebra.
    \end{enumerate}            
\end{pro}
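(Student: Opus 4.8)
The plan is to realize the Bose--Mesner algebra $\mathfrak{A}$ as $\C[\bm{x}]/I$ via the algebra homomorphism $\varphi\colon \C[\bm{x}]\to\mathfrak{A}$ determined by $x_i\mapsto A_{\epsilon_i}$ (equivalently $f\mapsto f(\bm{A})$), to show that $\ker\varphi=I$, and to pin down $\MD(I)$ along the way; every step reduces to the three lemmas already proved---Lemmas~\ref{lem:generalized_iii}, \ref{lem:BMalg} and \ref{lem:Dout}---together with the Gr\"obner-basis formula \eqref{eq:MD(I)}. The first thing I would record is the set identity
\[
\{\beta+\gamma\mid \beta\in\MD(\mathcal{G}),\ \gamma\in\N^\ell\}=\N^\ell\setminus\mathcal{D}.
\]
For ``$\subseteq$'': each $\beta\in\MD(\mathcal{G})$ equals some $\alpha+\epsilon_i$ with $\alpha\in\mathcal{D}$ and $\alpha+\epsilon_i\notin\mathcal{D}$ by \eqref{eq:ideal}, so $\beta\notin\mathcal{D}$; since $\beta\le\beta+\gamma$ in the coordinatewise order, the contrapositive of Condition~(i) of Definition~\ref{df:abPpoly} gives $\beta+\gamma\notin\mathcal{D}$. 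For ``$\supseteq$'': this is exactly the statement of Lemma~\ref{lem:Dout}.

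For part (i), I would take an arbitrary $f\in I\setminus\{0\}$ and argue that $\bm{x}^{\MD(f)}$ is divisible by $\LT(g)$ for some $g\in\mathcal{G}$; by the identity above this is the same as showing $\MD(f)\notin\mathcal{D}$, which is the crux. Since $\mathcal{G}$ generates $I$ and each generator $w_{\alpha+\epsilon_i}$ satisfies $w_{\alpha+\epsilon_i}(\bm{A})=0$ by construction, we have $f(\bm{A})=0$. Writing $f=c_{\MD(f)}\bm{x}^{\MD(f)}+(\text{terms of multidegree }<\MD(f))$ and assuming $\MD(f)\in\mathcal{D}$ for contradiction, I would substitute $\bm{A}$, use Lemma~\ref{lem:generalized_iii} to rewrite every strictly-lower monomial $\bm{A}^\beta$ as a linear combination of $\{\bm{A}^\gamma\mid\gamma\in\mathcal{D},\ \gamma\le\beta<\MD(f)\}$, and then compare coefficients against the basis $\{\bm{A}^\gamma\mid\gamma\in\mathcal{D}\}$ of Lemma~\ref{lem:BMalg}: since no $\gamma$ occurring in the rewriting equals $\MD(f)$ (they are all strictly below it), the coefficient of $\bm{A}^{\MD(f)}$ on the right-hand side is $c_{\MD(f)}$, forcing $c_{\MD(f)}=0$, a contradiction. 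Hence $\mathcal{G}$ is a Gr\"obner basis of $I$. Part (ii) then follows at once: combining (i) with \eqref{eq:MD(I)} and the set identity gives $\MD(I)=\{\beta+\gamma\mid\beta\in\MD(\mathcal{G}),\ \gamma\in\N^\ell\}=\N^\ell\setminus\mathcal{D}$.

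For part (iii), $\varphi$ is a well-defined algebra homomorphism because the matrices $A_{\epsilon_i}$ commute by Axiom~\ref{AS:commutative}; it is surjective because $\{\bm{A}^\alpha\mid\alpha\in\mathcal{D}\}$ spans $\mathfrak{A}$ by Lemma~\ref{lem:BMalg}; and $I\subseteq\ker\varphi$ since every $w_{\alpha+\epsilon_i}(\bm{A})=0$. Thus $\varphi$ induces a surjective algebra homomorphism $\overline{\varphi}\colon\C[\bm{x}]/I\to\mathfrak{A}$. By Proposition~\ref{pro:quotient_ring} together with part (ii), $\dim_{\C}\bigl(\C[\bm{x}]/I\bigr)=|\N^\ell\setminus\MD(I)|=|\mathcal{D}|=\dim_{\C}\mathfrak{A}$, so $\overline{\varphi}$ is a bijection, hence an algebra isomorphism.

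The main obstacle is the coefficient-comparison step inside part (i), namely showing that no nonzero element of $I$ can have its multidegree inside $\mathcal{D}$; this is where the interplay of Lemma~\ref{lem:generalized_iii} (to push lower terms back into $\mathcal{D}$) and Lemma~\ref{lem:BMalg} (linear independence) is essential. Once that is in hand, parts (ii) and (iii) are routine bookkeeping over the preceding lemmas and the general facts about Gr\"obner bases recalled in Subsection~\ref{sec:monomial_orders}.
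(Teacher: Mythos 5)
Your proposal is correct and follows essentially the same route as the paper: the crux in both is showing that no nonzero $f\in I$ can have $\MD(f)\in\mathcal{D}$ by evaluating $f(\bm{A})=0$ and comparing coefficients against the basis from Lemma~\ref{lem:BMalg} after reducing lower terms via Lemma~\ref{lem:generalized_iii}, with Lemma~\ref{lem:Dout} and the downward-closure condition supplying the two inclusions for $\MD(I)=\N^\ell\setminus\mathcal{D}$, and the evaluation map $f\mapsto f(\bm{A})$ giving the isomorphism in (iii). Your only deviation is organizational (establishing the set identity up front rather than splitting it between parts (i) and (ii)), which does not change the substance of the argument.
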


\begin{proof}
(i)
Let $f\in I$ with $\MD(f)=\alpha$.
Then there exist $\{q_g\}_{g\in \mathcal{G}} \subset \C [\bm{x}]$
such that $f=\sum_{g\in \mathcal{G}} q_g g$.
Assume $\alpha\in \mathcal{D}$.
Then $f$ is written as
\[
f=\sum_{\substack{\beta\in \N^\ell\\ \beta \le \alpha}} c_{\beta} \bm{x}^\beta,
\]
where $c_{\alpha}\neq 0$.
We now calculate $f(\bm{A})$ in two different ways.
Since $g(\bm{A})=0$ for $g\in \mathcal{G}$,
one can see $f(\bm{A})=0$.
On the other hand, by Lemma~\ref{lem:generalized_iii},
we have
\[
f(\bm{A})=c_\alpha \bm{A}^\alpha +\sum_{\substack{\beta\in \mathcal{D}\\ \beta < \alpha}} c'_{\beta} \bm{A}^\beta,
\]
where each $c'_\beta$ is a linear combination of $c_\gamma$ with $\gamma < \alpha$.
By Lemma~\ref{lem:BMalg}, the set $\{\bm{A}^\beta \mid \beta \in \mathcal{D},\  \beta \le  \alpha\}$
is linearly independent.
This implies that $c_\alpha =0$.
This is a contradiction. Hence, we have $\alpha \notin \mathcal{D}$.
By Lemma~\ref{lem:Dout}, there exist $\beta_0 \in \MD(\mathcal{G})$ and $\gamma_0 \in \N^\ell$
such that $\alpha = \beta_0+\gamma_0$.
This means $\LT(f)=c_\alpha \bm{x}^{\gamma_0} \LT(g)$ for some $g\in \mathcal{G}$.
Therefore, $\mathcal{G}$ is a Gr\"obner basis of $I$.

(ii)
Firstly, we show $\N^\ell \setminus \mathcal{D}\subset \MD(I)$.
Take $\alpha \in \N^\ell \setminus \mathcal{D}$.
By Lemma~\ref{lem:Dout}, there exist $\beta_0 \in \MD(\mathcal{G})$ and $\gamma_0 \in \N^\ell$
such that $\alpha = \beta_0+\gamma_0$.
Since $\mathcal{G}$ is a Gr\"obner basis of $I$ and by \eqref{eq:MD(I)},
we have $\alpha \in \MD(I)$.

Next, we show $\MD(I) \subset \N^\ell \setminus \mathcal{D}$.
Take $\alpha \in \MD(I)$.
Since $\mathcal{G}$ is a Gr\"obner basis of $I$ and by \eqref{eq:MD(I)},
there exist $\beta \in \MD(\mathcal{G})$ and $\gamma\in \N^\ell$ such that
$\alpha=\gamma + \beta$.
Assume $\alpha \in \mathcal{D}$.
Since the $i$-th entry of $\alpha -\beta$ is equal to the $i$-th entry of $\gamma$
for $i=1,2,\ldots ,\ell$,
all entries of $\alpha -\beta$ are nonnegative.
Then by (i) of Definition~\ref{df:abPpoly}, we have $\beta \in \mathcal{D}$.
This is a contradiction for $g\in \mathcal{G}$.
Hence, we have $\alpha \notin \mathcal{D}$.
This implies $\mathcal{D}\subset \N^\ell \setminus \MD(I)$.

(iii)
Consider the homomorphism $\Phi \colon \C [\bm{x}]/I \to \mathfrak{A}$
defined by $\Phi([f]):=f(\bm{A})$.
Since any $q\in I$ satisfies $q(\bm{A})=0$,  $\Phi$ is well-defined.
By Proposition~\ref{pro:quotient_ring} and (ii) of Proposition~\ref{prop:A=C[x]/I},
the set $\{\bm{x}^\alpha \mid \alpha \in \mathcal{D}\}$ is a basis of $\C [\bm{x}]/I$.
On the other hand, by Lemma~\ref{lem:BMalg}, the set $\{\bm{A}^\alpha \mid \alpha \in \mathcal{D}\}$
is a basis of $\mathfrak{A}$.
Hence, $\Phi$ is bijective.
Moreover, since 
\[
\Phi([f][g])=\Phi([fg])=fg(\bm{A})=f(\bm{A})g(\bm{A})=\Phi([f])\Phi([g])
\]
holds,
the homomorphism $\Phi$ is an isomorphism as algebra.
\end{proof}

For the new definition (Definition~\ref{df:abPpoly}),
we also have similar results of Proposition~2.4,
Lemma~2.5 and
Proposition~2.6
in \cite{bi}.

\begin{pro}[cf.~Proposition~2.4 of \cite{bi}]
    \label{prop:vij_unique}
    Let $\mathfrak{X}$ be an $\ell$-variate $P$-polynomial association scheme
    on $\mathcal{D}$ with respect to a monomial order $\le$.
    Then, for all $\alpha\in \mathcal{D}$,
    the polynomial $v_\alpha(\bm{x})$ satisfying equation
    \eqref{eq:A=vij} is unique.
\end{pro}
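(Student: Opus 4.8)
The plan is to reduce the claim to the linear independence of the monomials $\{\bm{A}^\beta \mid \beta \in \mathcal{D}\}$ inside the Bose-Mesner algebra, which is precisely the content of Lemma~\ref{lem:BMalg}. So the proof will be very short once that lemma is available; the only care needed is to make sure the difference of two candidate polynomials genuinely lies in the span of $\{\bm{x}^\beta \mid \beta \in \mathcal{D}\}$.

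Concretely, I would suppose that $v_\alpha(\bm{x})$ and $\tilde v_\alpha(\bm{x})$ both satisfy condition $(ii)$ of Definition~\ref{df:abPpoly} for the same $\alpha \in \mathcal{D}$; in particular $A_\alpha = v_\alpha(\bm{A}) = \tilde v_\alpha(\bm{A})$, and every monomial occurring in $v_\alpha$ or in $\tilde v_\alpha$ has exponent vector in $\mathcal{D}$. Set $h(\bm{x}) := v_\alpha(\bm{x}) - \tilde v_\alpha(\bm{x})$. Then $h$ is again a $\C$-linear combination of monomials $\bm{x}^\beta$ with $\beta \in \mathcal{D}$, and substituting $\bm{A}$ yields $h(\bm{A}) = A_\alpha - A_\alpha = 0$. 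Writing $h(\bm{x}) = \sum_{\beta \in \mathcal{D}} c_\beta \bm{x}^\beta$, this says $\sum_{\beta \in \mathcal{D}} c_\beta \bm{A}^\beta = 0$. By Lemma~\ref{lem:BMalg}, the family $\{\bm{A}^\beta \mid \beta \in \mathcal{D}\}$ is a basis of the Bose-Mesner algebra of $\mathfrak{X}$, hence linearly independent over $\C$; therefore $c_\beta = 0$ for all $\beta \in \mathcal{D}$, so $h = 0$ and $v_\alpha = \tilde v_\alpha$, proving uniqueness.

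I do not expect a genuine obstacle here: all the real work has already been done in establishing Lemma~\ref{lem:BMalg} (and, behind it, Lemma~\ref{lem:generalized_iii} and Definition~\ref{df:abPpoly}$(ii)$). The one point to state carefully is that both candidate polynomials are supported on $\mathcal{D}$, so their difference lies in $\Span\{\bm{x}^\beta \mid \beta \in \mathcal{D}\}$ and the linear independence of the corresponding $\bm{A}^\beta$ applies. Alternatively, one can phrase the same argument through the algebra isomorphism $\mathfrak{A} \cong \C[\bm{x}]/I$ of Proposition~\ref{prop:A=C[x]/I}$(iii)$ together with the fact, from Proposition~\ref{pro:quotient_ring} and Proposition~\ref{prop:A=C[x]/I}$(ii)$, that $\{\bm{x}^\beta \mid \beta \in \mathcal{D}\}$ is a basis of $\C[\bm{x}]/I$: then $h(\bm{A}) = 0$ forces $[h] = 0$ in $\C[\bm{x}]/I$, and since $h$ is already supported on $\mathcal{D}$, we get $h = 0$.
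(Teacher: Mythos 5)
Your proof is correct and follows essentially the same route as the paper: both reduce uniqueness to the linear independence of $\{\bm{A}^\beta \mid \beta \in \mathcal{D}\}$ established in Lemma~\ref{lem:BMalg}. Your write-up is in fact slightly more careful than the paper's in noting explicitly that the difference of the two candidate polynomials is supported on $\mathcal{D}$, so the lemma applies directly.
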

\begin{proof}
Suppose now that there is another
polynomial $v'_\alpha(\bm{x}) \neq v_\alpha(\bm{x})$ of multidegree $\alpha$
such that $A_\alpha = v'_\alpha(\bm{A})$.
Since the monomials $\bm{x}^\beta$ are linearly independent by Lemma~\ref{lem:BMalg},
this implies that there is a linear relation between the matrices
$\bm{A}^\beta$
for $\beta\in \mathcal{D}$,
which contradicts their linear independence.
\end{proof}

\begin{Rem}
    By Proposition~\ref{prop:vij_unique},
    $v_{\epsilon_i}(\bm{A})=A_{\epsilon_i}$
    and $v_{\epsilon_i}(\bm{x})=x_i$ hold.
\end{Rem}

\begin{lem}[cf.~Lemma~2.5 of \cite{bi}]
\label{lem:recur}
Let $\{A_\alpha\}_{\alpha\in \mathcal{D}}$ be the adjacency matrices of
an $\ell$-variate $P$-polynomial association scheme $\mathfrak{X}$ with respect to a monomial order $\le$ on $\mathcal{D}$.
For $i=1,2,\ldots ,\ell$ and $\alpha\in \mathcal{D}$,
we have
\begin{eqnarray}
    \label{eq:xvi-1j}
    A_{\epsilon_i} A_{\alpha} = 
    \sum_{\substack{\beta\in \mathcal{D}, \\ \beta\le \alpha+\epsilon_i}}
    p_{\epsilon_i,\alpha}^\beta A_\beta.
\end{eqnarray}
Moreover, if $\alpha+\epsilon_i \in \mathcal{D}$, then $p_{\epsilon_i,\alpha}^{\alpha + \epsilon_i} \neq 0$ holds.
\end{lem}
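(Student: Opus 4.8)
The plan is to work with the two bases $\{\bm{A}^\beta\}_{\beta\in\mathcal{D}}$ and $\{A_\beta\}_{\beta\in\mathcal{D}}$ of the Bose-Mesner algebra $\mathfrak{A}$ (both bases by Lemma~\ref{lem:BMalg}), expanding $A_{\epsilon_i}A_\alpha$ in the monomial basis first and then changing basis. Write $A_\alpha=v_\alpha(\bm{A})=\sum_{\gamma\in\mathcal{D},\,\gamma\le\alpha}c_\gamma\bm{A}^\gamma$ with $c_\alpha\neq 0$, using Definition~\ref{df:abPpoly}(ii) (so $\MD(v_\alpha)=\alpha$ and every monomial of $v_\alpha$ has exponent in $\mathcal{D}$). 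Multiplying by $A_{\epsilon_i}$ gives $A_{\epsilon_i}A_\alpha=\sum_{\gamma\le\alpha}c_\gamma\bm{A}^{\gamma+\epsilon_i}$. By Lemma~\ref{lem:generalized_iii} each $\bm{A}^{\gamma+\epsilon_i}$ is a linear combination of $\{\bm{A}^\delta\mid\delta\in\mathcal{D},\ \delta\le\gamma+\epsilon_i\}$, and $\gamma\le\alpha$ forces $\gamma+\epsilon_i\le\alpha+\epsilon_i$ by \eqref{eq:slide}; hence $A_{\epsilon_i}A_\alpha$ already lies in $\Span\{\bm{A}^\delta\mid\delta\in\mathcal{D},\ \delta\le\alpha+\epsilon_i\}$.

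To reach \eqref{eq:xvi-1j} I would pass to the basis $\{A_\beta\}_{\beta\in\mathcal{D}}$: enumerating $\mathcal{D}$ in $\le$-increasing order, the relations $A_\beta=v_\beta(\bm{A})$ present the change-of-basis matrix from $\{\bm{A}^\gamma\}_{\gamma\in\mathcal{D}}$ to $\{A_\beta\}_{\beta\in\mathcal{D}}$ as triangular with nonzero diagonal (the leading coefficients of the $v_\beta$), hence invertible with triangular inverse, so each $\bm{A}^\delta$ is a linear combination of $\{A_\beta\mid\beta\in\mathcal{D},\ \beta\le\delta\}$. Substituting and using transitivity of $\le$ gives $A_{\epsilon_i}A_\alpha=\sum_{\beta\in\mathcal{D},\,\beta\le\alpha+\epsilon_i}p^\beta_{\epsilon_i,\alpha}A_\beta$; these coefficients are the intersection numbers after the relabeling. (One could instead run the same computation inside $\C[\bm{x}]/I$, reducing $x_i v_\alpha(\bm{x})$ modulo the Gr\"obner basis $\mathcal{G}$ of Proposition~\ref{prop:A=C[x]/I}.)

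For the ``moreover'' part, assume $\alpha+\epsilon_i\in\mathcal{D}$ and compare on both sides of \eqref{eq:xvi-1j} the coefficient of the basis vector $\bm{A}^{\alpha+\epsilon_i}$, after re-expanding everything in $\{\bm{A}^\delta\}_{\delta\in\mathcal{D}}$. On the left, in $\sum_{\gamma\le\alpha}c_\gamma\bm{A}^{\gamma+\epsilon_i}$ the summands with $\gamma<\alpha$ expand (Lemma~\ref{lem:generalized_iii}) into $\bm{A}^\delta$ with $\delta\le\gamma+\epsilon_i<\alpha+\epsilon_i$---the strictness coming from \eqref{eq:slide} together with injectivity of $\gamma\mapsto\gamma+\epsilon_i$---so they never produce $\bm{A}^{\alpha+\epsilon_i}$, whereas the summand $\gamma=\alpha$ contributes exactly $c_\alpha\bm{A}^{\alpha+\epsilon_i}$ since $\alpha+\epsilon_i\in\mathcal{D}$; thus the coefficient is $c_\alpha\neq 0$. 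On the right, $A_\beta=v_\beta(\bm{A})$ involves $\bm{A}^{\alpha+\epsilon_i}$ only if $\alpha+\epsilon_i\le\beta$, which with $\beta\le\alpha+\epsilon_i$ forces $\beta=\alpha+\epsilon_i$, contributing the nonzero leading coefficient of $v_{\alpha+\epsilon_i}$ times $p^{\alpha+\epsilon_i}_{\epsilon_i,\alpha}$. Equating the two yields $p^{\alpha+\epsilon_i}_{\epsilon_i,\alpha}\neq 0$. The bulk of the work is routine bookkeeping with the monomial order; the only points needing care are the triangularity of the change of basis between $\{\bm{A}^\delta\}$ and $\{A_\beta\}$, and the check that no lower-degree summand contributes to the $\bm{A}^{\alpha+\epsilon_i}$-coefficient, which is exactly what the strict form of \eqref{eq:slide} guarantees.
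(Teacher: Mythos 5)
Your proposal is correct and follows essentially the same route as the paper: expand $A_\alpha=v_\alpha(\bm{A})$ in the monomial basis, multiply by $A_{\epsilon_i}$, reduce each $\bm{A}^{\gamma+\epsilon_i}$ via condition (iii) (your Lemma~\ref{lem:generalized_iii}), and convert back using the triangular change of basis between $\{\bm{A}^\beta\}$ and $\{A_\beta\}$, which is exactly the paper's identity \eqref{eq:spanvij}. Your coefficient comparison at $\bm{A}^{\alpha+\epsilon_i}$ for the ``moreover'' part is the same bookkeeping the paper does when it tracks $c'_{\alpha+\epsilon_i}=c_\alpha$ and concludes $c''_{\alpha+\epsilon_i}\neq 0$.
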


\begin{proof}
Let $\{v_\alpha\}_{\alpha\in \mathcal{D}}$ be the $\ell$-variate polynomials associated to $\mathfrak{X}$.
Fix $\alpha \in \mathcal{D}$.
Firstly, we prove
\begin{equation}
    \label{eq:spanvij} 
    \Span\{A_\beta \mid \beta\in \mathcal{D},\   \beta\le \alpha\}
    =
    \Span\{\bm{A}^\beta \mid \beta\in \mathcal{D},\   \beta\le \alpha\}.
\end{equation}
Take $A_\beta$ in (LHS) of \eqref{eq:spanvij}.
By (ii) of Definition~\ref{df:abPpoly}, we have $A_\beta=v_\beta(\bm{A})$.
Since $v_\beta(\bm{x})$ is a polynomial of multidegree $\beta$
on $\mathcal{D}$, 
we have
\[
    v_{\beta}(\bm{A}) =
    \sum_{\substack{\gamma\in \mathcal{D}, \\ \gamma\le \beta}}
    c_\gamma \bm{A}^\gamma.
\]
Since $\le$ is a total order, $\gamma\le \alpha$ holds.
Therefore, $v_\beta(\bm{A})$ belongs to (RHS) of \eqref{eq:spanvij}.
This implies that (LHS) $\subset $ (RHS) holds.
Moreover, since
\[
\{A_\beta \mid \beta\in \mathcal{D},\   \beta\le \alpha \}
\]
is linearly independent in the Bose-Mesner algebra of $\mathfrak{X}$,
we have
\[
    \dim \Span\{v_\beta(\bm{A}) \mid \beta\in \mathcal{D},\   \beta\le \alpha \}
    =
    |\{ \beta\in \mathcal{D} \mid   \beta\le \alpha \}|.  
\]
This implies that \eqref{eq:spanvij} holds.

Put $v_{\alpha}(\bm{x})=\sum_{\substack{\beta\in \mathcal{D}, \\ \beta\le \alpha}}
c_\beta \bm{x}^\beta$ with $c_{\alpha} \neq 0$.
Then it follows
\[
    A_{\epsilon_i} A_{\alpha}
    =
    A_{\epsilon_i} v_{\alpha}(\bm{A})
    =
    A_{\epsilon_i} \sum_{\substack{\beta\in \mathcal{D}, \\ \beta\le \alpha}}
    c_\beta \bm{A}^\beta
    =
    \sum_{\substack{\beta\in \mathcal{D}, \\ \beta \le \alpha}}
    c_\beta \bm{A}^{\beta+\epsilon_i}.
\]
By (iii) of Definition~\ref{df:abPpoly},
$\bm{A}^{\beta+\epsilon_i}$ is a linear combination of $\bm{A}^\gamma$ 
for $\gamma \in \mathcal{D}$ with $\gamma\le \beta+\epsilon_i$.
Moreover, by $\beta+\epsilon_i \le \alpha +\epsilon_i$ and \eqref{eq:spanvij},
we have
\[
    A_{\epsilon_i} A_{\alpha}=
    \sum_{\substack{\beta\in \mathcal{D}, \\ \beta \le \alpha+\epsilon_i}}
    c'_\beta \bm{A}^{\beta}
    =
    \sum_{\substack{\beta\in \mathcal{D}, \\ \beta \le \alpha+\epsilon_i}}
    c''_\beta A_{\beta}
\]
for some $c'_\beta , c''_\beta \in \C$.
Comparing the definition of the intersection numbers and knowing that the matrices
$A_\beta$ are independent,
$c''_\beta=p^\beta_{\epsilon_i,\alpha}$ holds.

Furthermore, assuming that $\alpha + \epsilon_i \in \mathcal{D}$, 
following the above transformation, 
taking care to $c_\alpha \neq 0$, 
we can see $c'_{\alpha + \epsilon_i}= c_\alpha$ and $c''_{\alpha + \epsilon_i} \neq 0$.
This means that $p^{\alpha+\epsilon_i}_{\epsilon_i,\alpha}$ is nonzero.
\end{proof}

\begin{pro}[cf.~Proposition~2.6 of \cite{bi}]
\label{prop:P-TFAE}
Let $\mathcal{D}\subset \N^\ell$ having $\epsilon_1,\epsilon_2,\ldots ,\epsilon_\ell$ and
$\mathfrak{X} = (X,\{A_\alpha\}_{\alpha\in \mathcal{D}})$
be a commutative association scheme.
Then the statements (i) and (ii) are equivalent:
\begin{enumerate}[label=$(\roman*)$]
    \item $\mathfrak{X}$ is
    an $\ell$-variate $P$-polynomial association scheme 
    on $\mathcal{D}$ with respect to a monomial order $\le$;
    \item
    the condition (i) of Definition~\ref{df:abPpoly} holds for $\mathcal{D}$
    and the intersection numbers satisfy,
    for each $i=1,2,\ldots ,\ell$ and each $\alpha \in \mathcal{D}$,
    $p^\beta_{\epsilon_i, \alpha} \neq 0$ for $\beta\in \mathcal{D}$ implies $\beta \le \alpha+\epsilon_i$. 
    Moreover, if $\alpha+\epsilon_i\in \mathcal{D}$, then $p^{\alpha+\epsilon_i}_{\epsilon_i, \alpha} \neq 0$ holds.
\end{enumerate}
\end{pro}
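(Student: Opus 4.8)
The plan is to prove the two implications separately; $(i)\Rightarrow(ii)$ is essentially a restatement of earlier results, while $(ii)\Rightarrow(i)$ needs a short induction along the monomial order. For the easy direction $(i)\Rightarrow(ii)$: if $\mathfrak{X}$ is $\ell$-variate $P$-polynomial on $\mathcal{D}$, then condition $(i)$ of Definition~\ref{df:abPpoly} holds by definition, and the two assertions about the intersection numbers are exactly what Lemma~\ref{lem:recur} provides: reading off \eqref{eq:xvi-1j} shows that $p^{\beta}_{\epsilon_i,\alpha}\neq0$ forces $\beta\le\alpha+\epsilon_i$, and the ``moreover'' clause of that lemma gives $p^{\alpha+\epsilon_i}_{\epsilon_i,\alpha}\neq0$ whenever $\alpha+\epsilon_i\in\mathcal{D}$.

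For the direction $(ii)\Rightarrow(i)$ the engine is the following claim, which I would prove by induction on $\le$ (legitimate since $\le$ is a well-ordering), mirroring the proof of Lemma~\ref{lem:generalized_iii}: \emph{for every $\gamma\in\N^{\ell}$ one has $\bm{A}^{\gamma}=\sum_{\beta\in\mathcal{D},\ \beta\le\gamma}c_{\beta}A_{\beta}$ for suitable $c_{\beta}\in\C$, and if moreover $\gamma\in\mathcal{D}$ then $c_{\gamma}\neq0$.} For $\gamma=o$ this is clear once one notes $\bm{A}^{o}=I_{X}=A_{o}$ and that $o$ is the $\le$-minimum. For $\gamma>o$, choose $i$ with the $i$-th coordinate of $\gamma$ positive, so $\gamma-\epsilon_{i}\in\N^{\ell}$ and $\gamma-\epsilon_{i}<\gamma$ by \eqref{eq:b_le_a}; apply the induction hypothesis to $\gamma-\epsilon_{i}$, multiply through by $A_{\epsilon_{i}}$, and expand each product $A_{\epsilon_{i}}A_{\beta}$ through its intersection numbers. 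Every $A_{\delta}$ occurring has $\delta\in\mathcal{D}$ and satisfies $\delta\le\beta+\epsilon_{i}\le\gamma$ by the support hypothesis in $(ii)$ combined with \eqref{eq:slide}, which gives the support bound. If in addition $\gamma\in\mathcal{D}$, then $\gamma-\epsilon_{i}\in\mathcal{D}$ by condition $(i)$ of Definition~\ref{df:abPpoly} and the induction gives $c_{\gamma-\epsilon_{i}}\neq0$; furthermore the only $\beta\le\gamma-\epsilon_{i}$ with $p^{\gamma}_{\epsilon_{i},\beta}\neq0$ is $\beta=\gamma-\epsilon_{i}$ (because $p^{\gamma}_{\epsilon_{i},\beta}\neq0$ forces $\gamma\le\beta+\epsilon_{i}$ while $\beta\le\gamma-\epsilon_{i}$ forces $\beta+\epsilon_{i}\le\gamma$), so the coefficient of $A_{\gamma}$ in $\bm{A}^{\gamma}$ equals $c_{\gamma-\epsilon_{i}}\,p^{\gamma}_{\epsilon_{i},\gamma-\epsilon_{i}}$, which is nonzero by the ``moreover'' clause of $(ii)$ applied with $\alpha=\gamma-\epsilon_{i}$.

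Granting the claim, I would finish as follows. Restricting $\gamma$ to $\mathcal{D}$ and ordering $\mathcal{D}$ by $\le$, the claim says that the coordinate matrix of $(\bm{A}^{\alpha})_{\alpha\in\mathcal{D}}$ against the basis $(A_{\alpha})_{\alpha\in\mathcal{D}}$ of the Bose--Mesner algebra $\mathfrak{A}$ is lower triangular with nonzero diagonal, hence invertible with lower-triangular inverse. Therefore $\{\bm{A}^{\alpha}\mid\alpha\in\mathcal{D}\}$ is again a basis of $\mathfrak{A}$, and inverting yields $A_{\alpha}=v_{\alpha}(\bm{A})$ with $v_{\alpha}(\bm{x})=\sum_{\beta\in\mathcal{D},\ \beta\le\alpha}d_{\beta}\bm{x}^{\beta}$ and $d_{\alpha}\neq0$; this polynomial has multidegree $\alpha$ and all of its monomials lie in $\mathcal{D}$, so condition $(ii)$ of Definition~\ref{df:abPpoly} holds. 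Condition $(i)$ holds by hypothesis. For condition $(iii)$, write $A_{\epsilon_{i}}\bm{A}^{\alpha}=\bm{A}^{\alpha+\epsilon_{i}}$, which by the claim is a $\C$-linear combination of $\{A_{\beta}\mid\beta\in\mathcal{D},\ \beta\le\alpha+\epsilon_{i}\}$; substituting $A_{\beta}=v_{\beta}(\bm{A})$, whose monomials $\bm{x}^{\delta}$ satisfy $\delta\in\mathcal{D}$ and $\delta\le\beta\le\alpha+\epsilon_{i}$, rewrites this as a linear combination of $\{\bm{A}^{\delta}\mid\delta\in\mathcal{D},\ \delta\le\alpha+\epsilon_{i}\}$, which is precisely condition $(iii)$.

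The main obstacle is the two-part induction in the middle paragraph: one has to carry the support bound and the nonvanishing of the leading coefficient through the induction at the same time and invoke the ``moreover'' clause of $(ii)$ at exactly the right step (to see that the diagonal of the change-of-basis matrix does not vanish). A minor preliminary point is checking $A_{o}=I_{X}$ from the hypotheses in $(ii)$, which holds as in Remark~\ref{rem:biP1} and is needed only for the base case of the induction.
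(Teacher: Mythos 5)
Your proof is correct, and both directions rest on the same two facts as the paper's proof: the support hypothesis on $p^{\beta}_{\epsilon_i,\alpha}$ forces a triangular relation (with respect to the total order $\le$) between the monomials $\bm{A}^{\gamma}$ and the basis $\{A_{\beta}\}_{\beta\in\mathcal{D}}$, and the ``moreover'' clause keeps the diagonal nonzero. The organization differs, though: the paper runs a simultaneous induction on conditions (ii) and (iii) of Definition~\ref{df:abPpoly}, at each step solving $A_{\epsilon_i}A_{\alpha-\epsilon_i}=p^{\alpha}_{\epsilon_i,\alpha-\epsilon_i}A_{\alpha}+(\text{lower terms})$ for $A_{\alpha}$ to construct $v_{\alpha}$ directly, whereas you prove a single forward-expansion claim for \emph{all} $\gamma\in\N^{\ell}$ (not just $\gamma\in\mathcal{D}$) and then recover conditions (ii) and (iii) in one stroke by inverting the triangular change-of-basis matrix. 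Your route buys a cleaner separation of the combinatorial induction from the linear algebra, and your antisymmetry argument --- that $p^{\gamma}_{\epsilon_i,\beta}\neq0$ together with $\beta\le\gamma-\epsilon_i$ forces $\beta=\gamma-\epsilon_i$, so the coefficient of $A_{\gamma}$ is exactly $c_{\gamma-\epsilon_i}p^{\gamma}_{\epsilon_i,\gamma-\epsilon_i}$ --- is a tidier justification of the nonvanishing diagonal than the paper's bookkeeping of $c'_{\beta}$ and $c''_{\beta}$. The one point to be aware of is the base case $A_{o}=I_X$: Remark~\ref{rem:biP1} derives this from statement (i), not from statement (ii), so strictly one should either take the labeling $A_o=I_X$ as part of the given indexing or extract it from the hypothesis on the intersection numbers; the paper glosses over this in exactly the same way (``it is immediate for $\alpha=o$''), so this is not a defect relative to the original.
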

\begin{proof}
(i) $\Longrightarrow$ (ii):
from Lemma~\ref{lem:recur}, 
(ii) follows.

(ii) $\Longrightarrow$ (i):
by Remark~\ref{rem:biP1}, $\mathcal{D}$ contains $o$.
We use induction on $\le$ to check that
(ii) and (iii) of Definition~\ref{df:abPpoly} hold.
It is immediate for $\alpha=o$.
Now assume that $\alpha > o$.
Then there exists $i$ such that $n_i\ge 1$.
We note that $\alpha-\epsilon_i \in \mathcal{D}$ by (i) of Definition~\ref{df:abPpoly}.
By the assumption (ii), we have
\begin{equation}
    \label{eq:induction1}
    A_{\epsilon_i} A_{\alpha-\epsilon_i}
    =
    p^{\alpha}_{\epsilon_i,\alpha-\epsilon_i}A_{\alpha}
    +\sum_{\substack{\beta\in \mathcal{D}, \\ \beta < \alpha}}
    p^{\beta}_{\epsilon_i,\alpha-\epsilon_i} A_{\beta}. 
\end{equation}
For the adjacency matrices
$\{A_{\beta} \mid \beta\in \mathcal{D},\ \beta< \alpha  \}$
in the right-hand side of \eqref{eq:induction1},
by the induction hypothesis of (ii) of Definition~\ref{df:abPpoly},
these are expressed as polynomials of the monomials $\{\bm{A}^{\beta} \mid \beta\in \mathcal{D},\ \beta< \alpha \}$.
Also, by the induction hypothesis of (ii) of Definition~\ref{df:abPpoly},
$A_{\alpha-\epsilon_i}$ is expressed as a polynomial
$\sum_{\substack{\beta\in \mathcal{D}, \\ \beta\le \alpha -\epsilon_i}}    
c_{\beta} \bm{A}^{\beta}$ with $c_{\alpha -\epsilon_i} \neq 0$.
Moreover, by the induction hypothesis of (iii) of Definition~\ref{df:abPpoly},
we have
\[
    A_{\epsilon_i} A_{\alpha-\epsilon_i}
    =
    \sum_{\substack{\beta\in \mathcal{D}, \\ \beta\le \alpha -\epsilon_i}}    
    c_{\beta} \bm{A}^{\beta + \epsilon_i}
    =
\sum_{\substack{\beta\in \mathcal{D}, \\ \beta< \alpha -\epsilon_i}}    
c_{\beta} \bm{A}^{\beta + \epsilon_i} + c_{\alpha -\epsilon_i} \bm{A}^{\alpha}
=
\sum_{\substack{\beta\in \mathcal{D}, \\ \beta< \alpha }}    
c'_{\beta} \bm{A}^{\beta} + c_{\alpha -\epsilon_i} \bm{A}^{\alpha}
\]
for some $c'_\beta \in \C$.
Therefore,
since $p^\alpha_{\epsilon_i,\alpha-\epsilon_i}\neq 0$ because of $\alpha = (\alpha -\epsilon_i)+\epsilon_i \in \mathcal{D}$, 
we have that $A_\alpha$ is expressed as a polynomial
$v_\alpha(\bm{x})$ evaluated in $\{ A_{\epsilon_j} \}^\ell_{j=1}$.
Moreover, the fact that $v_{\alpha}(\bm{x})$ is 
a polynomial of multidegree $\alpha$
on $\mathcal{D}$
follows now easily from the transitivity of $\le$.

Finally, we prove that (iii) of Definition~\ref{df:abPpoly} holds.
By the induction hypothesis of (ii) of Definition~\ref{df:abPpoly}
and the above argument,
for $\beta \in \mathcal{D}$ with $\beta \le \alpha$,
$A_\beta$ is expressed as a 
polynomial of multidegree $\beta$ evaluated in $\{ A_{\epsilon_j} \}^\ell_{j=1}$.
This implies that \eqref{eq:spanvij} holds.
Hence, $\bm{A}^\alpha$ is expressed as a linear combination of 
$A_\beta$
for $\beta \in \mathcal{D}$ with $\beta \le \alpha$.
Thus, by the assumption (ii),
we have
\[
A_{\epsilon_i} \bm{A}^\alpha=
 \sum_{\substack{\beta\in \mathcal{D}, \\ \beta\le \alpha}} c_\beta A_{\epsilon_i} A_\beta
=
\sum_{\substack{\beta\in \mathcal{D}, \\ \beta\le \alpha}} c_\beta 
\sum_{\substack{\gamma\in \mathcal{D}, \\ \gamma\le \beta+\epsilon_i}} p^\gamma_{\epsilon_i,\beta} A_\gamma
\]
for some $c_\beta \in \C$.
From the transitivity of $\le$, we have $\gamma \le \alpha +\epsilon_i$
for $\gamma$ on the right-hand side in the above equation.
Then, $A_{\epsilon_i} \bm{A}^\alpha$ is expressed as a linear combination of $A_\gamma$
for $\gamma \in \mathcal{D}$ with $\gamma \le \alpha +\epsilon_i$.
Therefore, using \eqref{eq:spanvij} again,
we have that
$A_{\epsilon_i} \bm{A}^\alpha$ is expressed as a linear combination of $\bm{A}^\gamma$
for $\gamma \in \mathcal{D}$ with $\gamma \le \alpha +\epsilon_i$.
\end{proof}

\begin{Rem}
    If $\mathfrak{X}$ is symmetric,
    then the intersection numbers satisfy the following
    symmetry property: $p^\beta_{\alpha ,\gamma}=0$ $\Longleftrightarrow$ $p^\gamma_{\alpha,\beta}=0$.
    Therefore, in this case,
    (ii) in Proposition~\ref{prop:P-TFAE} are replaced by the following:
    $p^\beta_{\epsilon_i, \alpha} \neq 0$ for $\beta\in \mathcal{D}$ implies $\beta \le \alpha+\epsilon_i$ and $\alpha \le \beta +\epsilon_i$. 
    Moreover, $p^{\alpha+\epsilon_i}_{\epsilon_i, \alpha} \neq 0$ holds if $\alpha+\epsilon_i\in \mathcal{D}$
    and
    $p^{\alpha}_{\epsilon_i, \alpha-\epsilon_i} \neq 0$ holds if $\alpha-\epsilon_i\in \mathcal{D}$.
\end{Rem}

We have similar result of Proposition~\ref{prop:P-TFAE}
as follows:
\begin{pro}
    \label{prop:Q-TFAE}
    Let $\mathcal{D}^\ast\subset \N^\ell$  having $\epsilon_1,\epsilon_2,\ldots ,\epsilon_\ell$ and
    $\mathfrak{X}$
    be a commutative association scheme
    with the primitive idempotents $\{E_\alpha\}_{\alpha\in \mathcal{D}^\ast}$
    indexed by $\mathcal{D}^\ast$.
    The statements (i) and (ii) are equivalent:
    \begin{enumerate}[label=$(\roman*)$]
        \item $\mathfrak{X}$ is
        an $\ell$-variate $Q$-polynomial association scheme 
        on $\mathcal{D}^\ast$ with respect to $\le$;
        \item
        the condition (i) of Definition~\ref{df:abQpoly} holds for $\mathcal{D}^\ast$
        and the Krein numbers satisfy,
        for each $i=1,2,\ldots ,\ell$ and each $\alpha \in \mathcal{D}^\ast$,
        $q^\beta_{\epsilon_i, \alpha} \neq 0$ for $\beta\in \mathcal{D}^\ast$ implies $\beta \le \alpha+\epsilon_i$. 
        Moreover, if $\alpha+\epsilon_i\in \mathcal{D}^\ast$, then $q^{\alpha+\epsilon_i}_{\epsilon_i, \alpha} \neq 0$ holds.
    \end{enumerate}
\end{pro}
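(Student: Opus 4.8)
The plan is to recognize Proposition~\ref{prop:Q-TFAE} as the formal dual of Proposition~\ref{prop:P-TFAE} and to transport the proof verbatim. Equip the Bose-Mesner algebra $\mathfrak{A}$ with the Hadamard product $\circ$ in place of the ordinary product; then $\mathfrak{A}$ is again a commutative $\C$-algebra, its identity element is the all-ones matrix $J_X = |X|E_o$, the rescaled idempotents $\{|X|E_\alpha\}_{\alpha\in\mathcal{D}^\ast}$ form a basis (because $\{E_\alpha\}_{\alpha\in\mathcal{D}^\ast}$ does), and the structure constants of this basis are precisely the Krein numbers: $(|X|E_\alpha)\circ(|X|E_\beta)=\sum_\gamma q^\gamma_{\alpha\beta}\,|X|E_\gamma$. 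Under the dictionary $A_\alpha\leftrightarrow|X|E_\alpha$, matrix product $\leftrightarrow$ Hadamard product, $p^\gamma_{\alpha\beta}\leftrightarrow q^\gamma_{\alpha\beta}$, $I_X\leftrightarrow J_X$, the defining conditions of an $\ell$-variate $P$-polynomial scheme (Definition~\ref{df:abPpoly}) turn into those of an $\ell$-variate $Q$-polynomial scheme (Definition~\ref{df:abQpoly}), and statement (ii) of Proposition~\ref{prop:P-TFAE} turns into statement (ii) here.

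First I would record the dual versions of the three lemmas feeding Proposition~\ref{prop:P-TFAE}. Writing $\bm{F} := (|X|E_{\epsilon_1},\dots,|X|E_{\epsilon_\ell})$ and $\bm{F}^{\circ\alpha} := (|X|E_{\epsilon_1})^{\circ n_1}\circ\cdots\circ(|X|E_{\epsilon_\ell})^{\circ n_\ell}$ for $\alpha=(n_1,\dots,n_\ell)$, Lemma~\ref{lem:generalized_iii} dualizes to: for every $\alpha\in\N^\ell$, $\bm{F}^{\circ\alpha}$ is a linear combination of $\{\bm{F}^{\circ\beta}\mid\beta\in\mathcal{D}^\ast,\ \beta\le\alpha\}$; the proof is the same induction on $\le$, using condition (iii) of Definition~\ref{df:abQpoly}, the equality $|X|E_o=J_X$, and $o\in\mathcal{D}^\ast$ (the $Q$-analogue of Remark~\ref{rem:biP1}, valid as stated in the excerpt). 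Lemma~\ref{lem:BMalg} dualizes to: $\{\bm{F}^{\circ\alpha}\mid\alpha\in\mathcal{D}^\ast\}$ is a basis of $\mathfrak{A}$, by the dimension count $|\mathcal{D}^\ast|=|\mathcal{J}|=\dim\mathfrak{A}$. Lemma~\ref{lem:recur} dualizes to $(|X|E_{\epsilon_i})\circ(|X|E_\alpha)=\sum_{\beta\in\mathcal{D}^\ast,\ \beta\le\alpha+\epsilon_i} q^\beta_{\epsilon_i,\alpha}\,|X|E_\beta$ with $q^{\alpha+\epsilon_i}_{\epsilon_i,\alpha}\neq0$ whenever $\alpha+\epsilon_i\in\mathcal{D}^\ast$; the proof of Lemma~\ref{lem:recur} carries over word for word, with the definition of the Krein numbers taking the place of that of the intersection numbers.

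With these in hand, the two implications run exactly as in Proposition~\ref{prop:P-TFAE}. For (i) $\Rightarrow$ (ii), condition (i) of Definition~\ref{df:abQpoly} is part of the hypothesis and the Krein-number condition is the dual Lemma~\ref{lem:recur}. For (ii) $\Rightarrow$ (i), one repeats the induction on $\le$: for $\alpha>o$ choose $i$ with $n_i\ge1$, so $\alpha-\epsilon_i\in\mathcal{D}^\ast$ by condition (i); expand $(|X|E_{\epsilon_i})\circ(|X|E_{\alpha-\epsilon_i})$ via the Krein hypothesis, substitute the inductive expressions of the lower $|X|E_\beta$ as Hadamard polynomials in $\bm{F}$, solve for $|X|E_\alpha$ using $q^{\alpha}_{\epsilon_i,\alpha-\epsilon_i}\neq0$ (valid since $\alpha=(\alpha-\epsilon_i)+\epsilon_i\in\mathcal{D}^\ast$), and finally deduce condition (iii) of Definition~\ref{df:abQpoly} from the dual of \eqref{eq:spanvij} together with the transitivity of $\le$.

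I do not expect a genuine obstacle here; the only point needing care is the one flagged above, namely checking that the Hadamard algebra $(\mathfrak{A},\circ)$ really has the abstract structure exploited in the $P$-case --- a commutative algebra with identity $J_X=|X|E_o$ whose rescaled-idempotent basis has the Krein numbers as structure constants --- so that Remark~\ref{rem:biP1} and the proofs of Lemmas~\ref{lem:generalized_iii}, \ref{lem:BMalg}, \ref{lem:recur} and of Proposition~\ref{prop:P-TFAE} apply mutatis mutandis. These are all standard facts about association schemes (Subsection~\ref{sec:AS}); once they are in place, the argument is complete, and in fact one could formalize a single ``duality'' statement interchanging the ordinary and Hadamard structures and then obtain Proposition~\ref{prop:Q-TFAE} from Proposition~\ref{prop:P-TFAE} as an immediate corollary.
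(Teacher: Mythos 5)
Your proposal is correct and matches the paper's intent exactly: the paper states Proposition~\ref{prop:Q-TFAE} without proof, introducing it only with the remark that it is the ``similar result'' to Proposition~\ref{prop:P-TFAE}, i.e.\ it is obtained by precisely the dualization you describe (Hadamard product for matrix product, rescaled primitive idempotents for adjacency matrices, Krein numbers for intersection numbers, $J_X$ for $I_X$). Your write-up simply makes explicit the mutatis mutandis argument the authors leave to the reader, so there is nothing to add.
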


\section{New examples of bivariate polynomial association schemes}
\label{sec:3}

\subsection{Association schemes obtained from attenuated spaces}
\label{sec:attenuated}

Let us recall the definition of the association schemes obtained from attenuated spaces.
For a prime power $q$, a positive integer $n$ and a nonnegative integer $l$,
fix an $l$-dimensional subspace $W$ of the $(n+l)$-dimensional vector space $\F_q^{n+l}$
over the finite field $\F_q$ of $q$ elements.
The corresponding \emph{attenuated space} associated with $\F_q^{n+l}$ and $W$
is the collection of all subspaces of $\F_q^{n+l}$ intersecting trivially with $W$.
For a positive integer $m$ with $m\le n$,
let $X$ be the set of $m$-dimensional subspaces of the attenuated space associated with $\F_q^{n+l}$ and $W$.
Let 
\[
\mathcal{D}:=\{(i,j)\mid 0\le i \le \min\{m,n-m\},\  0\le j\le \min\{m-i,l\}\}
\]
and $\mathcal{R}\colon X \times X \to \mathcal{D}$ is defined by
$\mathcal{R}(V,V')=(i,j)$ if
\[
\dim V/W\cap V'/W=m-i \  \text{and}\  \dim V\cap V'=(m-i)-j,
\]
where $V/W$ stand for $(V+W)/W$ simply.
Then $\mathfrak{X}=(X,\mathcal{R})$ is a symmetric association scheme.
For details of the association schemes obtained from attenuated spaces, 
see Bernard et al.~\cite{bi}, 
Wang-Guo-Li~\cite{WGL2010}
or Kurihara~\cite{kurihara2013}
\footnote{Note that the roles of $i$ and $j$ are reversed in \cite{bi} and \cite{kurihara2013,WGL2010}.
In this paper, we adapt the notation of \cite{kurihara2013,WGL2010}.}.

Bernard et al.~\cite{bi} proved that in the case of $l\ge m$,
$\mathfrak{X}$ becomes bivariate $P$-polynomial
of type $(1,0)$ on the domain $\mathcal{D}$.
However, in the case $l<m$, the framework in their definition did not treat $\mathfrak{X}$
as a bivariate $P$-polynomial association scheme.
On the other hand, using Definition~\ref{df:abPpoly},
we can show that
$X$ becomes bivariate $P$-polynomial
on the domain $\mathcal{D}$
with respect to $\le_\mathrm{grlex}$
even if $l<m$.

\begin{theo}
\label{thm:attenuated}
Let $\mathfrak{X}$ be the association scheme obtained from attenuated spaces as above.
Then $\mathfrak{X}$ is a bivariate $P$-polynomial association scheme
on the domain $\mathcal{D}$ with respect to $\le_\mathrm{grlex}$.
\end{theo}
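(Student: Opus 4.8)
The plan is to deduce the theorem from Proposition~\ref{prop:P-TFAE}, applied with the two distinguished relations $\epsilon_1=(1,0)$ and $\epsilon_2=(0,1)$ of $\mathcal{D}\subset\N^2$ (recall $\mathfrak{X}$ is symmetric, in particular commutative, and its adjacency matrices are already indexed by $\mathcal{D}$ via $\mathcal{R}$). First, we may assume $m<n$ and $l\ge1$: otherwise every element of $\mathcal{D}$ has a zero coordinate, $\mathfrak{X}$ is one-variate --- the Grassmann scheme $J_q(n,m)$ if $l=0$, the bilinear forms scheme if $m=n$ --- and the assertion reduces to the classical metric property. Condition (i) of Definition~\ref{df:abPpoly} for $\mathcal{D}$ is immediate: if $(i,j)\in\mathcal{D}$ and $0\le i'\le i$, $0\le j'\le j$, then $i'\le\min\{m,n-m\}$ and $j'\le\min\{m-i,l\}\le\min\{m-i',l\}$, so $(i',j')\in\mathcal{D}$; and $\epsilon_1,\epsilon_2\in\mathcal{D}$. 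It then remains to verify, for $k=1,2$ and each $\alpha=(i,j)\in\mathcal{D}$, that $p^{\beta}_{\epsilon_k,\alpha}\ne0$ implies $\beta\le_{\mathrm{grlex}}\alpha+\epsilon_k$, together with $p^{\alpha+\epsilon_k}_{\epsilon_k,\alpha}\ne0$ whenever $\alpha+\epsilon_k\in\mathcal{D}$.

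For the support condition, fix $V_0,V,V'\in X$ with $\mathcal{R}(V_0,V)=(i,j)$ and $\mathcal{R}(V,V')=\epsilon_k$, and put $\mathcal{R}(V_0,V')=(r,s)$; one must show $(r,s)\le_{\mathrm{grlex}}(i,j)+\epsilon_k$. Recall $\mathcal{R}(A,B)=(u,v)$ means $\dim(A/W\cap B/W)=m-u$ and $\dim(A\cap B)=(m-u)-v$. For \emph{both} $k=1,2$ one has $\dim(V\cap V')=m-1$, so $V\cap V'$ is a hyperplane of $V$; hence $\dim\bigl((V_0\cap V)\cap V'\bigr)\ge\dim(V_0\cap V)-1$, and a fortiori $\dim(V_0\cap V')\ge\dim(V_0\cap V)-1$. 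Translating the labels, this reads $r+s\le i+j+1$: the total degree rises by at most one. If $k=2$, one moreover has $V'/W=V/W$ (an equality of $m$-dimensional subspaces forced by $\dim(V/W\cap V'/W)=m$), hence $r=i$; with $r+s\le i+j+1$ this gives $s\le j+1$, so $(r,s)\le_{\mathrm{grlex}}(i,j+1)=\alpha+\epsilon_2$. If $k=1$, one moreover has $\dim(V/W\cap V'/W)=m-1$, so $V/W\cap V'/W$ is a hyperplane of $V/W$, whence $\dim(V_0/W\cap V'/W)\ge\dim(V_0/W\cap V/W)-1$, i.e.\ $r\le i+1$. Now either $r+s<i+j+1$, in which case $(r,s)<_{\mathrm{grlex}}(i+1,j)$, or $r+s=i+j+1$, in which case the inequality $r\le i+1$ amounts to $(r,s)\le_{\mathrm{lex}}(i+1,j)$; either way $(r,s)\le_{\mathrm{grlex}}(i+1,j)=\alpha+\epsilon_1$. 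This settles the support condition for both generators.

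For the nonvanishing statement, one must exhibit, for some pair $(V_0,V)$ with $\mathcal{R}(V_0,V)=(i,j)$, a subspace $V'$ with $\mathcal{R}(V,V')=\epsilon_k$ and $\mathcal{R}(V_0,V')=\alpha+\epsilon_k$, assuming $\alpha+\epsilon_k\in\mathcal{D}$. For $k=2$ (so $j+1\le\min\{m-i,l\}$) one keeps $V'/W=V/W$ and perturbs $V$ inside the fibre of $\pi\colon\F_q^{n+l}\to\F_q^{n+l}/W$ over $V/W$: the lifts of $V/W$ are the graphs $\{u+\phi(u):u\in V\}$ of homomorphisms $\phi\colon V\to W$, and for a rank-one $\phi$ supported on a suitable line of $V$ lying over $V_0/W\cap V/W$ and with image a suitable line of $W$, the resulting $V'$ satisfies $\dim(V\cap V')=m-1$ and $\dim(V_0\cap V')=\dim(V_0\cap V)-1$; the inequalities $j+1\le m-i$ and $j+1\le l$ provide the room required. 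For $k=1$ (so $i+1\le\min\{m,n-m\}$ and $j\le\min\{m-i-1,l\}$) one first invokes the classical metric property of $J_q(n,m)$ to choose a subspace $U'$ at Grassmann distance $1$ from $V/W$ with $\dim(V_0/W\cap U')=\dim(V_0/W\cap V/W)-1$, and then a lift $V'$ of $U'$ with $\dim(V\cap V')=m-1$ and $\dim(V_0\cap V')=(m-i-1)-j$, whose existence is again a dimension count. Alternatively, this paragraph can be replaced by a reference to the explicit intersection numbers of the attenuated-space scheme in~\cite{bi} or~\cite{WGL2010}. With both conditions of Proposition~\ref{prop:P-TFAE}(ii) in hand, the theorem follows.

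The main obstacle is this last step: writing out an honest existence argument for the borderline configurations and doing so uniformly in the parameters, in particular near the boundary of $\mathcal{D}$ where $\min\{m,n-m\}$ or $\min\{m-i,l\}$ is attained. By contrast the support condition is essentially automatic, resting only on the two soft facts that a step along $\epsilon_1$ or $\epsilon_2$ preserves a hyperplane of $V$ and that $\epsilon_2$ fixes the quotient $V/W$ while $\epsilon_1$ moves it by a single Grassmann step; so that half of the argument should be short.
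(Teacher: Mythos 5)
Your proposal is correct in outline and rests on the same pivot as the paper, namely Proposition~\ref{prop:P-TFAE}, but the two arguments diverge in how the hypotheses of that proposition are verified. The paper simply imports the explicit expansions of $A_{10}A_{ij}$ and $A_{01}A_{ij}$ from \cite{bi} (equations \eqref{eq:attenuated10} and \eqref{eq:attenuated01}, equivalently (13)--(14) of \cite{WGL2010}), reads off that every index occurring is $\le_{\mathrm{grlex}}(i+1,j)$ resp.\ $(i,j+1)$, and notes that the leading coefficients $[i+1]_q^2q^j$ and $[j+1]_qq^{i+j}$ are nonzero; this disposes of the support condition and the nonvanishing condition in one stroke, at the cost of taking the formulas on faith. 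You instead prove the support condition from scratch by a dimension count --- a step along $\epsilon_1$ or $\epsilon_2$ preserves a hyperplane of $V$, forcing $r+s\le i+j+1$, while $\epsilon_2$ fixes $V/W$ (so $r=i$) and $\epsilon_1$ moves it by one Grassmann step (so $r\le i+1$), which together pin $(r,s)\le_{\mathrm{grlex}}\alpha+\epsilon_k$ --- and this half of your argument is complete, self-contained, and arguably more illuminating than the paper's, since it makes visible \emph{why} grlex is the right order here. Your treatment of the degenerate cases $l=0$ and $m=n$ (where $\epsilon_2$ or $\epsilon_1$ is absent from $\mathcal{D}$) is also more careful than the paper, which passes over them. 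The weak point is exactly the one you flag: the nonvanishing of $p^{\alpha+\epsilon_k}_{\epsilon_k,\alpha}$ is only sketched via a rank-one-perturbation construction, and as written it is not a proof (one must check, e.g., that the perturbation $\phi$ can be chosen so that no extra vectors of $V_0$ land in $V'$, which is a further linear-algebra argument over $V\cap(V_0+W)$). Your proposed fallback for that step --- citing the explicit intersection numbers of \cite{bi} or \cite{WGL2010} --- is precisely what the paper does, so either complete the construction or accept that the nonvanishing half collapses back onto the paper's citation.
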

\begin{proof}
In order to prove this, we use Proposition~\ref{prop:P-TFAE}.
Firstly we check (i) of Definition~\ref{df:abPpoly}.
Take $(i,j)\in \mathcal{D}$ and $0\le i'\le i$ and $0\le j'\le j$.
Then:
\begin{itemize}
    \item since $i' \le i \le \min\{m, n-m\}$, we have $0 \le i' \le \min\{m, n-m\}$;
    \item since $j' \le j \le \min\{m-i,l\}\le \min\{m-i',l\}$, we have $0\le j' \le \min\{m-i',l\}$.
\end{itemize}
This implies $(i',j')\in \mathcal{D}$.

In \cite{bi}, the linear expansions of $A_{10}A_{ij}$ and $A_{01}A_{ij}$
are given as (3.58) and (3.59), respectively (see also (13) and (14) in \cite{WGL2010}).
We describe (3.58) and (3.59) in \cite{bi} according to our notation:
for $(i,j)\in \mathcal{D}$, we have
\begin{align}
    A_{10}A_{ij}=&
q^{2i+j+l-1}[m-i-j+1]_q [n-m-i+1]_q A_{i-1,j}
+[i+1]_q^2 q^j A_{i+1,j} \notag\\
&+[m-i-j+1]_q [i]_q (q^l-q^{j-1})q^{i+j} A_{i,j-1}
+[j+1]_q [i]_q q^{i+j+1} A_{i,j+1} \notag\\
&+[i+1]_q^2 (q^l-q^{j-1})A_{i+1,j-1}
+[j+1]_q [n-m-i+1]_q q^{2i+l-1} A_{i-1,j+1} \notag\\
&+[i]_q([n-m-i]_q q^{l+1+i}+[m-i-j]_q q^{i+2j+1}+[j]_q (q^l-q^{j-1})q^{i+1} +[i]_q (q-1)q^l)A_{ij}, \label{eq:attenuated10}\\
A_{01}A_{ij}=&
(q^l-q^{j-1})[m-i-j+1]_q q^{i+j-1}A_{i,j-1}
+[j+1]_q q^{i+j} A_{i,j+1} \notag\\
&+((q^l-1)[i+j]_q -[j]_q q^{i+j-1}+
(q-1)[m-i-j]_q [j]_q q^{i+j} )A_{ij}, \label{eq:attenuated01}
\end{align}
where $[n]_q:=(q^n-1)/(q-1)$ are $q$-numbers.
Note that 
in the right-hand sides of \eqref{eq:attenuated10}
and \eqref{eq:attenuated01},
the terms whose indices do not belong to $\mathcal{D}$ are
regarded as not appearing.
The indices appearing in the right-hand sides of \eqref{eq:attenuated10} are
\[
\{(i-1,j),(i,j-1),(i-1,j+1),(i,j),(i+1,j-1),(i,j+1),(i+1,j)\} \cap \mathcal{D}. 
\]
Thus, these indices are less than or equal to $(i,j)+(1,0)=(i+1,j)$
with respect to the graded lexicographic order $\le_\mathrm{grlex}$.
Moreover, if $(i+1,j)\in \mathcal{D}$, then $p^{(i+1,j)}_{(1,0)(i,j)}=[i+1]_q^2 q^j \neq 0$.
Also, the indices appearing on the right-hand sides of \eqref{eq:attenuated01} are
\[
\{(i,j-1),(i,j),(i,j+1)\} \cap \mathcal{D}. 
\]
Thus, these indices are less than or equal to $(i,j)+(0,1)=(i,j+1)$
with respect to $\le_\mathrm{grlex}$.
Moreover, if $(i,j+1)\in \mathcal{D}$, then $p^{(i,j+1)}_{(0,1)(i,j)}=[j+1]_q q^{i+j} \neq 0$.
\end{proof}

\subsection{Dodecahedron}
\label{sec:Dodecahedron}

It is known that the 1-skeleton of a dodecahedron is a univariate $P$-polynomial association scheme
(i.e., a distance-regular graph) of class 5 and not a univariate $Q$-polynomial association scheme.
Let $\mathfrak{X}=(X,\{R_i\}^5_{i=0})$ be the association scheme obtained from a dodecahedron.
The first and second eigenmatrices of $\mathfrak{X}$ are given by
\[
P=
\left(\begin{array}{rrrrrr}
1 & 3 & 6 & 6 & 3 & 1 \\
1 & \sqrt{5} & 2 & -2 & -\sqrt{5} & -1 \\
1 & 1 & -2 & -2 & 1 & 1 \\
1 & 0 & -3 & 3 & 0 & -1 \\
1 & -2 & 1 & 1 & -2 & 1 \\
1 & -\sqrt{5} & 2 & -2 & \sqrt{5} & -1
\end{array}\right),
Q=
\left(\begin{array}{rrrrrr}
1 & 3 & 5 & 4 & 4 & 3 \\
1 & \sqrt{5} & \frac{5}{3} & 0 & -\frac{8}{3} & -\sqrt{5} \\
1 & 1 & -\frac{5}{3} & -2 & \frac{2}{3} & 1 \\
1 & -1 & -\frac{5}{3} & 2 & \frac{2}{3} & -1 \\
1 & -\sqrt{5} & \frac{5}{3} & 0 & -\frac{8}{3} & \sqrt{5} \\
1 & -3 & 5 & -4 & 4 & -3
\end{array}\right),
\]
respectively.
Let us define $\mathcal{D}^\ast:=\{(0,0),(0,1),(0,2),(0,3),(1,0),(1,1)\}$ and 
\[
E_{00}:=20 E_0,\ 
E_{01}:=20 E_1,\ 
E_{02}:=20 E_2,\ 
E_{10}:=20 E_3,\ 
E_{11}:=20 E_4,\ 
E_{03}:=20 E_5.
\]
Note that $|X|=20$.

\begin{theo}
$\mathfrak{X}$ is a bivariate $Q$-polynomial association scheme
on $\mathcal{D}^\ast$ with respect to $\le_{\mathrm{grlex}}$.
\end{theo}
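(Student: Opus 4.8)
The plan is to verify condition (ii) of Proposition~\ref{prop:Q-TFAE} for the relabeled primitive idempotents, with $\ell=2$ and the graded lexicographic order $\le_{\mathrm{grlex}}$. First I would check condition (i) of Definition~\ref{df:abQpoly} for $\mathcal{D}^\ast=\{(0,0),(0,1),(0,2),(0,3),(1,0),(1,1)\}$: this is the finite-checking step of confirming that whenever $(n_1,n_2)\in\mathcal{D}^\ast$ and $0\le m_i\le n_i$, then $(m_1,m_2)\in\mathcal{D}^\ast$. The only nontrivial cases are $(1,1)$, whose ``sub-boxes'' $(0,0),(0,1),(1,0),(1,1)$ all lie in $\mathcal{D}^\ast$, and $(0,3)$, whose predecessors $(0,0),(0,1),(0,2),(0,3)$ all lie in $\mathcal{D}^\ast$; so $\mathcal{D}^\ast$ is indeed ``downward closed'' in the box sense, and in particular it contains $o=(0,0)$ and the two coordinate vectors $\epsilon_1=(1,0)$, $\epsilon_2=(0,1)$.

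The substantive step is the computation of the Krein numbers $q^\gamma_{\epsilon_1,\alpha}$ and $q^\gamma_{\epsilon_2,\alpha}$ for each $\alpha\in\mathcal{D}^\ast$, and checking the two required properties: that $q^\gamma_{\epsilon_i,\alpha}\neq 0$ forces $\gamma\le_{\mathrm{grlex}}\alpha+\epsilon_i$, and that $q^{\alpha+\epsilon_i}_{\epsilon_i,\alpha}\neq 0$ whenever $\alpha+\epsilon_i\in\mathcal{D}^\ast$. Concretely I would compute, from the given second eigenmatrix $Q$, the Hadamard products $(|X|E_{\epsilon_1})\circ(|X|E_\alpha)$ and $(|X|E_{\epsilon_2})\circ(|X|E_\alpha)$; since $|X|E_\alpha=\sum_{i}Q_\alpha(i)A_i$ and $A_i\circ A_j=\delta_{ij}A_i$, the Hadamard product $(|X|E_\beta)\circ(|X|E_\gamma)$ has $A_i$-coefficient $Q_\beta(i)Q_\gamma(i)$, and then re-expanding in the basis $\{|X|E_\alpha\}$ via the first eigenmatrix $P$ gives the Krein numbers. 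Under the identifications $E_{00}=20E_0$, $E_{01}=20E_1$, $E_{02}=20E_2$, $E_{10}=20E_3$, $E_{11}=20E_4$, $E_{03}=20E_5$, the graded-lex order on the new labels reads $(0,0)<(0,1)<(1,0)<(0,2)<(1,1)<(0,3)$, i.e. in the old indices $0<1<3<2<4<5$. So the claim to verify is that in each Krein expansion the appearing indices do not exceed $\alpha+\epsilon_i$ in this order, and that the top index is actually present when it lies in $\mathcal{D}^\ast$. For example, the delicate cases are $\alpha=(0,2)$ with $\epsilon_2$ (need the expansion of $E_{01}\circ E_{02}$ to involve no index beyond $(0,3)$, and to actually contain $(0,3)$) and $\alpha=(1,0)$ with $\epsilon_1$ (need $E_{10}\circ E_{10}$, i.e. $E_3\circ E_3$, to have no index beyond $(1,0)+(1,0)=(2,0)\notin\mathcal{D}^\ast$, meaning it must be supported on indices $\le_{\mathrm{grlex}}(2,0)$, which here means on $\{(0,0),(0,1),(1,0),(0,2),(1,1)\}$ but crucially \emph{not} on $(0,3)$).

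The main obstacle I expect is purely computational: one must carry out these Hadamard-product-and-re-expand calculations for all of the roughly ten relevant pairs $(\epsilon_i,\alpha)$ and track which coefficients vanish. The nontrivial eigenvalue entries ($\pm\sqrt5$, the thirds in $Q$) make the arithmetic a little delicate, but the vanishing pattern is what matters, not the exact values. I would organize this by first recording the column $Q_{\bullet}(i)$ for each class $i$, forming the pointwise products needed, and then solving the small linear systems against $P$; it is essentially a $6\times 6$ bookkeeping exercise. A secondary point to be careful about is that, since $\mathfrak{X}$ is symmetric, the Krein numbers inherit the symmetry $q^\beta_{\alpha\gamma}=0\iff q^\gamma_{\alpha\beta}=0$, so several of the required nonvanishings come for free once a single companion coefficient is shown nonzero; I would use this to cut the casework roughly in half. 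Once condition (ii) of Proposition~\ref{prop:Q-TFAE} is verified, that proposition immediately yields that $\mathfrak{X}$ is bivariate $Q$-polynomial on $\mathcal{D}^\ast$ with respect to $\le_{\mathrm{grlex}}$, completing the proof.
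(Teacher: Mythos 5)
Your proposal is correct and follows essentially the same route as the paper: check the downward-closedness of $\mathcal{D}^\ast$, compute the Krein numbers $q^{kl}_{01,ij}$ and $q^{kl}_{10,ij}$ from the eigenmatrices, and invoke Proposition~\ref{prop:Q-TFAE}(ii); the paper simply records the result of that bookkeeping as the two matrices $L^\ast_{01}$ and $L^\ast_{10}$. Your identification of the grlex order on the relabeled indices and of the delicate cases (the presence of $E_{03}$ in $E_{01}\circ E_{02}$ and its absence from $E_{10}\circ E_{10}$) matches what the paper's computation confirms.
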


\begin{proof}
Obviously, $\mathcal{D}^\ast$ satisfies (i) of Definition~\ref{df:abQpoly}.
The matrices $L^\ast_{01}$ and $L^\ast_{10}$, 
of entries $(L^\ast_{01})_{kl,ij}=q^{kl}_{01,ij}$ and
$(L^\ast_{10})_{kl,ij}=q^{kl}_{10,ij}$, are given by
\[
L^\ast_{01}=
\left(\begin{array}{rrrrrr}
0 & 3 & 0 & 0 & 0 & 0 \\
1 & 0 & 2 & 0 & 0 & 0 \\
0 & \frac{6}{5} & 0 & \frac{8}{5} & 0 & \frac{1}{5} \\
0 & 0 & 2 & 0 & 1 & 0 \\
0 & 0 & 0 & 1 & 0 & 2 \\
0 & 0 & \frac{1}{3} & 0 & \frac{8}{3} & 0
\end{array}\right)
\ \text{and}\ 
L^\ast_{10}=
\left(\begin{array}{rrrrrr}
0 & 0 & 0 & 4 & 0 & 0 \\
0 & 0 & \frac{8}{3} & 0 & \frac{4}{3} & 0 \\
0 & \frac{8}{5} & 0 & \frac{4}{5} & 0 & \frac{8}{5} \\
1 & 0 & 1 & 0 & 2 & 0 \\
0 & 1 & 0 & 2 & 0 & 1 \\
0 & 0 & \frac{8}{3} & 0 & \frac{4}{3} & 0
\end{array}\right),
\]
where rows and columns are indexed by $00,01,02,10,11,03$.
This implies, for example, 
$E_{01}\circ E_{02}= 2E_{01}+2E_{10}+\frac{1}{3}E_{03}$
and
$E_{01}\circ E_{11}= E_{10}+\frac{8}{3}E_{03}$.
Therefore, we can check that $\mathfrak{X}$ satisfies the condition (ii) of Proposition~\ref{prop:Q-TFAE}
with respect to $\le_{\mathrm{grlex}}$.
\end{proof}

\begin{Rem}
By the recurrence relations obtained from the columns of $L^\ast_{01}$ or $L^\ast_{10}$,
we can check that these matrices satisfy
\begin{align*}
E_{02}&=\frac{5}{6}E_{01}\circ E_{01}-\frac{5}{2}E_{00},\\
E_{03}&=\frac{5}{2}E_{01}\circ E_{01} \circ E_{01} -\frac{27}{2} E_{01} -6 E_{10},\\
E_{11}&=E_{10}\circ E_{01}-\frac{4}{3}E_{01}\circ E_{01}+4 E_{00}.
\end{align*}
Thus, we obtain the associated polynomials of the bivariate $Q$-polynomial association scheme $\mathfrak{X}$ as follows:
$v^\ast_{02}(x,y):=\frac{5}{6}y^2-\frac{5}{2}$,
$v^\ast_{03}(x,y):=\frac{5}{2}y^3 -\frac{27}{2} y -6x$ and
$v^\ast_{11}(x,y):=xy-\frac{4}{3}y^2+4$.
\end{Rem}

\section{New examples of multivariate polynomial association schemes}
\label{sec:Examplesmultpoly}

\subsection{Direct product of association schemes}
\label{sec:DirectproductAS}
Let $\mathfrak{X}^{(k)}=(X^{(k)}, \{A^{(k)}_i\}^{d_k}_{i=0})$
be commutative association schemes of class $d_k$ for $k=1,2,\ldots ,\ell$.
The \emph{direct product} of $\{\mathfrak{X}^{(k)}\}^\ell_{k=1}$ is
the association scheme defined by the Kronecker product of the adjacency matrices of $\mathfrak{X}^{(k)}$:
\[
A_{(n_1,n_2,\ldots ,n_\ell)} :=
A^{(1)}_{n_1}\otimes A^{(2)}_{n_2} \otimes \cdots \otimes A^{(\ell)}_{n_\ell}
\]
for
\[ 
(n_1,n_2,\ldots ,n_\ell)\in 
\mathcal{D}:=\{0,1,\ldots , d_1\}\times
\{0,1,\ldots , d_2\}\times \cdots \times 
\{0,1,\ldots , d_\ell\}.
\]
This association scheme is denoted by $\bigotimes^{\ell}_{k=1}\mathfrak{X}^{(k)}$.
\begin{theo}
    The followings hold:
    \begin{enumerate}[label=$(\roman*)$]
    \item if $\{\mathfrak{X}^{(k)}\}^\ell_{k=1}$ are $P$-polynomial,
    then $\bigotimes^{\ell}_{k=1}\mathfrak{X}^{(k)}$ is an $\ell$-variate $P$-polynomial association scheme
    on $\mathcal{D}$ with respect to any monomial order $\le$;
    \item if $\{\mathfrak{X}^{(k)}\}^\ell_{k=1}$ are $Q$-polynomial,
    then $\bigotimes^{\ell}_{k=1}\mathfrak{X}^{(k)}$ is an $\ell$-variate $Q$-polynomial association scheme
    on $\mathcal{D}$ with respect to any monomial order $\le$.
    \end{enumerate}
\end{theo}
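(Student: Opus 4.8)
The plan is to verify the criterion of Proposition~\ref{prop:P-TFAE} for part $(i)$ and the criterion of Proposition~\ref{prop:Q-TFAE} for part $(ii)$. The two arguments are formally identical under the usual dictionary exchanging adjacency matrices with primitive idempotents, the matrix product with the Hadamard product, and intersection numbers with Krein numbers, so I would write out $(i)$ in detail and only indicate the bookkeeping needed for $(ii)$. Throughout, recall that $\bigotimes_{k=1}^\ell\mathfrak{X}^{(k)}$ is itself a (symmetric, hence commutative) association scheme whose adjacency matrices are indexed by $\mathcal{D}$, so Proposition~\ref{prop:P-TFAE} is applicable.

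First, the domain $\mathcal{D}=\{0,\ldots,d_1\}\times\cdots\times\{0,\ldots,d_\ell\}$ is a product of intervals, hence automatically satisfies condition $(i)$ of Definition~\ref{df:abPpoly} (it is closed under passing to componentwise-smaller tuples) and contains every $\epsilon_k$, assuming each $d_k\ge 1$, which loses nothing. So it remains to control the intersection numbers. Fix $i\in\{1,\ldots,\ell\}$ and $\alpha=(n_1,\ldots,n_\ell)\in\mathcal{D}$. By construction $A_{\epsilon_i}=A^{(1)}_0\otimes\cdots\otimes A^{(i)}_1\otimes\cdots\otimes A^{(\ell)}_0$ and $A_\alpha=A^{(1)}_{n_1}\otimes\cdots\otimes A^{(\ell)}_{n_\ell}$, so
\[
A_{\epsilon_i}A_\alpha=A^{(1)}_{n_1}\otimes\cdots\otimes\bigl(A^{(i)}_1 A^{(i)}_{n_i}\bigr)\otimes\cdots\otimes A^{(\ell)}_{n_\ell}.
\]
Since $\mathfrak{X}^{(i)}$ is $P$-polynomial, its three-term recurrence reads $A^{(i)}_1 A^{(i)}_{n_i}=c_{n_i}A^{(i)}_{n_i-1}+a_{n_i}A^{(i)}_{n_i}+b_{n_i}A^{(i)}_{n_i+1}$ (out-of-range terms being zero), where $b_{n_i}\neq 0$ whenever $n_i+1\le d_i$ by the nondegeneracy of the recurrence of a univariate $P$-polynomial scheme (this is standard; it is also Lemma~\ref{lem:recur} applied with $\ell=1$). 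Substituting and re-collecting the Kronecker product gives
\[
A_{\epsilon_i}A_\alpha=c_{n_i}A_{\alpha-\epsilon_i}+a_{n_i}A_\alpha+b_{n_i}A_{\alpha+\epsilon_i},
\]
where a summand is omitted if its index leaves $\mathcal{D}$. Hence $p^{\beta}_{\epsilon_i,\alpha}\neq 0$ forces $\beta\in\{\alpha-\epsilon_i,\alpha,\alpha+\epsilon_i\}$, and all three of these satisfy $\beta\le\alpha+\epsilon_i$ for any monomial order by \eqref{eq:b_le_a}; moreover $\alpha+\epsilon_i\in\mathcal{D}$ means exactly $n_i+1\le d_i$, whence $p^{\alpha+\epsilon_i}_{\epsilon_i,\alpha}=b_{n_i}\neq 0$. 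This is precisely condition $(ii)$ of Proposition~\ref{prop:P-TFAE}, so part $(i)$ follows.

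For part $(ii)$ the primitive idempotents of $\bigotimes_{k=1}^\ell\mathfrak{X}^{(k)}$ are the Kronecker products $E_\alpha=E^{(1)}_{n_1}\otimes\cdots\otimes E^{(\ell)}_{n_\ell}$, and one repeats the computation with $\circ$ in place of the matrix product. The two extra facts needed are the factorization $(M_1\otimes\cdots\otimes M_\ell)\circ(N_1\otimes\cdots\otimes N_\ell)=(M_1\circ N_1)\otimes\cdots\otimes(M_\ell\circ N_\ell)$ for Hadamard products of Kronecker products, and the fact that $|X^{(k)}|E^{(k)}_0=J$ is the unit for $\circ$ on the $k$-th tensor slot; using $|X|=\prod_k|X^{(k)}|$ these yield
\[
(|X|E_{\epsilon_i})\circ(|X|E_\alpha)=c^\ast_{n_i}\,|X|E_{\alpha-\epsilon_i}+a^\ast_{n_i}\,|X|E_\alpha+b^\ast_{n_i}\,|X|E_{\alpha+\epsilon_i},
\]
where $c^\ast,a^\ast,b^\ast$ are the coefficients of the $Q$-polynomial three-term recurrence of $\mathfrak{X}^{(i)}$ and $b^\ast_{n_i}\neq 0$ when $n_i+1\le d_i$. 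Proposition~\ref{prop:Q-TFAE} then applies verbatim. The computations are routine; the one place requiring a little care is the $Q$-polynomial normalization — tracking the factors $|X|$ and $|X^{(k)}|$ and verifying that the off-diagonal tensor slots collapse precisely because $|X^{(k)}|E^{(k)}_0=J$ is the Hadamard unit there — but beyond this only the univariate three-term recurrences and the inequality \eqref{eq:b_le_a} are used. (Alternatively, one may verify Definitions~\ref{df:abPpoly}–\ref{df:abQpoly} directly: the associated polynomials are the products $v_\alpha(\bm{x})=\prod_{k=1}^\ell v^{(k)}_{n_k}(x_k)$ of the univariate $P$- resp. $Q$-polynomials of the factors, and $\MD(v_\alpha)=\alpha$ for every monomial order, again by \eqref{eq:b_le_a}.)
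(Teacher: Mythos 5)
Your proof is correct, but it takes a different route from the paper's. The paper verifies Definition~\ref{df:abPpoly} directly: it writes down the associated polynomials $v_\alpha(\bm{x})=\prod_{k=1}^\ell v^{(k)}_{n_k}(x_k)$ as products of the univariate $P$-polynomials of the factors, checks via \eqref{eq:b_le_a} that $\MD(v_\alpha)=\alpha$ for any monomial order, and disposes of condition (iii) by noting that $(A^{(k)}_1)^{d_k+1}$ is a linear combination of lower powers; part (ii) is handled the same way after citing Martin for the fact that the primitive idempotents of the product are the Kronecker products $E^{(1)}_{n_1}\otimes\cdots\otimes E^{(\ell)}_{n_\ell}$. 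You instead verify the intersection-number criterion of Proposition~\ref{prop:P-TFAE} (resp.\ Proposition~\ref{prop:Q-TFAE}) by computing $A_{\epsilon_i}A_\alpha$ slotwise and invoking the univariate three-term recurrence in the $i$-th tensor factor, with the nonvanishing of the leading coefficient $b_{n_i}$ giving $p^{\alpha+\epsilon_i}_{\epsilon_i,\alpha}\neq 0$. Both arguments are sound and of comparable length; yours produces the explicit multivariate three-term recurrences in each variable and avoids having to check condition (iii) of the definition separately, while the paper's produces the explicit polynomials $v_\alpha$ as a by-product (the alternative you sketch parenthetically at the end is essentially the paper's proof). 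One small housekeeping point: you should still cite or justify the claim that the primitive idempotents of the direct product are the Kronecker products of those of the factors, as the paper does via Martin's result.
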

\begin{proof}
(i) 
Obviously, $\mathcal{D}$ satisfies (i) of Definition~\ref{df:abPpoly}.
Let $\{v^{(k)}_i\}^{d_k}_{i=0}$ be the associated polynomials of a $P$-polynomial association scheme $\mathfrak{X}^{(k)}$. 
Fix $\alpha=(n_1,n_2,\ldots ,n_\ell)\in \mathcal{D}$.
Since $\{\mathfrak{X}^{(k)}\}^\ell_{k=1}$ are $P$-polynomial,
we have
\[
A_{\alpha}
=
v_{\alpha} (A_{\epsilon_1},A_{\epsilon_2}, \ldots A_{\epsilon_\ell}),
\]
where $v_{\alpha}(\bm{x})=\prod^\ell_{k=1} v^{(k)}_{n_k}(x_k)$.
Then all monomials $x_1^{m_1}x_2^{m_2}\cdots x_\ell^{m_\ell}$ in $v_{\alpha}(\bm{x})$
satisfy $m_k \le n_k$ for $k=1,2,\ldots ,\ell$.
Put $\beta=(m_1,m_2,\ldots ,m_\ell)$.
By \eqref{eq:b_le_a},
for any monomial order $\le$, we have $\beta\le \alpha$.
This implies that the multidegree of  $v_{\alpha}(\bm{x})$ coincides with $\alpha$.
Furthermore, since $(A^{(k)}_1)^{d_k+1}$ is expressed as a linear combination of $\{(A^{(k)}_1)^i\}^{d_k}_{i=1}$
for $k=1,2,\ldots ,\ell$,
$\mathfrak{X}$ satisfies (iii) of Definition~\ref{df:abPpoly}.
Therefore, the desired result follows.

(ii) Let $\{E^{(k)}_j\}^{d_k}_{j=0}$ be
the primitive idempotents of $\mathfrak{X}^{(k)}$
and $\{v^{\ast (k)}_j\}^{d_k}_{j=0}$ be the associated polynomials of a $Q$-polynomial association scheme $\mathfrak{X}^{(k)}$. 
By Martin~\cite{Martin1999}, the primitive idempotents of $\bigotimes^{\ell}_{k=1}\mathfrak{X}^{(k)}$ are
\[
E_{\alpha} :=
E^{(1)}_{n_1}\otimes E^{(2)}_{n_2} \otimes \cdots \otimes E^{(\ell)}_{n_\ell}
\]
for $\alpha=(n_1,n_2,\ldots ,n_\ell)\in \mathcal{D}$.
Since $\{\mathfrak{X}^{(k)}\}^\ell_{k=1}$ are $Q$-polynomial,
we have
\[
N E_{\alpha}
=
v^\ast_{\alpha} (N E_{\epsilon_1},N E_{\epsilon_2}, \ldots ,N E_{\epsilon_\ell}),
\]
where 
$N=\prod^\ell_{k=1}|X^{(k)}|$ and
$v^\ast_{\alpha}(\bm{x})=\prod^\ell_{k=1} v^{\ast (k)}_{n_k}(x_k)$.
Similarly to (i), we can show that $\bigotimes^{\ell}_{k=1}\mathfrak{X}^{(k)}$ is $\ell$-variate $Q$-polynomial.
\end{proof}


To conclude this subsection, we give an example where the direct product of association schemes is essentially multivariate.
Let $K_2$ be the association scheme of size two
and $\mathfrak{X}=K_2 \otimes K_2 \otimes K_2$.
Then $\mathfrak{X}$ is a 7-class symmetric association scheme.
Since $\mathfrak{X}$ is isomorphic to
the group association scheme of $(\mathbb{F}_2^3, +)$,
the Bose-Mesner algebra of $\mathfrak{X}$ is isomorphic to
the group ring $\C [\mathbb{F}_2^3]$ as algebra.
Then for any two adjacency matrices $A_i ,A_j$ of $\mathfrak{X}$,
we have $\dim \Span_{\C}\{A_i^m A_j^n \mid m,n\in \N\} \le 4 < 8$.
By Lemma~\ref{lem:BMalg},
this implies that $\mathfrak{X}$ is not bivariate $P$-polynomial.
Therefore, $\mathfrak{X}$ is essentially trivariate $P$-polynomial.
Also, $\mathfrak{X}$ is essentially trivariate $Q$-polynomial.

\subsection{Composition of Gelfand pairs}
\label{sec:ComGelpair}
Let $G$ and $F$ be two finite groups with subgroups
$K \le G$ and $H \le F$.
Denote by $X = G/K$ and $Y = F/H$
the corresponding homogeneous spaces.
Recall that the \emph{wreath product} $F\wr G$ of $F$ by $G$ is the group whose set of elements is
\[
F^X\times G :=\{(f,g)\mid f\colon X\to F,\  g\in G\}    
\]
and multiplication $(f, g)(f ',g') = (f \cdot (gf'), gg')$, where
$[f\cdot (gf')](x) = f(x)f'(g^{-1}x)$
for all $x \in X$.
Consider the composition action of the wreath product $F\wr G$ on $X \times Y$ by
\[
(f,g)(x,y):=(gx,f(gx)y)
\]
for $(f,g)\in F\wr G$ and $(x,y)\in X\times Y$.
Let $x_0\in X$ and $y_0\in Y$ be the points stabilized by
$K$ and $H$, respectively.
By~\cite{CST2006},
the stabilizer $J\le F\wr G$ of the point $(x_0,y_0)$
is given by
\[
    J=\{(f,k)\in F\wr G \mid k\in K,\  f(x_0)\in H \}.
\]
Also let $X= \bigsqcup^n_{i=0}\Xi_i$
and $Y = \bigsqcup^m_{j=0} \Lambda_j$
be the decompositions of $X$ and $Y$ into their
$K$- (respectively $H$-) orbits
(with $\Xi_0 =\{x_0\}$ and $\Lambda_0 =\{y_0\}$).
By~\cite{CST2006}, the decomposition of $X\times Y$ into its $J$-orbits is given by
\[
X\times Y=
\left[\bigsqcup^m_{j=0} (\Xi_0\times \Lambda_j)\right]    
\sqcup
\left[\bigsqcup^n_{i=1} (\Xi_i\times Y)\right].
\]

Suppose that $(G, K)$ and $(F, H )$ are Gelfand pairs
and let $L(X) = \bigoplus ^n_{i=0} V_i$ and $L(Y ) = \bigoplus ^m_{j=0} W_j$
be the decomposition into $G$- (respectively $F$-)
irreducible subrepresentations,
where $V_0$ and $W_0$ are the one-dimensional subspaces of
constant functions.
By~\cite{CST2006},
$(F \wr G, J )$ is a Gelfand pair if and only if 
$(G, K)$ and $(F, H )$ are Gelfand pairs.
Moreover, the decomposition of $L(X \times Y)$ into
$(F \wr G)$-irreducibles is given by
\[
L(X\times Y)=  
\left[\bigoplus^n_{i=0} (V_i\otimes W_0)\right]    
\oplus
\left[\bigoplus^m_{j=1} (L(X)\otimes W_j)\right].
\]
Let $\mathfrak{X}$ and $\mathfrak{Y}$
be the association schemes obtained by $(G,K)$ and $(F,H)$, respectively.
The association scheme obtained by $(F \wr G, J )$
is called the \emph{composition} of $\mathfrak{X}$ and $\mathfrak{Y}$.
Note that the composition of $\mathfrak{X}$ and $\mathfrak{Y}$ is a fusion scheme of $\mathfrak{X}\otimes \mathfrak{Y}$.

\begin{theo}
    Let $\mathfrak{Z}$ be the composition of $\mathfrak{X}$ and $\mathfrak{Y}$. 
\begin{enumerate}[label=$(\roman*)$]
    \item If $\mathfrak{X}$ and $\mathfrak{Y}$ are $P$-polynomial association schemes,
    then $\mathfrak{Z}$ is a bivariate $P$-polynomial association scheme
    on
    \[
        \mathcal{D}:=\{(i,0)\}^n_{i=1}\cup \{(0,j)\}^m_{j=0}\subset \N^2
    \]
    with respect to $\le_{\mathrm{lex}}$.
    \item If $\mathfrak{X}$ and $\mathfrak{Y}$  are $Q$-polynomial association schemes,
    then $\mathfrak{Z}$ is a bivariate $Q$-polynomial association scheme on
    \[
        \mathcal{D}^\ast:=\{(j,0)\}^m_{j=1}\cup \{(0,i)\}^n_{i=0}\subset \N^2
    \]
    with respect to $\le_{\mathrm{lex}}$.
\end{enumerate}
\end{theo}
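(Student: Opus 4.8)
The plan is to verify condition~(ii) of Proposition~\ref{prop:P-TFAE} for part~(i) and condition~(ii) of Proposition~\ref{prop:Q-TFAE} for part~(ii); with the relevant intersection numbers (resp.\ Krein numbers) under control the polynomial property follows automatically, so no associated polynomials need to be exhibited. Throughout, $A^{\mathfrak{X}}_i$ ($0\le i\le n$) and $A^{\mathfrak{Y}}_j$ ($0\le j\le m$) denote the adjacency matrices of $\mathfrak{X}$ and $\mathfrak{Y}$, and $\mathfrak{Z}$ is commutative since $(F\wr G,J)$ is a Gelfand pair by \cite{CST2006}.

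For part~(i), the first step is to read off from the given $J$-orbit decomposition of $X\times Y$ that, after relabeling, the adjacency matrices of $\mathfrak{Z}$ are $A_{(0,j)}=I_X\otimes A^{\mathfrak{Y}}_j$ for $0\le j\le m$ and $A_{(i,0)}=A^{\mathfrak{X}}_i\otimes J_Y$ for $1\le i\le n$: the orbit $\Xi_0\times\Lambda_j$ forces the two $X$-coordinates to coincide while realizing $R^{\mathfrak{Y}}_j$ on the $Y$-coordinates, whereas on $\Xi_i\times Y$ with $i\ge1$ the condition $x\ne x_0$ makes the $F$-values of an element of $F\wr G$ at $x_0$ and at a point of $\Xi_i$ independent, so the $Y$-coordinates sweep out all of $Y\times Y$. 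In particular $A_{\epsilon_1}=A^{\mathfrak{X}}_1\otimes J_Y$, $A_{\epsilon_2}=I_X\otimes A^{\mathfrak{Y}}_1$, $A_o=I_{X\times Y}$, and $\mathcal{D}$ plainly satisfies condition~(i) of Definition~\ref{df:abPpoly}. Next one computes the products $A_{\epsilon_k}A_\alpha$ via $(A\otimes B)(C\otimes D)=AC\otimes BD$, the three-term recurrences of $A^{\mathfrak{X}}_1$ and of $A^{\mathfrak{Y}}_1$, and the identities $J_YA^{\mathfrak{Y}}_j=k^{\mathfrak{Y}}_jJ_Y$, $J_Y^2=|Y|J_Y$ and $I_X\otimes J_Y=\sum_{j=0}^m A_{(0,j)}$:
\[
A_{\epsilon_2}A_{(0,j)}=I_X\otimes\bigl(A^{\mathfrak{Y}}_1A^{\mathfrak{Y}}_j\bigr),\quad A_{\epsilon_2}A_{(i,0)}=k^{\mathfrak{Y}}_1A_{(i,0)},\quad A_{\epsilon_1}A_{(0,j)}=k^{\mathfrak{Y}}_jA_{(1,0)},\quad A_{\epsilon_1}A_{(i,0)}=\bigl(A^{\mathfrak{X}}_1A^{\mathfrak{X}}_i\bigr)\otimes|Y|J_Y ,
\]
the first expanding over $A_{(0,j-1)},A_{(0,j)},A_{(0,j+1)}$ and the last over $A_{(i-1,0)},A_{(i,0)},A_{(i+1,0)}$, the one subtlety being that in $A_{\epsilon_1}A_{(1,0)}$ the ``$A^{\mathfrak{X}}_0$''-term becomes $|Y|\,I_X\otimes J_Y=|Y|\sum_{j}A_{(0,j)}$ rather than a single $A_{(0,0)}$. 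In every case the indices occurring are $\le_{\mathrm{lex}}\alpha+\epsilon_k$ — because $(0,j)<_{\mathrm{lex}}(1,0)<_{\mathrm{lex}}(i,0)$ and along each axis $\le_{\mathrm{lex}}$ is the usual order — and whenever $\alpha+\epsilon_k\in\mathcal{D}$ the coefficient of $A_{\alpha+\epsilon_k}$ is one of the nonzero numbers $c^{\mathfrak{Y}}_{j+1}$, $k^{\mathfrak{Y}}_0=1$, $|Y|c^{\mathfrak{X}}_{i+1}$ (with $c^{\mathfrak{X}}_{i+1}$, $c^{\mathfrak{Y}}_{j+1}$ the nonzero subdiagonal intersection numbers of the $P$-polynomial schemes). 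That is condition~(ii) of Proposition~\ref{prop:P-TFAE}.

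Part~(ii) is the Hadamard-dual argument, via Proposition~\ref{prop:Q-TFAE}. From the displayed decomposition of $L(X\times Y)$ into $(F\wr G)$-irreducibles, the primitive idempotents of $\mathfrak{Z}$ are, after relabeling, $E_{(0,i)}=E^{\mathfrak{X}}_i\otimes E^{\mathfrak{Y}}_0$ for $0\le i\le n$ and $E_{(j,0)}=I_X\otimes E^{\mathfrak{Y}}_j$ for $1\le j\le m$, equivalently $NE_{(0,i)}=(|X|E^{\mathfrak{X}}_i)\otimes J_Y$ and $NE_{(j,0)}=(|X|I_X)\otimes(|Y|E^{\mathfrak{Y}}_j)$ with $N=|X|\,|Y|$. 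One then repeats the computation of part~(i) with ordinary product replaced by the Hadamard product, using $(A\otimes B)\circ(C\otimes D)=(A\circ C)\otimes(B\circ D)$, the $Q$-polynomial three-term recurrences of $|X|E^{\mathfrak{X}}_1$ and $|Y|E^{\mathfrak{Y}}_1$, and the identities $J_Y\circ M=M$, $I_X\circ E^{\mathfrak{X}}_i=\tfrac{m^{\mathfrak{X}}_i}{|X|}I_X$ (idempotents have constant diagonal, $m^{\mathfrak{X}}_i$ the multiplicity) and $\sum_{i=0}^n NE_{(0,i)}=(|X|I_X)\otimes J_Y$; this yields the Krein-number condition of Proposition~\ref{prop:Q-TFAE} with respect to $\le_{\mathrm{lex}}$.

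The conceptual content is light; the real work sits in the first step of each half — deriving the explicit Kronecker forms of the $A_\alpha$ (and of the $E_\alpha$) from the orbit and irreducible-decomposition data of \cite{CST2006} — and in keeping track of the boundary terms, where a lower $\mathfrak{X}$-index collapsing to $0$ throws up the all-ones block $I_X\otimes J_Y$ (dually, $|X|I_X=\sum_i|X|E^{\mathfrak{X}}_i$); since that block lies in the span of the ``$\mathfrak{Y}$-axis'' indices, it is exactly what forces, and is harmlessly absorbed by, the lexicographic order.
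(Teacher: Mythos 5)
Your proposal is correct and follows essentially the same route as the paper: both derive the Kronecker-product forms $A_{(i,0)}=A^{\mathfrak{X}}_i\otimes J_Y$, $A_{(0,j)}=I_X\otimes A^{\mathfrak{Y}}_j$ (and dually $NE_{(0,i)}=|X|E^{\mathfrak{X}}_i\otimes J_Y$, $NE_{(j,0)}=|X|I_X\otimes|Y|E^{\mathfrak{Y}}_j$), compute the products of the generators against all basis elements using the univariate three-term recurrences together with $J_Y A^{\mathfrak{Y}}_j=k^{\mathfrak{Y}}_jJ_Y$ and $I_X\otimes J_Y=\sum_j A_{(0,j)}$, and then invoke Propositions~\ref{prop:P-TFAE} and \ref{prop:Q-TFAE}. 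Your identification of the boundary term $|Y|\,I_X\otimes J_Y=|Y|\sum_j A_{(0,j)}$ in $A_{\epsilon_1}A_{(1,0)}$ as the reason the lexicographic (rather than graded) order is needed is exactly the point of the paper's computation of $A_{10}A_{10}$.
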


\begin{proof}
(i) 
Obviously, $\mathcal{D}$ satisfies (i) of Definition~\ref{df:abPpoly}.
Let $\{A_i\}^n_{i=0}$ and $\{A'_j\}^m_{j=0}$
be the adjacency matrices of $\mathfrak{X}$ and $\mathfrak{Y}$, respectively,
and $p^k_{ij}$ and $p'^k_{ij}$
be the intersection numbers of $\mathfrak{X}$ and $\mathfrak{Y}$, respectively.
Let $R_{i0}:=\widetilde{\Xi_i \times Y}$ and $R_{0j}:=\widetilde{\Xi_0 \times \Lambda_j}$.
By definition, the adjacency matrices of $\mathfrak{Z}$ are
$A_{i0}=A_i\otimes J_Y$
for $(i,0)\in \mathcal{D}$ and
$A_{0j}=I_X\otimes A'_j$
for $(0,j)\in \mathcal{D}$.
Since $\mathfrak{X}$ and $\mathfrak{Y}$ are $P$-polynomial, we have
\begin{align*}
    A_{01} A_{0j} &= I_X^2 \otimes A'_1 A'_j = I_X\otimes (p'^{j-1}_{1j} A'_{j-1} + p'^{j}_{1j} A'_j +p'^{j+1}_{1j} A'_{j+1})\\
    &=p'^{j-1}_{1j} A_{0,j-1} + p'^{j}_{1j} A_{0j}+p'^{j+1}_{1j} A_{0,j+1}\ \text{for}\  0\le j \le m,\\
    A_{10} A_{0j} &= A_1 \otimes J_Y A'_j = p'^{0}_{jj} A_1 \otimes J_Y =p'^{0}_{jj} A_{10}\ \text{for}\  0\le j \le m,\\
    A_{01} A_{i0} &= A_i \otimes A'_1 J_Y = p'^{0}_{11} A_i \otimes J_Y = p'^{0}_{11} A_{i0}\ \text{for}\  1\le i \le n,\\
    A_{10} A_{10} &= A_1^2 \otimes J_Y^2 = |Y| (p^0_{11} I_X + p^1_{11} A_1 +p^2_{11} A_2)\otimes J_Y\\
    &=|Y| (p^0_{11} \sum^m_{j=0} A_{0j} + p^1_{11} A_{10} +p^2_{11} A_{20}),\\
    A_{10} A_{i0} &= A_1 A_i \otimes J_Y^2 = |Y| (p^{i-1}_{1i} A_{i-1} + p^{i}_{1i} A_i +p^{i+1}_{1i} A_{i+1})\otimes J_Y\\
    &=|Y| (p^{i-1}_{1i} A_{i-1,0} + p^{i}_{1i} A_{i0} +p^{i+1}_{1i} A_{i+1,0})\ \text{for}\  i\ge 2.
\end{align*}
By Proposition~\ref{prop:P-TFAE}, $\mathfrak{Z}$ is
a bivariate $P$-polynomial association scheme on $\mathcal{D}$
with respect to $\le_{\mathrm{lex}}$.

(ii)
Obviously, $\mathcal{D}^\ast$ satisfies (i) of Definition~\ref{df:abQpoly}.
Let $\{E_i\}^n_{i=0}$ and $\{E'_j\}^m_{j=0}$
be the primitive idempotents of $\mathfrak{X}$ and $\mathfrak{Y}$, respectively,
and $q^k_{ij}$ and $q'^k_{ij}$
be the Krein numbers of $\mathfrak{X}$ and $\mathfrak{Y}$, respectively.
Let $E_{j0}$ and $E_{0i}$
be the primitive idempotents with respect to $L(X)\otimes W_j$
and $V_i\otimes W_0$, respectively.
Then we have
$E_{j0}=I_X\otimes E'_j$
for $(j,0)\in \mathcal{D}^\ast$
and
$E_{0i}=E_i\otimes E'_0$
for $(0,i)\in \mathcal{D}^\ast$.
Also, we put $F_{j0}=|X||Y|E_{j0}$ and $F_{0i}=|X||Y|E_{0i}$.
Since $\mathfrak{X}$ and $\mathfrak{Y}$ are $Q$-polynomial, we have
\begin{align*}
    F_{01} \circ F_{0i} &= (|X|E_1 \circ |X|E_i) \otimes (|Y|E'_{0} \circ |Y|E'_{0}) 
    =|X|(q^{i-1}_{1i}E_{i-1}+q^i_{1i}E_i +q^{i+1}_{1i} E_{i+1})\otimes |Y|E'_{0}\\
    &=q^{i-1}_{1i}F_{0,i-1}+q^i_{1i}F_{0i} +q^{i+1}_{1i} F_{0,i+1}\ \text{for}\  0\le i \le n,\\
    F_{10} \circ F_{0i} &= (|X|I_X \circ |X|E_i) \otimes (|Y|E'_{1} \circ |Y|E'_{0}) \\
    &=(q^0_{ii}|X|I_X)\otimes (|Y|E'_1) = q^0_{ii} F_{10}\ \text{for}\  0\le i \le n,\\
    F_{01}\circ F_{j0} &= (|X|E_1 \circ |X|I_X) \otimes (|Y|E'_{0} \circ |Y|E'_{j}) \\
    &=(q^0_{11}|X|I_X)\otimes (|Y|E'_j) = q^0_{11} F_{j0}\ \text{for}\  0\le j \le m,\\
    F_{10}\circ F_{10} &= (|X|I_X\circ |X|I_X)\otimes (|Y|E'_1 \circ |Y|E'_1) =|X|^2 I_X\otimes |Y|(q'^0_{11}E'_0 + q'^1_{11}E'_1 + q'^2_{11}E'_2)\\
    &=|X|(q'^0_{11} \sum^n_{i=0}F_{0i} + q'^1_{11}F_{10} + q'^2_{11}F_{20}),\\
    F_{10}\circ F_{j0} &= (|X|I_X\circ |X|I_X) \otimes (|Y|E'_1 \circ |Y|E'_j)= |X|^2 I_X\otimes|Y| (q'^{j-1}_{1j}E'_{j-1}+q'^j_{1j}E'_j +q'^{j+1}_{1j}E'_{j+1})\\
    &=|X|(q'^{j-1}_{1j}F_{j-1,0}+q'^j_{1j}F_{j0} +q'^{j+1}_{1j}F_{j+1,0})\ \text{for}\  j\ge 2.
\end{align*}
By Proposition~\ref{prop:Q-TFAE}, $\mathfrak{Z}$ is
a bivariate $Q$-polynomial association scheme on $\mathcal{D}^\ast$
with respect to $\le_{\mathrm{lex}}$.
\end{proof}

\subsection{The extensions of association schemes}
\label{sec:GHS}

In this subsection, we show that the extensions of association schemes are multivariate $P$-polynomial and $Q$-polynomial 
association schemes.
For more information on extensions of association schemes,
see \cite{bi,CST2006,Delsarte1973,MT2004}.

For a commutative association scheme $\mathfrak{X}=(X,\{R_i\}^\ell_{i=0})$ of class $\ell$
and an integer $n\ge 1$, let $X^n$ be the $n$-th Cartesian power of $X$.
For $x=(x_1,x_2,\ldots , x_n),y=(y_1,y_2,\ldots , y_n)\in X^n$
and $0\le i \le \ell$,
set $\tau_i(x,y):=|\{t =1,2,\ldots ,n\mid (x_t,y_t)\in R_i\}|$.
We define the following $\ell$-tuple:
\[
    \mathcal{R} (x,y) :=(\tau_1(x,y),\tau_2(x,y),\ldots ,\tau_\ell(x,y)).    
\]
By the definition of $\mathcal{R}$,
all $\mathcal{R} (x,y)$ are in $\mathcal{D}:=\{\alpha \in \N^{\ell} \mid |\alpha|\le n \}$. 
Then $\mathcal{S}^n(\mathfrak{X}):=(X^n,\mathcal{R})$ is a commutative association scheme
and called the \emph{extension}
(cf.~Delsarte~\cite{Delsarte1973})
or the \emph{symmetrization}
\footnote{In general, $\mathcal{S}^n(\mathfrak{X})$ is not symmetric (``symmetric'' means the condition~\ref{AS:symmetric}).
The origin of ``symmetrization'' seems to come from the fact that the Bose-Mesner algebra of $\mathcal{S}^n(\mathfrak{X})$ is the symmetric tensor subspace of the Bose-Mesner algebra of $\bigotimes^n \mathfrak{X}$.}
(cf.~Bernard et al.~\cite{bi})
of $\mathfrak{X}$ of length $n$.
Note that $\mathcal{S}^n(\mathfrak{X})$ is a fusion scheme of the $n$-times direct product $\bigotimes^n \mathfrak{X}$
of $\mathfrak{X}$.

\begin{Rem}
If $\mathfrak{X}$ comes from a finite Gelfand pair $(F, H)$,
then $\mathcal{S}^n(\mathfrak{X})$ is equivalent to a Gelfand pair $(F \wr \mathfrak{S}_n, H \wr \mathfrak{S}_n) = (F^n \ltimes \mathfrak{S}_n, H^n \ltimes \mathfrak{S}_n)$,
where $\mathfrak{S}_n$ is the symmetric group on $\{1,2,\ldots , n\}$.
In~\cite{CST2006}, $\mathcal{S}^n(\mathfrak{X})$ is called a \emph{generalized Hamming scheme}.    
In fact, when $F=\mathfrak{S}_{q}$ and $H=\mathfrak{S}_{q-1}$,
i.e, $\mathfrak{X}$ is the complete graph $K_q$ of size $q$,
$\mathcal{S}^n(\mathfrak{X})$ is the Hamming scheme $H(n,q)$.
\end{Rem}

Let $\{A_i\}^\ell_{i=0}$ and $\{E_j\}^\ell_{j=0}$ be the adjacency matrices and the primitive idempotents of $\mathfrak{X}$, respectively.
For $\alpha=(n_1,n_2,\ldots ,n_\ell)\in \mathcal{D}$,
the adjacency matrix $A_\alpha$ of $\mathcal{S}^n(\mathfrak{X})$ is
\[
A_\alpha =  \frac{1}{(n-|\alpha|)! \prod^\ell_{i=1}n_i!}\sum_{\pi\in \mathfrak{S}_n} \pi \cdot A_1^{\otimes n_1}\otimes A_2^{\otimes n_2}\otimes \cdots \otimes A_\ell^{\otimes n_\ell} \otimes A_0^{\otimes (n-|\alpha|)},
\]
where the sum is over all the place permutations,
and the prefactor ensures that each term appears only once.
Also, the primitive idempotents of $\mathcal{S}^n(\mathfrak{X})$
are indexed by $\mathcal{D}$, and we have
\[
E_\alpha =  \frac{1}{(n-|\alpha|)! \prod^\ell_{i=1}n_i!}\sum_{\pi\in \mathfrak{S}_n} \pi \cdot E_1^{\otimes n_1}\otimes E_2^{\otimes n_2}\otimes \cdots \otimes E_\ell^{\otimes n_\ell} \otimes E_0^{\otimes (n-|\alpha|)}.
\]

\begin{theo}
    \label{thm:GHS}
    $\mathcal{S}^n(\mathfrak{X})$ is an $\ell$-variate $P$-polynomial and $Q$-polynomial association scheme
    on $\mathcal{D}$ with respect to $\le_{\mathrm{grlex}}$.
\end{theo}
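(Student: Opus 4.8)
The plan is to verify the hypotheses of Proposition~\ref{prop:P-TFAE} (and Proposition~\ref{prop:Q-TFAE}) for $\mathcal{S}^n(\mathfrak{X})$ with the order $\le_{\mathrm{grlex}}$. Condition (i) of Definition~\ref{df:abPpoly} for $\mathcal{D}=\{\alpha\in\N^\ell\mid|\alpha|\le n\}$ is immediate, since $0\le m_i\le n_i$ for all $i$ gives $|\beta|\le|\alpha|\le n$. So the real content is to understand the products $A_{\epsilon_i}A_\alpha$ in $\mathcal{S}^n(\mathfrak{X})$ and to show that every adjacency matrix $A_\beta$ appearing with nonzero coefficient satisfies $\beta\le_{\mathrm{grlex}}\alpha+\epsilon_i$, together with the nonvanishing of $p^{\alpha+\epsilon_i}_{\epsilon_i,\alpha}$ when $\alpha+\epsilon_i\in\mathcal{D}$. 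The dual $Q$-polynomial statement follows by the exact same computation applied to the primitive idempotents under the Hadamard product, using the $Q$-polynomial three-term recurrences of $\mathfrak{X}$ in place of the $P$-polynomial ones; I would phrase the argument once, symmetrically, or simply remark that the $E_\alpha$ multiply under $\circ$ by the identical combinatorial rule.

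First I would set up the combinatorics of the place-permutation sums. The product $A_{\epsilon_i}A_\alpha$ records, for a fixed pair $(x,y)\in X^n$ with $\mathcal{R}(x,y)=\beta$, the number of $z\in X^n$ with $\mathcal{R}(x,z)=\epsilon_i$ and $\mathcal{R}(z,y)=\alpha$; since $\mathcal{R}(x,z)=\epsilon_i$ means $z$ differs from $x$ in exactly one coordinate $t$ (with $(x_t,z_t)\in R_i$) and agrees elsewhere, the count decomposes coordinate by coordinate into the intersection numbers $p^{c}_{i b}$ of $\mathfrak{X}$, where $b=\tau(x_t,y_t)$ and $c=\tau(z_t,y_t)$. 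Changing one coordinate relation from $b$ to $c$ changes $\beta$ to $\beta-\epsilon_b+\epsilon_c$ (with the convention $\epsilon_0=o$, i.e.\ the ``class $0$'' relation contributes nothing to the multidegree). Since $\mathfrak{X}$ is itself $P$-polynomial of class $\ell$ with ordered relations, $p^c_{ib}\neq 0$ forces, in the underlying distance-regular graph, $|b-c|\le i$ and in particular $c\le b+i$; hence $|\beta'|-|\beta|\le i=|\epsilon_i|$ for every $\beta'$ occurring, with equality exactly when the single modified coordinate passes from class $b$ to class $b+i$ with the top intersection number. Thus $|\beta'|\le|\beta|+i\le|\alpha|+i=|\alpha+\epsilon_i|$, which is the grlex-degree part of $\beta'\le_{\mathrm{grlex}}\alpha+\epsilon_i$.

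The subtle point is the tie-breaking case $|\beta'|=|\alpha+\epsilon_i|$, where one must compare $\beta'$ and $\alpha+\epsilon_i$ lexicographically, and this is the step I expect to be the main obstacle. Here one needs that the only way to reach the maximal total degree $|\alpha|+i$ is from a pair with $\mathcal{R}(x,y)$-type $\beta=\alpha$ (so that all the ``degree budget'' is already present) by raising a single coordinate from some class $b$ to $b+i$ using the top intersection number $p^{b+i}_{i,b}\neq 0$ of the $P$-polynomial scheme $\mathfrak{X}$; the resulting index is $\alpha-\epsilon_b+\epsilon_{b+i}$, and one checks directly that $\alpha-\epsilon_b+\epsilon_{b+i}\le_{\mathrm{lex}}\alpha+\epsilon_i$ for every admissible $b\ge 0$ (the case $b=0$ giving equality, i.e.\ the claimed nonzero coefficient $p^{\alpha+\epsilon_i}_{\epsilon_i,\alpha}$). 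This is a finite, elementary check once the bookkeeping above is in place, but it is where the particular shape of $\le_{\mathrm{grlex}}$ and the ordering of the relations of $\mathfrak{X}$ genuinely enter. Finally, the nonvanishing of $p^{\alpha+\epsilon_i}_{\epsilon_i,\alpha}$ when $|\alpha|<n$ (so that $\alpha+\epsilon_i\in\mathcal{D}$) comes from the contribution in which a coordinate currently in class $0$ is promoted to class $i$: the number of such promotions is $(n-|\alpha|)$ times $p^i_{i,0}=1$, which is strictly positive. Assembling these observations gives exactly condition (ii) of Proposition~\ref{prop:P-TFAE}, and the $Q$-polynomial assertion follows verbatim with $E_\alpha$, $\circ$, and Krein numbers $q^c_{ib}$ in place of $A_\alpha$, matrix product, and $p^c_{ib}$, invoking Proposition~\ref{prop:Q-TFAE}.
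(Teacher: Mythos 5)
Your overall strategy (verify condition (ii) of Proposition~\ref{prop:P-TFAE} by analysing $A_{\epsilon_i}A_\alpha$ coordinate by coordinate, then dualize via Proposition~\ref{prop:Q-TFAE}) is the same as the paper's, but the central step of your argument contains a genuine gap. First, you assume that $\mathfrak{X}$ is $P$-polynomial ``of class $\ell$ with ordered relations'' and invoke $|b-c|\le i$ for its intersection numbers. The theorem makes no such assumption: $\mathcal{S}^n(\mathfrak{X})$ is the extension of an \emph{arbitrary} commutative association scheme of class $\ell$, so this property is unavailable (and, as it turns out, unnecessary). Second, your multidegree bookkeeping is incorrect. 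In the paper's conventions $\epsilon_i$ is the $i$-th standard basis vector, so $|\epsilon_i|=1$ and $|\alpha+\epsilon_i|=|\alpha|+1$, not $|\alpha|+i$. Changing the relation of a single coordinate from class $b$ to class $c$ changes the index by $-\epsilon_b+\epsilon_c$ (with $\epsilon_0=o$) and hence changes the total weight by $\mathbf{1}_{c\neq 0}-\mathbf{1}_{b\neq 0}\in\{-1,0,1\}$, independently of which classes $b,c$ are involved. Your entire discussion of a ``tie-breaking case at top degree $|\alpha|+i$'' with candidate indices $\alpha-\epsilon_b+\epsilon_{b+i}$ rests on this false premise, so as written the key inequality $\beta\le_{\mathrm{grlex}}\alpha+\epsilon_i$ is not established.

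The correct argument is simpler and requires no hypothesis on $\mathfrak{X}$ beyond commutativity: a nonzero $p^{\beta}_{\epsilon_i,\alpha}$ forces $\beta=\alpha+\epsilon_b-\epsilon_c$ for some classes $b,c$ with the relevant intersection number of $\mathfrak{X}$ nonzero, whence $|\beta|\le|\alpha|+1$, with equality only if $c=0$; but $c=0$ forces $b=i$ (since $A_iA_0=A_i$), so the \emph{only} index of weight $|\alpha|+1$ that can occur is $\alpha+\epsilon_i$ itself, and every other index has weight at most $|\alpha|<|\alpha+\epsilon_i|$ and is therefore automatically smaller in $\le_{\mathrm{grlex}}$ --- no lexicographic comparison is ever needed. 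This is exactly what the paper's explicit expansion \eqref{eq:GHS} exhibits: the terms other than $A_{\alpha+\epsilon_i}$ are $A_\alpha$, $A_{\alpha-\epsilon_i+\epsilon_s}$, $A_{\alpha+\epsilon_i-\epsilon_s}$, $A_{\alpha-\epsilon_s+\epsilon_t}$, $A_{\alpha-\epsilon_i}$, all of weight at most $|\alpha|$. A final minor slip: the leading intersection number is $p^{\alpha+\epsilon_i}_{\epsilon_i,\alpha}=n_i+1$ (there are $n_i+1$ coordinates of the pair $(x,y)\in R_{\alpha+\epsilon_i}$ in relation $i$, each contributing $p^i_{i0}=1$), not $n-|\alpha|$; your count fixes the wrong pair of points. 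This does not affect positivity, but it is symptomatic of the same bookkeeping confusion.
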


\begin{proof}
Firstly we will show that $\mathcal{S}^n(\mathfrak{X})$ is an $\ell$-variate $P$-polynomial.
Obviously, $\mathcal{D}$ satisfies (i) of Definition~\ref{df:abPpoly}.
Fix $i\in \{1,2,\ldots ,\ell\}$ and $\alpha\in \mathcal{D}$.
We calculate $A_{\epsilon_i} A_\alpha$ as follows.
By the equations
\[
    A_{\epsilon_i} =
    \sum^n_{j=1} A_0^{\otimes (j-1)}\otimes A_i \otimes A_0^{\otimes (n-j)}
\]
and
\begin{align}
    &(A_0^{\otimes (j-1)}\otimes A_i \otimes A_0^{\otimes (n-j)})
    (A_{i_1}\otimes A_{i_2}\otimes \cdots  \otimes A_{i_n})\notag\\
    &=A_{i_1}\otimes A_{i_2}\otimes \cdots \otimes A_{i_{j-1}} \otimes (A_i A_{i_j}) \otimes A_{i_{j+1}}\otimes \cdots \otimes A_{i_n}\notag\\
    &= \sum^\ell_{m=0} p^{m}_{i,i_j} A_{i_1}\otimes A_{i_2}\otimes \cdots \otimes A_{i_{j-1}} \otimes A_m \otimes A_{i_{j+1}}\otimes \cdots \otimes A_{i_n}
    \label{eq:otimes_prod}
\end{align}
for $i_1,i_2,\ldots ,i_n \in \{0,1,\ldots ,\ell\}$,
the possible terms that appear in the expansion of $A_{\epsilon_i} A_\alpha$ with respect to $\{A_\alpha\}_{\alpha\in \mathcal{D}}$
are either
\[
A_{\alpha+\epsilon_i},\  
A_{\alpha},\  
A_{\alpha-\epsilon_i+\epsilon_s},\  
A_{\alpha+\epsilon_i-\epsilon_s},\  
A_{\alpha-\epsilon_s+\epsilon_t}\  
\text{or}\  
A_{\alpha-\epsilon_i},
\]
where $s, t \in \{1,2,\ldots , \ell\}$ and $i, s, t$ are all distinct.
Here, we determine the coefficient of $A_{\alpha+\epsilon_i}$ in $A_{\epsilon_i} A_\alpha$.
Fix the term 
\begin{equation}
    \label{eq:a+ei}
    A_0^{\otimes (n-|\alpha|-1)} \otimes A_1^{\otimes n_1}\otimes \cdots \otimes A_i^{\otimes (n_i+1)} \otimes \cdots \otimes A_\ell^{\otimes n_\ell}
\end{equation}
of $A_{\alpha+\epsilon_i}$.
Then the indices in \eqref{eq:otimes_prod} of shape \eqref{eq:a+ei} are
$i_j=0$ and $m=i$.
This leads to $p^i_{i0}=1$.
There are $n_i+1$ choices of positions where $A_0$ changes to $A_i$.
Thus, the coefficient of $A_{\alpha+\epsilon_i}$ in $A_{\epsilon_i} A_\alpha$
is $n_i+1$.
Since the other coefficients can be obtained by similar calculations, the following equation is obtained:
\begin{align}
    A_{\epsilon_i} A_\alpha
    =&
    (n_i+1)A_{\alpha+\epsilon_i}
    +\sum^\ell_{j=1}n_j p^j_{ij}A_\alpha+\sum_{s\neq i}(n_s+1)p^s_{ii}A_{\alpha -\epsilon_i +\epsilon_s} \notag\\
    &+\sum_{s\neq i}(n_i+1)p^i_{is}A_{\alpha +\epsilon_i -\epsilon_s}
    +\sum_{s,t\neq i}(n_t+1)p^t_{is}A_{\alpha -\epsilon_s +\epsilon_t}
    +(n-|\alpha|+1)p^0_{ii}A_{\alpha-\epsilon_i}. \label{eq:GHS}
\end{align}
Note that 
in the right-hand side of \eqref{eq:GHS},
the terms whose indices do not belong to $\mathcal{D}$ are
regarded as not appearing.
Thus, these indices appearing on the right-hand side of \eqref{eq:GHS}
are less than or equal to $\alpha+\epsilon_i$
with respect to $\le_\mathrm{grlex}$.
Moreover, if $\alpha+\epsilon_i\in \mathcal{D}$, 
then $p^{\alpha+\epsilon_i}_{\epsilon_i,\alpha}=n_i+1 \neq 0$.
By Proposition~\ref{prop:P-TFAE}, $\mathcal{S}^n(\mathfrak{X})$ is
an $\ell$-variate $P$-polynomial association scheme on $\mathcal{D}$
with respect to $\le_{\mathrm{grlex}}$.

For a proof that $\mathcal{S}^n(\mathfrak{X})$ is $\ell$-variate $Q$-polynomial,
it can be shown in the same way as the above proof of the $\ell$-variate $P$-polynomial property of $\mathcal{S}^n(\mathfrak{X})$.
\end{proof}

\subsection{The generalized Johnson schemes}
\label{sec:GJS}

In this subsection, we show that the generalized Johnson schemes are multivariate $P$-polynomial association schemes.
For more information on generalized Johnson schemes,
see \cite{CST2006}.

Let $(F,H)$ be a finite Gelfand pair,
$Y=F/H$ and $y_0\in Y$ the point stabilized by $H$.
Suppose that $Y = \bigsqcup^m_{i=0} \Lambda_i$ is the decomposition of $Y$
into its $H$-orbits with $\Lambda_0=\{y_0\}$.
For $0\le h \le n$,
let $\Omega_h$ be the $\mathfrak{S}_n$-homogeneous space
($\cong \mathfrak{S}_n/\mathfrak{S}_{n-h}\times \mathfrak{S}_h$)
consisting of all $h$-subsets of $\{1, 2,\ldots, n\}$.
We consider the wreath product $F \wr \mathfrak{S}_n$ of $F$ and $\mathfrak{S}_n$
(with respect to the action of $\mathfrak{S}_n$ on $\{1, 2,\ldots, n\}$)
and we construct a natural homogeneous space of $F \wr \mathfrak{S}_n$
using the actions of $F$ on $Y$ and of $\mathfrak{S}_n$ on $\Omega_h$.

Let $\Theta_h$ be the set of all functions $\theta \colon A \to Y$
whose domain is an element of $\Omega_h$
($A\in \Omega_h$)
and whose range is $Y$.
In other words $\Theta_h=\bigsqcup_{A\in \Omega_h} Y^A$.
If $\theta\in \Theta_h$ and $\theta \colon A \to Y$
then we will write $\dom \theta = A$.
The group $F \wr \mathfrak{S}_n$ acts on $\Theta_h$ in a natural way:
if $(f,\pi)\in F \wr \mathfrak{S}_n$ and $\theta\in \Theta_h$ then $(f,\pi)\theta$ is the function,
with domain $\pi \dom \theta$,
defined by setting
\[
    \left[(f,\pi)\theta\right](j)=f(j)\theta(\pi^{-1} j)
\]
for every $j\in \pi \dom \theta$.
It is clear that this action is transitive.

If $\bar{A}$ is the element in $\Omega_h$
stabilized by $\mathfrak{S}_{n-h}\times \mathfrak{S}_h$, and
we define $\theta_0\in Y^{\bar{A}}\subseteq \Theta_h$
by setting $\theta_0(j) = y_0$ for every $j \in \bar{A}$,
then is easy to check that the stabilizer of $\theta_0$ is equal to
$(H \wr \mathfrak{S}_h) \times (F \wr \mathfrak{S}_{n-h})$;
therefore we can write
$\Theta_h=(F \wr \mathfrak{S}_n)/[(H \wr \mathfrak{S}_h) \times (F \wr \mathfrak{S}_{n-h})]$.
An element $\tilde{\alpha}=(a_0,a_1,\ldots,a_m)\in \N^{m+1}$
is called a \emph{weak $(m + 1)$-composition} of $h$ if $\tilde{\alpha}$ satisfies $|\tilde{\alpha}|=h$.
In what follows, the set of all weak $(m + 1)$-compositions of $h$ will be denoted by $C(h,m + 1)$,
and it is obviously that $|C(h,m+ 1)|=\binom{m+h}{m}$.

\begin{dfe}[Change $t \to h-t$ of Definition~6.3 in \cite{CST2006}]
    For $\theta\in \Theta_h$ we define the \emph{type} of $\theta$ as the sequence of nonnegative integers
    $\type(\theta ) = (t, a_0,a_1,\ldots,a_m)$
     where $t =h-|\dom \theta \cap \bar{A}|$ and
     $a_i =|\{j \in \dom \theta \cap \bar{A} \mid  \theta(j) \in \Lambda_i\}|$,
     $i = 0, 1,\ldots,m$.
\end{dfe}

By~\cite{CST2006}, $(F \wr \mathfrak{S}_{n},(H \wr \mathfrak{S}_h) \times (F \wr \mathfrak{S}_{n-h}))$
is a Gelfand pair, and it is shown that
the orbits of $(H \wr \mathfrak{S}_h) \times (F \wr \mathfrak{S}_{n-h})$ on $\Theta_h$
are parameterized by the set
\begin{equation}
\label{eq:t_alpha}    
\{(t,\tilde{\alpha})\mid  0\le t \le \min\{h,n-h\},\   \tilde{\alpha}\in C(h-t,m+1)\}.
\end{equation}

A map $\mathcal{R}$ from $\Theta_h\times \Theta_h$
to \eqref{eq:t_alpha} is defined by
$\mathcal{R}(\theta_1,\theta_2)=(t,\tilde{\alpha})$
if
$|\dom \theta_1 \cap \dom \theta_2|=h-t$ and
\[
    |\{a\in\dom \theta_1 \cap \dom \theta_2 \mid (\theta_1(a),\theta_2(a))\in \widetilde{\Lambda_i}\}|=a_i,
\]
for $i=0,1,\ldots ,m$.
Thus, $\mathfrak{X}=(\Theta_h,\mathcal{R})$ is a commutative association scheme.
Let
\[
    \mathcal{D}:=\{(t,a_1,a_2, \ldots , a_m)\in \N^{m+1} \mid 0\le t \le \min\{h,n-h\},\ a_i \ge 0,\  \sum^m_{i=1}a_i \le h-t\}.
\]
For $\tilde{\alpha}=(a_0,a_1, \ldots , a_m)$,
we put $\alpha=(a_1,a_2, \ldots , a_m)$.
One can check easily that there is a one-to-one correspondence between \eqref{eq:t_alpha} and $\mathcal{D}$.
Hence, we also treat $\mathcal{R}$ as a map from $\Theta_h\times \Theta_h$ to $\mathcal{D}$
by $\mathcal{R}(\theta_1,\theta_2)=(t,\alpha)$.

\begin{Rem}
When the case of $F=H$, i.e., $Y$ is a singleton,
then $\mathfrak{X}$ coincides with the Johnson scheme $J(n,h)$.
On the other hand,
when the case of $F=\mathfrak{S}_{q}$
and $H=\mathfrak{S}_{q-1}$, i.e., $Y$ is the complete graph $K_{q}$, then
$\mathfrak{X}$ coincides with the nonbinary Johnson scheme
$J_q(n,h)$.
For more information on nonbinary Johnson schemes,
see Tarnanen-Aaltonen-Goethals~\cite{TAG1985}.
\end{Rem}

\begin{theo}
\label{thm:GJS}
$\mathfrak{X}=(\Theta_h,\mathcal{R})$
is an $(m+1)$-variate $P$-polynomial association scheme
on $\mathcal{D}$
with respect to $\le_\mathrm{grlex}$.
\end{theo}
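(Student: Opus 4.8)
The plan is to apply Proposition~\ref{prop:P-TFAE}, exactly as in the proofs of Theorem~\ref{thm:attenuated} and Theorem~\ref{thm:GHS}. Thus it suffices to verify two things: that $\mathcal{D}$ satisfies condition (i) of Definition~\ref{df:abPpoly}, and that for each $i\in\{0,1,\ldots,m\}$ (using the convention that the $0$-th coordinate is $t$ and the coordinates $1,\ldots,m$ are $a_1,\ldots,a_m$) and each $(t,\alpha)\in\mathcal{D}$, the intersection number $p^{(t',\beta)}_{\epsilon_i,(t,\alpha)}$ is nonzero only for $(t',\beta)\le_{\mathrm{grlex}}(t,\alpha)+\epsilon_i$, with the extra condition that $p^{(t,\alpha)+\epsilon_i}_{\epsilon_i,(t,\alpha)}\neq 0$ whenever $(t,\alpha)+\epsilon_i\in\mathcal{D}$. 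The first point is routine: if $0\le t'\le t$ and $0\le a_i'\le a_i$, then $t'\le\min\{h,n-h\}$ and $\sum a_i'\le\sum a_i\le h-t\le h-t'$, so $(t',\alpha')\in\mathcal{D}$.

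For the second point, I would first identify the ``generators'' $\epsilon_i$ with the relations of minimal weight: $\epsilon_0$ corresponds to the relation $(1,0,\ldots,0)$ (a single domain-point mismatch), and $\epsilon_i$ for $1\le i\le m$ corresponds to $(0,\ldots,0,1,0,\ldots,0)$ (a single coordinate lying in the $H$-orbit $\Lambda_i$). The computation of $A_{\epsilon_0}A_{(t,\alpha)}$ and $A_{\epsilon_i}A_{(t,\alpha)}$ is a combinatorial counting argument on $\Theta_h$: given $\theta_1,\theta_2$ with $\mathcal{R}(\theta_1,\theta_2)=(t,\alpha)$, one counts the $\theta$ with $\mathcal{R}(\theta_1,\theta)=\epsilon_i$ and $\mathcal{R}(\theta,\theta_2)=(t',\beta)$. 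Multiplying by $A_{\epsilon_i}$ changes the configuration by a bounded local move — either altering which index is in the common domain $\dom\theta_1\cap\dom\theta_2$ (affecting $t$ by $\pm1$ and possibly one $a_j$), or changing the $\Lambda$-class of a single common-domain value (using the three-term recurrence of the underlying $Q$-polynomial... rather $P$-polynomial scheme $\mathfrak{X}=(Y,\{R_i\})$, which shifts one $a_j$ to an adjacent $a_{j\pm1}$, or keeps it). I expect the possible output indices to be of the form
\[
(t,\alpha)+\epsilon_i,\quad (t,\alpha),\quad (t,\alpha)-\epsilon_i,\quad (t,\alpha)-\epsilon_i+\epsilon_s,\quad (t,\alpha)+\epsilon_i-\epsilon_s,\quad (t,\alpha)-\epsilon_s+\epsilon_t
\]
and possibly $(t,\alpha)\pm\epsilon_0$ combined with a shift in some $a_j$, entirely analogous to equation~\eqref{eq:GHS}. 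Each such index has weight at most $|(t,\alpha)|+1$, and the weight-$(|(t,\alpha)|+1)$ case is exactly $(t,\alpha)+\epsilon_i$; hence all output indices are $\le_{\mathrm{grlex}}(t,\alpha)+\epsilon_i$. For the nonvanishing of the top coefficient, I would check directly that when $(t,\alpha)+\epsilon_0\in\mathcal{D}$ the coefficient of $A_{(t,\alpha)+\epsilon_0}$ counts choices of a newly-matched index and equals $(n-h-t+1)$ or similar (nonzero), and when $(t,\alpha)+\epsilon_i\in\mathcal{D}$ for $i\ge 1$ the coefficient involves $(a_i+1)$ times the nonzero intersection number $p^i_{1,i-1}$ (or $p^i_{i-1,1}$) of the three-term recurrence for $\mathfrak{X}=(Y,\mathcal{R})$, hence nonzero; this is where $P$-polynomiality of $(F,H)$... — wait, here $\mathfrak{X}$ is the ambient scheme on $Y$, which we are implicitly assuming is $P$-polynomial (as in the nonbinary Johnson case $Y=K_q$) or, in the general Gelfand-pair formulation, we only need that shifting an $a_j$ by a generator move produces a $\le_{\mathrm{grlex}}$-bounded index, which it does since single coordinate changes never increase total weight by more than one.

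The main obstacle will be the bookkeeping in the counting argument: unlike the generalized Hamming scheme, here the domain $\dom\theta$ itself varies, so the action of $A_{\epsilon_0}$ mixes the ``$t$-direction'' (set-symmetric-difference, Johnson-scheme-like behavior) with the ``$a$-direction'' (values moving in or out of the common domain, which redistributes mass among the $a_i$ and the hidden coordinate $a_0=h-t-\sum a_i$). One must carefully enumerate which triples of indices can appear and confirm that none of the resulting index shifts pushes the graded-lex degree above $|(t,\alpha)|+1$ — in particular that a move removing an index from the common domain ($t\mapsto t+1$) simultaneously decreases some $a_j$, so the weight goes up by exactly $1$, landing at $(t,\alpha)+\epsilon_0$, never higher. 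Once the list of admissible output indices is pinned down, the $\le_{\mathrm{grlex}}$ comparison and the nonvanishing of the leading coefficient are immediate, and Proposition~\ref{prop:P-TFAE} finishes the proof.
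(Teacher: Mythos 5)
Your overall strategy --- verify condition (i) of Definition~\ref{df:abPpoly} for $\mathcal{D}$, then bound the intersection numbers $p^{(u,\beta)}_{\epsilon_i,(t,\alpha)}$ and check nonvanishing of the leading one so that Proposition~\ref{prop:P-TFAE} applies --- is exactly the paper's route, and your verification of condition (i) and your identification of the generators $(1,\zerovec)$ and $(0,\epsilon_i)$ are correct. The gap is in the one concrete claim you make about the enumeration: that the situation is ``entirely analogous to equation~\eqref{eq:GHS}'' and that ``the weight-$(|(t,\alpha)|+1)$ case is exactly $(t,\alpha)+\epsilon_i$.'' This is false for the generator $(1,\zerovec)$. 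In the case where $|\dom\theta_1\cap\dom\theta_2|=h-t$ is unchanged but the modified position lies in the common domain, the product $A_{(1,\zerovec)}A_{(t,\alpha)}$ contains the term $A_{(t,\alpha+\epsilon_s)}$ with coefficient $p^{(t,\alpha+\epsilon_s)}_{(1,\zerovec),(t,\alpha)}=(a_s+1)t$, nonzero whenever $t\ge 1$. The index $(t,\alpha+\epsilon_s)$ has the same total weight $t+|\alpha|+1$ as the target $(t+1,\alpha)$, so a pure weight count cannot conclude; you need the lexicographic tiebreak inside $\le_{\mathrm{grlex}}$, which works only because the $t$-coordinate is placed first in $\mathcal{D}\subset\N^{m+1}$: the leftmost nonzero entry of $(t,\alpha+\epsilon_s)-(t+1,\alpha)$ is $-1$, hence $(t,\alpha+\epsilon_s)\le_{\mathrm{grlex}}(t+1,\alpha)$. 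Without this, the argument as written does not establish condition (ii) of Proposition~\ref{prop:P-TFAE} for the generator $(1,\zerovec)$; this is precisely where the Johnson-type structure differs from the generalized Hamming scheme of Theorem~\ref{thm:GHS}, where no such term arises.

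A second, smaller point: no $P$-polynomiality of the base scheme on $Y$ is assumed or needed, and a ``three-term recurrence'' shifting $a_j$ to $a_{j\pm 1}$ plays no role. The relevant facts are only that a single common-domain position can move from class $\Lambda_k$ to class $\Lambda_j$ whenever $p^j_{ik}\neq 0$ on $Y$ (which preserves or lowers the weight unless $k=0$), and that $p^j_{i0}=\delta_{ij}$, which forces the unique weight-raising move for the generator $(0,\epsilon_i)$ to land on $(t,\alpha+\epsilon_i)$ and yields the leading coefficient $a_i+1\neq 0$. Your later self-correction points in the right direction, but the enumeration of cases (organized by $u=t-1,t,t+1$, using that the domains $\dom\theta$ form a Johnson scheme $J(n,h)$) still has to be carried out to pin down the full list of output indices before the $\le_{\mathrm{grlex}}$ comparison can be made.
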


\begin{proof}
One can easily check that $\mathcal{D}$
satisfies (i) of Definition~\ref{df:abPpoly}.

Fix $(t,\alpha)=(t,a_1,a_2,\ldots ,a_m)\in \mathcal{D}$.
Let $A_{(t,\alpha)}$ be the adjacency matrix
of $R_{(t,\alpha)}$.
Firstly, we determine non-zero coefficients $p^{(u,\beta)}_{(1,\zerovec)(t,\alpha)}$ of 
\[A_{(1,\zerovec)} A_{(t,\alpha)}=\sum_{(u,\beta)\in \mathcal{D}} p^{(u,\beta)}_{(1,\zerovec)(t,\alpha)} A_{(u,\beta)}.
\]
Assume $p^{(u,\beta)}_{(1,\zerovec)(t,\alpha)} \neq 0$.
Take $\theta_1,\theta_2,\theta_3\in \Theta_h$
satisfying $(\theta_1,\theta_2)\in R_{(u,\beta)}$,
$(\theta_1,\theta_3)\in R_{(1,\zerovec)}$
and $(\theta_3,\theta_2)\in R_{(t,\alpha)}$.
Then $|\dom \theta_1 \cap \dom \theta_2|=h-u$,
$|\dom \theta_1 \cap \dom \theta_3|=h-1$
and $|\dom \theta_3 \cap \dom \theta_2|=h-t$
hold.
Then $\dom \theta_1,\dom \theta_2,\dom \theta_3$
are regarded as elements of the Johnson scheme $J(n,h)$.
Since $J(n,h)$
is $P$-polynomial,
$|t-u|\le 1$ holds,
i.e., $u=t-1,t,t+1$ holds.

(1a) The case $u=t-1$.
By the relation of $\dom \theta_1,\dom \theta_2,\dom \theta_3$,
the domain of $\theta_3$ must be 
\[
    \dom \theta_3 = (\dom \theta_1\setminus\{a\}) \cup \{b\}, 
\]
where $a\in \dom \theta_1 \cap \dom \theta_2$
and $b\in \{1,2,\ldots ,n\}\setminus (\dom \theta_1 \cup \dom \theta_2)$.
Since $(\theta_1,\theta_3)\in R_{(1,\zerovec)}$,
we have $\theta_1(c)=\theta_3(c)$
for each $c\in \dom \theta_1\setminus\{a\}$.
Thus,
$(\theta_3(c),\theta_2(c))$ and $(\theta_1(c),\theta_2(c))$
are in the same relation on $Y$ 
for each $c\in (\dom \theta_1\cap \dom \theta_2)\setminus\{a\}
=\dom \theta_3 \cap \dom \theta_2$.

If $\theta_1(a)=\theta_2(a)$, i.e., $(\theta_1(a),\theta_2(a))\in \widetilde{\Lambda_0}$ on $Y$,
then 
\begin{align*}
    &|\{c\in \dom \theta_1 \cap \dom \theta_2 \mid (\theta_1(c),\theta_2(c))\in \widetilde{\Lambda_0}\}|=
    \left((h-t)-|\alpha|\right)+1\ 
    \text{and}\\
    &|\{c\in \dom \theta_1 \cap \dom \theta_2 \mid (\theta_1(c),\theta_2(c))\in \widetilde{\Lambda_i}\}|=
    a_i\  \text{for $i=1,2,\ldots ,m$.}
\end{align*}
This implies $\mathcal{R}(\theta_1,\theta_2)=(t-1,\alpha)$.

If $(\theta_1(a),\theta_2(a))\in \widetilde{\Lambda_i}$ on $Y$,
for each $i=1,2,\ldots ,m$,
then 
\begin{align*}
    &|\{c\in \dom \theta_1 \cap \dom \theta_2 \mid (\theta_1(c),\theta_2(c))\in \widetilde{\Lambda_0}\}|=
    (h-t)-|\alpha|,\\ 
    &|\{c\in \dom \theta_1 \cap \dom \theta_2 \mid (\theta_1(c),\theta_2(c))\in \widetilde{\Lambda_i}\}|=
    a_i+1\ 
    \text{and}\\ 
    &|\{c\in \dom \theta_1 \cap \dom \theta_2 \mid (\theta_1(c),\theta_2(c))\in \widetilde{\Lambda_j}\}|=
    a_j\  \text{for $j\in \{1,2,\ldots ,m\}\setminus \{i\}$.}
\end{align*}
This implies 
$\mathcal{R}(\theta_1,\theta_2)=(t-1,\alpha+\epsilon_i)$.
Hence, if $p^{(t-1,\beta)}_{(1,\zerovec),(t,\alpha)}\neq 0$, then $\beta=\alpha$ or $\alpha+\epsilon_i$.
One can check the exact values of the intersection numbers as follows:
\[
p^{(t-1,\alpha)}_{(1,\zerovec),(t,\alpha)}
=|Y|(n-h-t+1)(h-t+1-|\alpha|)
\ \text{and}\  
p^{(t-1,\alpha+\epsilon_i)}_{(1,\zerovec),(t,\alpha)}
=|Y|(n-h-t+1)(a_i+1).
\]

(1b) The case $u=t$.
By the relation of $\dom \theta_1,\dom \theta_2,\dom \theta_3$,
the domain of $\theta_3$ must be 
\[
    \dom \theta_3 = (\dom \theta_1\setminus\{a\}) \cup \{b\}, 
\]
where
\begin{equation}
    \label{eq:u=t1}
    \text{$a\in \dom \theta_1 \setminus \dom \theta_2$
    and $b\in \{1,2,\ldots ,n\}\setminus (\dom \theta_1 \cup \dom \theta_2)$}        
\end{equation}
or
\begin{equation}
    \label{eq:u=t2}
    \text{$a\in \dom \theta_1 \cap \dom \theta_2$
    and $b\in \dom \theta_2 \setminus \dom \theta_1$}.        
\end{equation}
With a similar argument as the case $u=t-1$,
we get that
$\beta=\alpha$ for \eqref{eq:u=t1}
and
$\beta=\alpha,
\alpha \pm \epsilon_i,
\alpha+ \epsilon_i-\epsilon_j,
$ ($i,j=1,2,\ldots ,m$)
for \eqref{eq:u=t2}.
One can check the exact values of the intersection numbers as follows:
\[
p^{(t,\alpha)}_{(1,\zerovec),(t,\alpha)}
=|Y|(n-h-t)t+(h-t-|\alpha|)t+\sum^m_{j=1}a_j t k_j,\  
p^{(t,\alpha-\epsilon_i)}_{(1,\zerovec),(t,\alpha)}
=(h-t+1-|\alpha|)t k_i,
\]
\[
p^{(t,\alpha+\epsilon_i)}_{(1,\zerovec),(t,\alpha)}
=(a_i+1) t\ 
\text{and}\  
p^{(t,\alpha+\epsilon_i-\epsilon_j)}_{(1,\zerovec),(t,\alpha)}
=(a_i+1) t k_j,        
\]
where $k_i$ is the valency of $\widetilde{\Lambda_i}$ of $Y$.

(1c) The case $u=t+1$.
By the relation of $\dom \theta_1,\dom \theta_2,\dom \theta_3$,
the domain of $\theta_3$ must be 
\[
    \dom \theta_3 = (\dom \theta_1\setminus\{a\}) \cup \{b\}, 
\]
where
$a\in \dom \theta_1 \setminus \dom \theta_2$
and $b\in \dom \theta_2 \setminus \dom \theta_1$.
With a similar argument as the case $u=t-1$,
we get that
$\beta=\alpha, \alpha-\epsilon_i$.
One can check the exact values of the intersection numbers as follows:
\[
p^{(t+1,\alpha)}_{(1,\zerovec),(t,\alpha)}
=(t+1)^2 \ \text{and}\  
p^{(t+1,\alpha-\epsilon_i)}_{(1,\zerovec),(t,\alpha)}
=(t+1)^2 k_i.
\]
By (1a)--(1c),
we have
\begin{align}
    A_{(1,\zerovec)} A_{(t,\alpha)}
    =&
    p^{(t+1,\alpha)}_{(1,\zerovec),(t,\alpha)}A_{(t+1,\alpha)}+
    \sum^m_{i=1}p^{(t+1,\alpha-\epsilon_i)}_{(1,\zerovec),(t,\alpha)}A_{(t+1,\alpha-\epsilon_i)}\notag\\
    &+p^{(t,\alpha)}_{(1,\zerovec),(t,\alpha)}A_{(t,\alpha)}+
    \sum^m_{i=1}p^{(t,\alpha-\epsilon_i)}_{(1,\zerovec),(t,\alpha)}A_{(t,\alpha-\epsilon_i)}\notag\\
    &+\sum^m_{i=1}p^{(t,\alpha+\epsilon_i)}_{(1,\zerovec),(t,\alpha)}A_{(t,\alpha+\epsilon_i)}+
    \sum_{1\le i,j\le m, i\neq j}p^{(t,\alpha-\epsilon_i+\epsilon_j)}_{(1,\zerovec),(t,\alpha)}A_{(t,\alpha-\epsilon_i+\epsilon_j)}\notag\\
    &+p^{(t-1,\alpha)}_{(1,\zerovec),(t,\alpha)}A_{(t-1,\alpha)}+\sum^m_{i=1}p^{(t-1,\alpha+\epsilon_i)}_{(1,\zerovec),(t,\alpha)}A_{(t-1,\alpha+\epsilon_i)}.
    \label{eq:(10)(tb)}
\end{align}

Fix $i=1,2,\ldots, m$.
Next, we determine non-zero coefficients $p^{(u,\beta)}_{(0,\epsilon_i)(t,\alpha)}$ of 
\[A_{(0,\epsilon_i)} A_{(t,\alpha)}=\sum_{(u,\beta)\in \mathcal{D}} p^{(u,\beta)}_{(0,\epsilon_i)(t,\alpha)} A_{(u,\beta)}.
\]
With a similar argument as $A_{(1,\zerovec)} A_{(t,\alpha)}$,
we get that
\begin{align}
    A_{(0,\epsilon_i)} A_{(t,\alpha)}
    =&
    p^{(t,\alpha+\epsilon_i)}_{(0,\epsilon_i),(t,\alpha)}A_{(t,\alpha+\epsilon_i)}
    +
    p^{(t,\alpha)}_{(0,\epsilon_i),(t,\alpha)}A_{(t,\alpha)} \notag\\
    &+
    \sum_{1\le j,k\le m, j\neq k}
    p^{(t,\alpha+\epsilon_j-\epsilon_k)}_{(0,\epsilon_i),(t,\alpha)}A_{(t,\alpha+\epsilon_j-\epsilon_k)}
    +p^{(t,\alpha-\epsilon_i)}_{(0,\epsilon_i),(t,\alpha)}A_{(t,\alpha-\epsilon_i)},
    \label{eq:(01)(tb)}
\end{align}
where
\begin{align*}
    p^{(t,\alpha+\epsilon_i)}_{(0,\epsilon_i),(t,\alpha)}&=(a_i+1),\ 
    p^{(t,\alpha)}_{(0,\epsilon_i),(t,\alpha)}=
    (|Y|-1)t+\sum^m_{j=1}a_j(k_j-1),\\
    p^{(t,\alpha+\epsilon_j-\epsilon_k)}_{(0,\epsilon_i),(t,\alpha)}&=
    (a_k+1)k_j\   \text{and}\    
    p^{(t,\alpha-\epsilon_i)}_{(0,\epsilon_i),(t,\alpha)}=
    (h-t+1-|\alpha|)k_i.
\end{align*}
Note that 
in the right-hand sides of \eqref{eq:(10)(tb)} and \eqref{eq:(01)(tb)},
the terms whose indices do not belong to $\mathcal{D}$ are
regarded as not appearing.
Thus, these indices appearing in the right-hand side of \eqref{eq:(10)(tb)} and \eqref{eq:(01)(tb)} 
are less than or equal to $(1,\zerovec)+(t,\alpha)=(t+1,\alpha)$ and $(0,\epsilon_i)+(t,\alpha)=(t,\alpha+\epsilon_i)$
with respect to $\le_\mathrm{grlex}$, respectively.
Moreover, if $(t+1,\alpha)\in \mathcal{D}$, 
then $p^{(t+1,\alpha)}_{(1,\zerovec), (t,\alpha)}=(t+1)^2 \neq 0$
and if $(t,\alpha+\epsilon_i)\in \mathcal{D}$, 
then $p^{(t,\alpha+\epsilon_i)}_{(0,\epsilon_i), (t,\alpha)}= a_i +1 \neq 0$.
Therefore, by Proposition~\ref{prop:P-TFAE}, $\mathfrak{X}$ is
an $(m+1)$-variate $P$-polynomial association scheme on $\mathcal{D}$
with respect to $\le_\mathrm{grlex}$.
\end{proof}

\begin{Rem}
    The formula of spherical functions of the generalized Johnson schemes
    are given in \cite{CST2006}.
    From the formula,
    it can be expected that the generalized Johnson schemes are $(m+1)$-variable $Q$-polynomial association schemes.
    However, at present there is no proof of this.
\end{Rem}

\section{Further comments}
\label{sec:Furthercomments}

(i) In this paper, we did not discuss the multivariate $Q$-polynomial 
property for many of the association schemes that proved to be bivariate (or multivariate) $P$-polynomial association schemes. 
In fact, we believe that most of them satisfy this property. 
On the other hand, we have not been completely successful in proving this. So, we would like to leave this question to a subsequent paper.

What we want to prove is the following. For most of the examples of multivariate $P$-polynomial association schemes, such as, 
nonbinary Johnson schemes, association schemes obtained from attenuated space, association schemes obtained from $m$-dimensional isotropic subspaces, generalized 
Johnson schemes, etc., what we need to show is that there exist appropriate polynomials $v_{rs}^\ast(x,y)$ such that 
the following assertions hold. Let $\theta_{rs}^\ast$ be 
the dual eigenvalues of $|X|E_{rs}.$ Then $|X|E_{rs}=
v_{rs}^\ast(|X|E_{10},|X|E_{01})$ (where the multiplication 
is the circle product), namely 
$\theta_{rs}^\ast=v_{rs}^\ast(\theta_{10}^\ast,\theta_{01}^\ast).$ 
Since all the values of $\theta_{rs}^\ast$ are known very explicitly, 
say see formula (4-2) in Theorem 4.2 in Dunkl~\cite{Dunkl1976}
or formula (38) in Theorem 2 in \cite{TAG1985}
for the nonbinary Johnson association scheme, 
it should be possible in principle to obtain the claim, although currently we 
have difficulty in completing this proof rigorously. 

\bigskip

\noindent
(ii) Iliev-Terwilliger~\cite{IT2012} consider some multivariate $P$-polynomial 
(and/or $Q$-polynomial) association schemes from the viewpoint 
of root systems, in particular of type $A_n$ and possibly for 
other types. These are very special classes of more general multivariate 
$P$-polynomial (and/or $Q$-polynomial) association schemes 
we have considered. We expect many of our examples can be 
regarded as falling into this special class, and we hope to discuss 
more from this viewpoint in a subsequent paper. 

\bigskip

\noindent
(iii) There are known many multivariate orthogonal 
polynomials that are generalizations of $q$-Racah (Askey-Wilson) 
polynomials, as well as Hahn or dual Hahn polynomials, etc.  
For example, Scarabotti~\cite{Scarabotti2011} (as well as many authors) considers 
such a generalization. 
It seems that those considered by Gasper-Rahman~\cite{GR2007}, for instance, are nothing but special cases considered 
in Scarabotti~\cite{Scarabotti2011}. 
It would be interesting to see which of such multivariate orthogonal polynomials actually have association schemes in the underlying structure, or weaker linear algebraic structures such as character algebras, table algebras, hypergroups, etc.

\vspace{2ex}

\noindent
{\bf{Acknowledgments}}

\vspace{2ex}

\noindent
The first author thanks Luc Vinet for sending him their 
paper \cite{bi} before publication. Our present paper 
was deeply motivated by the impact of \cite{bi}. 
Also, we sincerely thank Nicolas Cramp\'{e}, Pierre-Antoine Bernard and Luc Vinet
for spotting some errors in the earlier versions of this paper. 
The first author thanks Tullio Ceccherini-Silberstein, 
Fabio Scarabotti and Filippo Tolli, for their invitation to visit 
Rome for collaborations, in particular, for telling 
him the contents of their paper \cite{CST2006} that was 
found to be very much relevant 
to the topic, as is seen in the present paper. 
The main part of the 
present work progressed while the first author was 
visiting Rome in February 2023.
The research of the second author is supported by JSPS, Japan KAKENHI Grant Number JP20K03623.
The third author thanks Satoshi Tsujimoto for informing the work by \cite{bi}.
The third author also thanks Kyoto University and Shanghai Jiao Tong University.
The research of the fourth author is supported by National Natural Science Foundation of China No.~11801353.

\bibliographystyle{abbrv}
\bibliography{ASbib}

\end{document}